\newcommand{\bpf}[1][Proof]{{\noindent {\sc #1: }}}
\newcommand{\epf}{{\hfill $\square$}\vspace{.5cm}}
\newtheorem{theorem}{Theorem}[section]
\newtheorem{lemma}[theorem]{Lemma}
\newtheorem{corollary}[theorem]{Corollary}
\newtheorem{proposition}[theorem]{Proposition}
\newtheorem{remark}[theorem]{Remark}
\newcommand{\F}        {{ \mathcal F }}
\newcommand{\X}        {{ \mathbb X }}
\newcommand{\G}        {{ \mathcal G }}
\newcommand{\W}        {{ \mathcal W }}
\newcommand{\A}        {{ \mathcal A }}
\newcommand{\Ss}        {{ \mathcal S }}
\newcommand{\R}        {{{\rm I\! R}}}
\newcommand{\E}        {{{\rm I\! E}}}
\renewcommand{\P}        {{ {\rm I \! P}}}
\newcommand{\eps}    {\varepsilon}
\renewcommand{\c}  {\overline{c}}
\def \e    {\varepsilon}
\def \g  {\gamma}
\begin{document}

\title{Homogenization of a singular random one dimensional parabolic PDE
with time varying coefficients}

\author{\'Etienne Pardoux, Andrey Piatnitski}


\maketitle

{\it Keywords}: \ Stochastic homogenization, random operator, large potential

\bigskip
{\it MSC}: \ 80M40, 60H25, 74Q10.

\begin{abstract}
The paper studies homogenization problem for a non-autonomous parabolic equation with a large random rapidly oscillating potential in the case of one dimensional spatial variable. We show that if the potential is a statistically homogeneous rapidly oscillating function of both temporal and spatial variables then, under proper mixing assumptions, the limit equation is deterministic and the convergence in probability holds. To the contrary, for the potential having a microstructure 
only in one of these variables, the limit problem is stochastic and we only prove
the convergence in law.

\end{abstract}

\section{Introduction}

Our goal is to study the limit, as $\eps\to0$, of the solution of the linear parabolic PDE
\begin{equation}\label{eq-eps}
\left\{
\begin{aligned}
\frac{\partial u^\eps}{\partial t}(t,x)&
=\frac{1}{2}\frac{\partial^2 u^\eps}{\partial x^2}(t,x)
+\eps^{-\gamma} c\left(\frac{t}{\eps^\alpha},\frac{x}{\eps^\beta}\right)u^\eps(t,x),
\quad t\ge0,\, x\in\R;\\
u^\eps(0,x)&=g(x),\quad x\in\R,
\end{aligned}
\right.
\end{equation}
where $g\in L^2(\R)\cap C(\R)$,
$\{c(t,x),\, t\in\R_+,\, x\in\R\}$ is a stationary random field
defined on a probability space $(\Ss,\A,P)$, such that
\begin{equation}\label{c-centered}
E \, c(t,x)=0,\quad t\in\R_+,\, x\in\R,
\end{equation}
where $E$ denotes expectation with respect to the probability measure $P$.
In all this paper, we will assume that the random field $c$ is uniformly bounded, i. e.
$$\sup_{t\ge0,\ x\in\R,\ s\in\Ss}|c(t,x,s)|<\infty.$$
 We define
the correlation function of the random field $c$ as follows~:

\begin{equation}\label{defofphi}
\Phi(t,x):=E\left[ c(s,y)c(s+t,y+x)\right].
\end{equation}

We assume that $\Phi\in L^1(\R\times\R)$. Additional mixing conditions, specific to each particular case, are formulated separately in each section.

We will consider various possible values for the parameters
$\alpha,\, \beta\ge 0$, and we will see that the correct value for
$\gamma$, such that the limit of the highly oscillating term is
non trivial (i.e. finite and non zero), is
\[
\gamma = \left(\frac{\alpha}{4}+\frac{\beta}{2}\right)\vee\frac{\alpha}{2},
\]
and that the highly oscillating term can have three types of limit. If $\alpha=0$, the result is
similar to that obtained in \cite{ifpapi}, that is the limiting PDE is a type of SPDE
driven by a noise which is white in space, and correlated in time. If $\beta=0$, the limit is an
SPDE driven by a noise which is white in time and correlated in space. We believe that in all cases where
$\alpha>0$ and $\beta>0$, the limiting PDE is deterministic. One intuitive explanation of this
result, which was first a surprise for the authors, is the following. In the case $\alpha,\, \beta>0$,
the limiting noise should be white both in time and space, i. e. the limiting PDE should be a
``bilinear'' SPDE driven a space--time white noise. But we know that the corresponding stochastic
integral should be interpreted as a Stratonovich integral, i. e. an It\^o integral plus a correction
term. However, in the space--time white noise case, the correction term is infinite. Hence the correct
choice of $\gamma$ forces the It\^o integral term to vanish, which is necessary for the
``It\^o--Stratonovich correction term''  not to explode.

This result is consistent with that in \cite{GB}

In fact, within the case $\alpha,\ \beta>0$, we have only been able to treat the case where
$0<\beta\le\alpha/2$. The case $0<\alpha<2\beta$ remains open. Our methods do not seem
to cover this last case.

Two variants of the same problem, but with coefficients not depending upon time $t$, have already been considered in \cite{papi_narvik} and in \cite{ifpapi}. The case of random coefficients which are periodic in space was considered in \cite{dioifpapi}.

The paper is organized as follows. In section 2 we state the Feynman--Kac formula for the solution
$u^\eps$ of equation \eqref{eq-eps}. In section 3 we treat the case $\alpha=0$, $\beta>0$. In section 4 we treat the case $0\le 2\beta\le\alpha$, starting with the case $\beta>0$, and finally ending with the case $\beta=0$, $\alpha>0$.

\section{The Feynman--Kac formula}

Let $\{B_t;\, t\ge0\}$ denote a standard Brownian motion
defined on the probability space
$(\Omega,\F,\P)$. The pair $$(\{c(t,x),\ t\ge0, x\in\R\},\{B_t;\, t\ge0\})$$
is defined on the product probability space $(\Omega\times\Ss,\F\otimes\A,\P\times P)$,
so that $\{c(t,x),\ t\ge0, x\in\R\}$ and $\{B_t;\, t\ge0\}$ are mutually independent.

The solution of equation \eqref{eq-eps} is given by the formula
\begin{equation}\label{fk}
\begin{split}
u^\eps(t,x)&=\E\left[g(x+B_t)\exp\left(\eps^{-\gamma}\int_0^t
c\left(\frac{s}{\eps^\alpha},\frac{x+B_s}{\eps^\beta}\right)ds\right)\right]\\
&=\E\left[g(x+B_t)\exp\left(\eps^{-\gamma}\int_0^t\int_\R
c\left(\frac{s}{\eps^\alpha},\frac{x+y}{\eps^\beta}\right)L(ds,y) dy\right)\right],
\end{split}
\end{equation}
where $L(t,x)$ denotes the local time at time $t$ and at level $x$ of the process $B$,
and $\E$ denotes expectation with respect to $\P$. We shall use the notation $X^x_t=x+B_t$.
Note that since $g\in L^2(\R)$ and the density of the law of $X^x_t$ is bounded by $(2\pi t)^{-1/2}$, $g(X_t^x)$ is square integrable.

\section{A criterion for convergence in law}
In the cases where the limit is deterministic, convergence in law is equivalent to convergence in probability. In fact in those cases we will establish convergence in
$L^2(P)$. However, in the case where the limit is random, we are faced with  true
convergence in law. The quantity which should converge in law is a ``partial expectation'', or in other words a conditional expectation. Taking the limit in law of such a quantity does not seem to be very common. In this section, we establish a criterion for convergence in law which is specially tailored for our needs.

\begin{proposition}\label{p_lawconcr}
Let $\{Z^\eps,\,\eps>0\}$ be a collection of real-valued  random variables, and suppose that there exist a random variable $Z$ and, for each $M>0$, random variables $Z_M^\eps$ and $Z_M$ such that
\begin{itemize}
\item[(i)] For any $M$ the sequence $Z_M^\eps$ converges to
$Z_M$ in law, as $\eps\to 0$;
\item[(ii)] It holds
$$
|Z^\eps-Z^\eps_M|\le \frac{\chi^\eps}{M},\qquad |Z-Z_M|\le \frac{\chi^0}{M},
$$
where the family of r. v.'s $\{\chi^\eps,\,\eps\ge0\}$ is tight.
\end{itemize}
Then $Z^\eps$ converges to
$Z$ in law, as $\eps\to 0$.
\end{proposition}
\begin{proof} Since $\{Z^\eps,\ \eps>0\}$ is tight,
it suffices to show that for all $\varphi\in C(\R)$ with $|\varphi(x)|\le 1$ for all $x\in\R$,  and $\varphi$ globally Lipschitz,
$$
E\varphi(Z^\eps)\to E\varphi(Z) \quad\hbox{as }\eps\to0.
$$
Note that
$$
E\varphi(Z^\eps)-E\varphi(Z)=E[\varphi(Z)-\varphi(Z_M)]+
E[\varphi(Z_M^\eps)-\varphi(Z^\eps)]+E\varphi(Z_M) -E\varphi(Z^\eps_M).
$$
If $K$ stands for the Lipschitz constant of $\varphi$, then
$$
\left|E\left\{\varphi(Z)-\varphi(Z_M)+\varphi(Z^\eps_M)- \varphi(Z^\eps)\right\}\right|\le E\inf\left(4,\frac{K}{M}(\chi^\eps+\chi^0)\right).
$$
Consequently, as $M\to\infty$,
$$
\sup_{\eps>0}\left|E\left\{\varphi(Z)-\varphi(Z_M)+\varphi(Z^\eps_M)- \varphi(Z^\eps)\right\}\right|\to0.
$$
The result follows since by (i) for each fixed $M$,  $E\varphi(Z_M)-E\varphi(Z^\eps_M)\to0$, as $\eps\to0$.
\end{proof}

\begin{corollary}\label{cor_crit}
Let $\X$ a Banach space, $\Psi:\Omega\times\X\to\R$ a mapping and $\{W^\eps,\,\eps>0\}$ a family of $\X$--valued random variables defined on $(\Ss,\A,P)$ be such that
\begin{itemize}
\item[(i)] $x\to\Psi(\omega,x)$ is continuous, in $\P$--probability,
\item[(ii)] $\forall x\in\X$, $\omega\to\Psi(\omega,x)$ is $\F$--measurable,
\item[(iii)] for some $\delta>0$, the family
$\{\E|\Psi|^{1+\delta}(\cdot,W^\eps),\,\eps>0\}$  is tight.
\end{itemize}
If moreover $W^\eps$ converges in law towards $W$, then
as $\eps\to0$
$$
\E\Psi(\cdot,W^\eps)\text{ converges in law to }\E\Psi(\cdot,W).
$$
\end{corollary}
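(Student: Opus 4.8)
The plan is to deduce the corollary from Proposition \ref{p_lawconcr}, applied to $Z^\eps=\E\Psi(\cdot,W^\eps)$ and $Z=\E\Psi(\cdot,W)$, regarded as real random variables on $(\Ss,\A,P)$. The natural approximants come from truncating $\Psi$: for $M>0$ set $\Psi_M(\omega,x)=(\Psi(\omega,x)\wedge M^{1/\delta})\vee(-M^{1/\delta})$ and put $Z^\eps_M=\E\Psi_M(\cdot,W^\eps)$, $Z_M=\E\Psi_M(\cdot,W)$, together with $\chi^\eps=\E|\Psi|^{1+\delta}(\cdot,W^\eps)$ and $\chi^0=\E|\Psi|^{1+\delta}(\cdot,W)$. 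It is convenient to introduce the deterministic functions $F_M:\X\to\R$, $F_M(x)=\E\Psi_M(\cdot,x)$, and $G:\X\to[0,\infty]$, $G(x)=\E|\Psi|^{1+\delta}(\cdot,x)$; both are well defined by the measurability assumption (ii), and one has $Z^\eps_M=F_M(W^\eps)$, $\chi^\eps=G(W^\eps)$, and likewise for the limits.

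First I would verify hypothesis (ii) of the proposition. Since $|t-((t\wedge M^{1/\delta})\vee(-M^{1/\delta}))|\le|t|\,\mathbf 1_{\{|t|>M^{1/\delta}\}}\le|t|^{1+\delta}/M$, one obtains the pointwise bound $|\Psi-\Psi_M|\le|\Psi|^{1+\delta}/M$, whence $|Z^\eps-Z^\eps_M|\le\E|\Psi-\Psi_M|(\cdot,W^\eps)\le\chi^\eps/M$ and, identically, $|Z-Z_M|\le\chi^0/M$. In particular $Z^\eps$ and $Z$ are $P$--a.s.\ finite once $\chi^\eps$ and $\chi^0$ are, by H\"older's inequality.

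Next, for hypothesis (i) I would show that $F_M$ is bounded and continuous and then invoke the continuous mapping theorem. Boundedness is immediate, $|F_M|\le M^{1/\delta}$. For continuity, if $x_n\to x$ in $\X$ then assumption (i) gives $\Psi(\cdot,x_n)\to\Psi(\cdot,x)$ in $\P$--probability; as the truncation is $1$--Lipschitz, $\Psi_M(\cdot,x_n)\to\Psi_M(\cdot,x)$ in $\P$--probability as well, and since these variables are bounded by $M^{1/\delta}$ the convergence also holds in $L^1(\P)$, so $F_M(x_n)\to F_M(x)$. Because $F_M$ is bounded and continuous on $\X$ and $W^\eps\to W$ in law, it follows that $Z^\eps_M=F_M(W^\eps)\to F_M(W)=Z_M$ in law, which is exactly hypothesis (i).

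The remaining, and most delicate, point is the tightness of the enlarged family $\{\chi^\eps,\ \eps\ge0\}$: hypothesis (iii) only supplies it for $\eps>0$, and a priori we have no handle on $\chi^0=G(W)$ since $W$ is merely a weak limit. Here I would exploit that $G$ is lower semicontinuous: for $x_n\to x$, extract a subsequence realizing $\liminf_n G(x_n)$ and, by (i), a further subsequence along which $\Psi(\cdot,x_n)\to\Psi(\cdot,x)$ $\P$--a.s.; Fatou's lemma then gives $G(x)\le\liminf_n G(x_n)$. Hence $\{G>R\}$ is open for every $R$, and the portmanteau theorem combined with $W^\eps\to W$ in law yields
\[
P(\chi^0>R)=P\big(W\in\{G>R\}\big)\le\liminf_{\eps\to0}P\big(W^\eps\in\{G>R\}\big)\le\sup_{\eps>0}P(\chi^\eps>R).
\]
Given $\eta>0$, tightness of $\{\chi^\eps,\ \eps>0\}$ provides $R$ with $\sup_{\eps>0}P(\chi^\eps>R)\le\eta$, and the displayed bound forces $P(\chi^0>R)\le\eta$ as well; thus $\{\chi^\eps,\ \eps\ge0\}$ is tight (and in particular $\chi^0<\infty$ $P$--a.s.). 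With all hypotheses of Proposition \ref{p_lawconcr} in place, we conclude that $\E\Psi(\cdot,W^\eps)$ converges in law to $\E\Psi(\cdot,W)$. The main obstacle is precisely this transfer of tightness to the boundary index $\eps=0$, for which the lower semicontinuity of $G$ and the open--set half of the portmanteau theorem are the decisive ingredients.
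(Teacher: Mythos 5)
Your proof is correct and follows essentially the same route as the paper's: truncate $\Psi$, apply Proposition \ref{p_lawconcr} with $Z^\eps_M=\E\psi_M\circ\Psi(\cdot,W^\eps)$, $Z_M=\E\psi_M\circ\Psi(\cdot,W)$, and get hypothesis (i) of that proposition from the boundedness and continuity of the truncated map together with the continuous mapping theorem (the paper truncates at level $M$, yielding the bound $\chi^\eps/M^\delta$, while you truncate at $M^{1/\delta}$ to match the bound $\chi^\eps/M$ literally --- an immaterial reparametrization). The one genuine addition is your last step: the paper simply invokes Proposition \ref{p_lawconcr} without explaining why the family $\{\chi^\eps,\ \eps\ge0\}$, \emph{including} the boundary term $\chi^0=\E|\Psi|^{1+\delta}(\cdot,W)$, is tight, since hypothesis (iii) only covers $\eps>0$; your argument via lower semicontinuity of $G(x)=\E|\Psi|^{1+\delta}(\cdot,x)$ (Fatou along an a.s.\ convergent subsequence) and the open-set half of the portmanteau theorem cleanly transfers the tail bound to $\eps=0$, and in particular shows that $Z=\E\Psi(\cdot,W)$ is $P$--a.s.\ well defined --- a point the paper leaves implicit.
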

\begin{proof}
For $M>0$ and $z\in\R$ write $\psi_M(z)=(z\wedge M)\vee(-M)$.
Note that
$$
\left|\Psi(\cdot,W^\eps)-\psi_M\circ\Psi(\cdot,W^\eps)\right|\le \frac{|\Psi|^{1+\delta}(\cdot,W^\eps)}{M^\delta}.
$$
Consequently, we can apply Proposition \ref{p_lawconcr} with $Z^\eps=\E\Psi(\cdot,W^\eps)$, $Z=\E\Psi(\cdot,W)$, $Z_M^\eps=\E\psi_M\circ\Psi(\cdot,W^\eps)$, $Z_M=\E\psi_M\circ\Psi(\cdot,W)$, since by Lebesgue's dominated convergence theorem
$x\to\E\psi_M\circ\Psi(\cdot,x)$ is continuous from $\X$ into $\R$.
\end{proof}

\begin{remark}
Writing
$$
u^\eps(t,x)=\E\left[g(X^x_t)\exp(Y^{x,\eps}_t)\right]
$$
we will check the third condition of the Corollary with $\delta=1/3$ and we shall use the following H\"older inequality
$$
\E\left[|g|^{4/3}(X_t^x)\exp\Big(\frac{4}{3}Y^{x,\eps}_t\Big)\right]\le \left(\E g^2(X_t^x) \right)^{2/3} \left(\E\exp(4Y_t^{x,\eps}) \right)^{1/3}.
$$
So we have to check that the family $\{\E\exp(4Y_t^{x,\eps}),\,\eps>0\}$ is tight.
\end{remark}

\section{The case $\alpha=0$, $\beta>0$.}
In this case, $\gamma=\beta/2$. Without loss of generality, we restrict ourselves
to the case $\beta=1$. For each $\e>0$, $x\in\R$, we define the process
$$Y^{\eps,x}_t=\frac{1}{\sqrt{\e}}\int_0^t c\left(s,\frac{X^x_s}{\e}\right)ds,\ t\ge0.$$
It will be convenient in this section to assume that for each $x\in \R$,
$t\to c(t,x)$ is a. s. of class $C^2$, and that the $\R^3$--valued random field
\begin{equation}\label{triple_def}
\{(c(t,x),c'(t,x), c''(t,x));\ (t,x)\in\R_+\times\R\},
\end{equation}
is stationary, has zero mean, and is uniformly bounded;
here and later on in this section we use the notation
$$
c'(t,x)=\frac{\partial c}{\partial t}(t,x),\quad c''(t,x)=\frac{\partial^2 c}{\partial t^2}(t,x).
$$

We assume that random field (\ref{triple_def}) is ``$\phi$--mixing in the $x$ direction'', in the sense that the function $\phi:\R_+\to\R_+$ defined by
$$\phi(h)=\sup_{A\in\G_x,\ B\in\G^{x+h},\ P(A)>0}|P(B|A)-P(B)|,$$
where $$\G_x=\sigma\{c(t,z),\ t\ge0,z\le x\}\quad
\G^{y}=\sigma\{c(t,z),\ t\ge0,z\ge y\},$$
satisfies
$$\int_0^\infty \phi^{1/2}(h)dh<\infty.$$
We assume
moreover that (by stationarity, the following quantities do not depend on $t$)
$$\int_{-\infty}^\infty|Ec(t,0)c(t,x)|dx<\infty,\quad
\int_{-\infty}^\infty|Ec'(t,0)c'(t,x)|dx<\infty,$$
$$\int_{-\infty}^\infty|Ec''(t,0)c''(t,x)|dx<\infty.$$

\begin{remark} We suspect that the assumption of $C^2$ regularity is
much stronger than what is necessary for the result that follows to hold. However, in the case of weaker regularity assumptions,
there are technical difficulties which we were not able to overcome.
\end{remark}

\subsection{Weak convergence}
The aim of this subsection is to prove the
\begin{theorem}\label{conv-Y}
For each $t>0$,
\begin{equation}\label{loc_ti_int}
Y^{\eps,x}_t\rightarrow Y^x_t:=\int_0^t\int_\R L(ds,y-x) W(s,dy),
\end{equation}
in $P$--law, as $\eps\to 0$,
where, as above, $L(t,y)$ is the local time
at level $y$ and time $t$ of the Brownian motion $\{X^0_t,\: t\ge0\}$ defined
on
$(\Omega, \F, \P)$, and $\{W(t,y),\: y\in\R\}$ is a centered Gaussian random field defined on
$(\Ss,\A,P)$, with the covariance function
\begin{equation}\label{covwien}
E(W(t,x)W(t',x'))=\begin{cases}
\Psi(t-t')|x|\wedge |x'|,&\ \text{\rm if }\ x\, x'>0;\\
0,& \ \text{\rm if }\ x\,x'<0,
\end{cases}
\end{equation}
where for each $r\in\R$,
\[
\Psi(r)=\int_\R \Phi(r,y)dy,
\]
and the double integral in (\ref{loc_ti_int}) is defined below.
In particular $(X,L)$ and $W$ are independent.
\end{theorem}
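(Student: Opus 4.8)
The plan is to condition on the Brownian motion $X^x=x+B$ (equivalently on its local time $L$) and to prove a \emph{conditional} central limit theorem: for $\P$--almost every path, $Y^{\eps,x}_t$ converges, in $P$--law, to a centered Gaussian variable whose ($\F$--measurable, hence conditionally deterministic) variance is
\[
\sigma_t^2 := \int_\R\int_0^t\int_0^t\Psi(s-u)\,L(ds,y-x)\,L(du,y-x)\,dy .
\]
Because $c$ lives on $(\Ss,\A,P)$ and is independent of $B$, and because the conditional characteristic function $E\exp(i\lambda Y^{\eps,x}_t)$ is bounded by $1$, this conditional statement can be lifted by bounded convergence to the joint convergence $(X,Y^{\eps,x}_t)\Rightarrow(X,Y^x_t)$, the limit $Y^x_t$ being, conditionally on $X$, the law $\mathcal N(0,\sigma_t^2)$. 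This is exactly the content of (\ref{loc_ti_int})--(\ref{covwien}), once the double integral there is \emph{defined} as the conditionally Gaussian variable with variance $\sigma_t^2$ produced by the natural $L^2(P)$ isometry against the white--in--space, $\Psi$--correlated--in--time field $W$; the independence of $(X,L)$ and $W$ is then built into the construction.

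First I would pin down $\sigma_t^2$ and the normalisation $\g=\beta/2=1/2$. Using the local time form of (\ref{fk}) with $\alpha=0$, $\beta=1$, namely $Y^{\eps,x}_t=\eps^{-1/2}\int_0^t\int_\R c(s,(x+y)/\eps)\,L(ds,y)\,dy$, one gets
\[
E\big(Y^{\eps,x}_t\big)^2=\frac1\eps\int_0^t\int_0^t\int_\R\int_\R\Phi\Big(u-s,\tfrac{y'-y}{\eps}\Big)L(ds,y)L(du,y')\,dy\,dy'.
\]
Since $\Phi\in L^1(\R\times\R)$, the kernel $\eps^{-1}\Phi(u-s,\cdot/\eps)$ acts in the spatial variables as an approximate identity of total mass $\Psi(u-s)=\int_\R\Phi(u-s,w)\,dw$; using the a.s. joint continuity and compact support of Brownian local time one passes to the limit and obtains $E(Y^{\eps,x}_t)^2\to\int_\R\int_0^t\int_0^t\Psi(u-s)L(ds,y)L(du,y)\,dy$, which equals $\sigma_t^2$ by translation invariance of Lebesgue measure. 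A direct computation of the covariance of the limit in (\ref{loc_ti_int}), using (\ref{covwien}) and $E[W(s,dy)W(u,dy')]=\Psi(s-u)\,\delta(y-y')\,dy\,du$, gives the same expression, so the second moments match.

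The core of the argument is the conditional central limit theorem, and I expect this to be the main obstacle. Here I would exploit the hypothesis that $c$ is $\phi$--mixing in space with $\int_0^\infty\phi^{1/2}(h)\,dh<\infty$. After the substitution $z=(x+y)/\eps$ the variable $Y^{\eps,x}_t=\sqrt{\eps}\int_\R\big(\int_0^t c(s,z)\,L(ds,\eps z-x)\big)\,dz$ is a spatial integral, over a range of length $O(\eps^{-1})$, of a field that is mixing in $z$; partitioning the spatial axis into macroscopic blocks, each containing $O(\eps^{-1})$ correlation lengths and separated by small gaps, makes the block contributions asymptotically independent, while within a single block a one--dimensional mixing CLT produces an asymptotically Gaussian contribution carrying the corresponding piece of $\sigma_t^2$. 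Summing the finitely many blocks and letting the gaps shrink yields a Gaussian limit of total variance $\sigma_t^2$, provided the Lindeberg/negligibility conditions and the block--decomposition error terms are controlled uniformly in $\eps$. It is precisely here that the $C^2$ regularity in time, together with the extra integrability of $Ec'(t,0)c'(t,x)$ and $Ec''(t,0)c''(t,x)$, is used: these control the time--integrated field $\int_0^t c(s,z)\,L(ds,\eps z-x)$ and its covariance, supplying the moment bounds the mixing CLT requires. The delicate feature is that the dependence is short--range in space but long--range in time, and both must be handled at once.

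Finally I would record tightness of $\{Y^{\eps,x}_t,\ \eps>0\}$, which is immediate from the uniform bound on $E(Y^{\eps,x}_t)^2$ established above, and then assemble the pieces: pointwise--in--$\lambda$ convergence of the conditional characteristic functions to $\exp(-\lambda^2\sigma_t^2/2)$, a genuine characteristic function, gives conditional convergence in $P$--law for $\P$--a.e. path, and integrating against an arbitrary bounded continuous functional of $X$ (bounded convergence) upgrades this to the joint convergence $(X,Y^{\eps,x}_t)\Rightarrow(X,Y^x_t)$ asserted in the theorem, which in particular implies the stated marginal convergence.
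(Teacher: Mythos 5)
Your architecture --- a quenched CLT for $Y^{\eps,x}_t$ under $P$ conditional on the Brownian path, followed by a lifting to joint convergence via bounded convergence of conditional characteristic functions --- is genuinely different from the paper's. The paper never proves any conditional CLT: it proves a functional CLT (Proposition \ref{conv-W}) for the antiderivative fields $W_\eps(t,x)=\eps^{-1/2}\int_0^x c(t,y/\eps)\,dy$ and $W'_\eps$, living on $(\Ss,\A,P)$ alone, where only a standard invariance principle in the spatial variable (Billingsley) is needed; it then uses It\^o's formula \eqref{trickdef} to write $Y^{\eps,x}_t$ \emph{exactly} as a continuous functional of $(W_\eps,W'_\eps)$ and the Brownian path (formula \eqref{intpart}), concludes by continuous mapping, and identifies the limit through an It\^o-type formula for the limiting field (Lemma \ref{ito}). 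The outer layers of your plan are sound: the variance computation, the identification of the conditional law of $Y^x_t$ given $L$ as $N(0,\sigma_t^2)$ (which is how the paper's $\Lambda_{t,x}(L)$ is constructed), and the lifting argument are all correct, and if the conditional CLT were proved your route would deliver the theorem, arguably with less machinery (no anticipative double integral until the very end).

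The genuine gap is the conditional CLT itself, and the mechanism you propose for it does not work as stated. You partition into \emph{finitely many} macroscopic blocks and then invoke ``a one-dimensional mixing CLT'' inside each block together with a Lindeberg condition. With finitely many blocks, Gaussianity cannot come from a Lindeberg-type argument; it must come from a CLT within each block. But inside a block the field $z\mapsto\int_0^t c(s,z)\,L(ds,\eps z-x)$ is \emph{not} stationary: it is modulated by the measure-valued weight $L(ds,\eps z-x)$, so no off-the-shelf stationary mixing CLT applies. The natural repair --- freezing the local time at a reference level $y_j$ across the block --- fails in the naive sense, because $L(ds,a)$ and $L(ds,b)$ are mutually singular measures for $a\ne b$ (they are carried by the disjoint level sets $\{s:X^x_s=a\}$ and $\{s:X^x_s=b\}$), so the freezing error is not small in total variation; to exploit the weak-$*$ closeness of these measures one must integrate by parts in time, i.e.\ use $c'$ --- which is the real role of the $C^2$ assumption here, not the ``moment bounds'' you attribute to it (in the paper it feeds the tightness of $(W_\eps,W'_\eps)$, and the paper's It\^o-formula trick is precisely the device that converts the singular pairing against $L(ds,\cdot)$ into pairings against continuous objects). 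Alternatively one can make the argument work with a two-scale Bernstein blocking --- blocks of length $\eps^{-a}$, $0<a<1$, in the $z$ variable, so that their number diverges, each block is small on the macroscopic scale yet contains many correlation lengths, and a Lyapunov triangular-array CLT applies with no within-block CLT needed --- but that is a different argument from the one you describe, and carrying it out (fourth-moment bounds via $\phi$-mixing, decoupling errors, convergence of the sum of block variances to $\sigma_t^2$) is exactly the content that is missing from the proposal.
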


We define
\[
W_\eps(t,x)=\frac{1}{\sqrt{\e}}\int_0^xc\left(t,\frac{y}{\eps}\right)dy,
\quad
W'_\eps(t,x)=\frac{1}{\sqrt{\e}}\int_0^xc'\left(t,\frac{y}{\eps}\right)dy.
\]
Note that $\{W_\eps(t,x),W'_\eps(t,x)\}$ is a random field defined on the probability space $(\Ss,\A,P)$.

We first prove
\begin{proposition}\label{conv-W}
The sequence  of random fields $\{(W_\eps, W'_\eps)\}$ converges weakly
as random fields defined on the probability space $(\Ss,\A,P)$, as $\eps \to 0$,
in the space $C(\R_+\times\R;\R^2)$ equipped with the topology of uniform convergence on compact sets, to a centered Gaussian random field
$$\{(W(t,x),W'(t,x)),\ t\ge0, x\in\R\},$$ where the covariance function
of $\{W(t,x)\}$ is given by
\eqref{covwien}, and
$$W'(t,x)=\frac{dW}{dt}(t,x),\quad (t,x)\in \R_+\times\R, \text{ a. s.}$$
\end{proposition}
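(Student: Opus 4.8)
The plan is to establish weak convergence in $C(\R_+\times\R;\R^2)$ through the two classical ingredients: convergence of the finite-dimensional distributions together with tightness. The natural starting point is the scaling identity obtained from the substitution $y=\eps u$,
\[
W_\eps(t,x)=\sqrt{\eps}\int_0^{x/\eps}c(t,u)\,du,\qquad W'_\eps(t,x)=\sqrt{\eps}\int_0^{x/\eps}c'(t,u)\,du,
\]
which also makes transparent the relations $W'_\eps=\partial_t W_\eps$ and $\partial_t W'_\eps=W''_\eps$, where $W''_\eps(t,x)=\sqrt{\eps}\int_0^{x/\eps}c''(t,u)\,du$; these will be used repeatedly to transfer temporal regularity to spatial estimates.

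For the finite-dimensional distributions I would fix points $(t_1,x_1),\dots,(t_n,x_n)$ and consider the $\R^{2n}$-valued stationary process $u\mapsto(c(t_i,u),c'(t_i,u))_{1\le i\le n}$. Since the $\sigma$-algebras $\G_x,\G^y$ contain all times, this vector process is $\phi$-mixing in $u$ with the same $\phi$, so the assumption $\int_0^\infty\phi^{1/2}(h)\,dh<\infty$ permits invoking a multivariate central limit theorem for $\phi$-mixing continuous-parameter fields: each normalized integral $\sqrt{\eps}\int_0^{x_i/\eps}(\cdot)\,du$ is asymptotically Gaussian. The limiting covariance is read off from $E\,W_\eps(t,x)W_\eps(t',x')=\eps\int_0^{x/\eps}\int_0^{x'/\eps}\Phi(t'-t,v-u)\,du\,dv$ and its analogues for the cross and derivative terms; the $L^1$ assumptions on the correlations of $c,c',c''$ guarantee that these converge, as $\eps\to0$, to $\Psi(t-t')(|x|\wedge|x'|)$ (resp.\ to its $t$-derivatives for the terms involving $W'_\eps$) when $xx'>0$, while the mixing decorrelation of the disjoint integration ranges forces the limit to vanish when $xx'<0$. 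This reproduces exactly \eqref{covwien}.

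For tightness I would apply the multiparameter Kolmogorov criterion on each compact rectangle, anchoring the field by $W_\eps(t,0)=0$. Spatial increments are controlled by a Rosenthal-type moment bound for partial sums of $\phi$-mixing sequences: for every even $p=2m$,
\[
E\bigl|W_\eps(t,x)-W_\eps(t,x')\bigr|^{2m}=E\Bigl|\sqrt{\eps}\int_{x'/\eps}^{x/\eps}c(t,u)\,du\Bigr|^{2m}\le C_m|x-x'|^m,
\]
uniformly in $\eps$, and identically for $W'_\eps$ with $c$ replaced by $c'$. Temporal increments are reduced to spatial ones through the derivative identities: since $W_\eps(t,x)-W_\eps(t',x)=\int_{t'}^t W'_\eps(\tau,x)\,d\tau$, Jensen's inequality together with the bound $E|W'_\eps(\tau,x)|^{2m}\le C_m|x|^m$ yields $E|W_\eps(t,x)-W_\eps(t',x)|^{2m}\le C_m|t-t'|^{2m}|x|^m$, and the same argument bounds the time increments of $W'_\eps$ using $W''_\eps$ and the correlation assumption on $c''$ — this is precisely where the $C^2$ regularity enters. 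Combining the spatial and temporal estimates gives, on any compact set, $E|(W_\eps,W'_\eps)(t,x)-(W_\eps,W'_\eps)(t',x')|^{2m}\le C_R(|t-t'|+|x-x'|)^m$, and choosing $m\ge3$ (so the exponent exceeds the parameter dimension $2$) delivers tightness. The identity $W'=\partial_t W$ then passes to the limit from the a.s.\ relation $W_\eps(t,x)-W_\eps(t_0,x)=\int_{t_0}^t W'_\eps(\tau,x)\,d\tau$, since integration against $d\tau$ is continuous for uniform convergence on compact sets.

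I expect the finite-dimensional central limit theorem, and in particular the identification of the degenerate limiting covariance \eqref{covwien}, to be the main obstacle: one must justify the multivariate CLT for continuous-parameter $\phi$-mixing fields under the stated integral conditions and carefully quantify the mixing-induced asymptotic independence of the positive and negative spatial half-lines, so as to obtain both the vanishing of the covariance for $xx'<0$ and the $|x|\wedge|x'|$ structure for $xx'>0$.
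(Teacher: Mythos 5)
Your proposal is correct in outline, but it takes a genuinely different technical route from the paper. The paper treats the spatial variable as the ``time'' of a partial-sum process: for fixed $t_1<\dots<t_n$ it invokes Billingsley's Theorem 20.1 (functional CLT for $\phi$-mixing sequences, together with the comments on pp.~177--178) to get convergence of $(W_\eps(t_1,\cdot),\dots,W_\eps(t_n,\cdot))$ to an $n$-dimensional Wiener process in $x$ --- which delivers, in one stroke, the joint Gaussianity, the independence of the two spatial half-lines, and the $\Psi(t_i-t_j)\,|x|$ covariance; tightness in the time direction is then handled with only \emph{second} moments, via the pathwise bound $\sup_{t\le s\le t+\delta}|W_\eps(s,y)-W_\eps(t,y)|\le\int_t^{t+\delta}|W'_\eps(r,y)|\,dr$, Chebyshev, and Billingsley's criterion (8.5), while spatial tightness is again borrowed from the Theorem 20.1 argument. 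You instead prove finite-dimensional convergence pointwise via a multivariate CLT for mixing fields plus a direct covariance computation, and prove tightness via $2m$-th moment Rosenthal-type bounds and the two-parameter Kolmogorov criterion with $m\ge3$. What the paper's route buys is that the step you yourself flag as the main obstacle --- asymptotic joint Gaussianity across distinct spatial points, with the degenerate $|x|\wedge|x'|$ structure --- is exactly the content of the cited functional CLT, so it never has to be rebuilt by hand (in executing your plan you would end up doing Bernstein blocking over disjoint $x$-intervals, i.e.\ reproving that theorem). What your route buys is a more self-contained and quantitative tightness argument; but note that your moment bound $E|\sqrt{\eps}\int_{x'/\eps}^{x/\eps}c(t,u)\,du|^{2m}\le C_m|x-x'|^m$ for $m\ge3$ does \emph{not} follow from naive correlation expansions under the stated hypothesis $\int_0^\infty\phi^{1/2}(h)\,dh<\infty$ (e.g.\ $\phi(h)\sim h^{-2-\delta}$ defeats the crude cluster bounds for sixth moments); it does hold for bounded $c$, but the clean justification is the martingale approximation $\int_0^x c = \mathcal{N}_x+O_{L^\infty}(1)$ with bounded-increment martingale $\mathcal{N}$ plus Burkholder--Rosenthal --- precisely the Gordin-type trick the paper uses elsewhere (Lemma \ref{le-estim}) --- so you should either cite such a mixing-moment inequality or run that argument explicitly. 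Your derivation of $W'=\partial W/\partial t$ by passing to the limit in $W_\eps(t,x)-W_\eps(t_0,x)=\int_{t_0}^tW'_\eps(\tau,x)\,d\tau$ is exactly the paper's, and your observation that the correlation assumption on $c''$ enters only through the time increments of $W'_\eps$ matches the paper's assumption structure.
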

\bpf
For the sake of clarity of the exposition,
we prove the convergence result for $\{W(t,x)\}$, while the proof
for the pair $\{(W(t,x),W'(t,x))\}$ is essentially identical. The last statement of Proposition \ref{conv-W}
can be obtained by taking the weak limit in the identity
$$W_\eps(t,x)=W_\eps(s,x)+\int_s^tW'_\eps(r,x)dr.$$
We first show that the sequence of random fields $\{W_\eps,\ \eps>0\}$
is tight, as a sequence of random elements of $C(\R_+\times\R)$. Since
$W_\eps(t,0)=0$ for all $t\ge0$, it suffices to estimate the modulus of continuity of $W_\eps$. Now we have
$$|W_\eps(t,x)-W_\eps(s,y)|\le|W_\eps(t,y)-W_\eps(s,y)|+|W_\eps(t,x)-
W_\eps(t,y)|.$$
Concerning the first term, we have
\begin{align*}
\sup_{t\le s\le t+\delta}|W_\eps(s,y)-W_\eps(t,y)|
&\le \int_t^{t+\delta}|W'_\eps(r,y)|dr\\
E\left(\sup_{t\le s\le t+\delta}|W_\eps(s,y)-W_\eps(t,y)|^2\right)&\le
\delta^2 E\left(|W'_\eps(0,y)|^2\right)\\
&\le \delta^2 y\int_\R \left|E[c'(0,0)c'(0,z)]\right| dz.
\end{align*}
It now follows from Chebychev's inequality that
$$\frac{1}{\delta}P\left(\sup_{t\le s\le t+\delta}|W_\eps(s,y)-W_\eps(t,y)|>\eta\right)\le
c(y)\frac{\delta}{\eta^2},$$
from which Billingsley's criteria (8.5) in \cite{Bi} follows.

The increments of $W_\eps$ in the spatial variable can be treated
by an argument very similar to that in the proof of Theorem 20.1 in \cite{Bi}.

Now it remains to identify the limit law of the vector of random processes
$$(W_\eps(t_1,\cdot),\ldots,W_\eps(t_n,\cdot)),$$
for any $n\ge1$, any $0\le t_1<t_2<\cdots<t_n$. It follows from Theorem 20.1 in \cite{Bi}, together with the comments on pages 177 and 178 of that book that the above converges as $\eps\to0$ towards an
$n$--dimensional Wiener process
$$(W(t_1,\cdot),\ldots,W(t_n,\cdot)),$$
which is such that the $(i,j)$ entry of the covariance matrix of the random vector
$(W(t_1,x),\ldots,W(t_n,x))$ is $\Psi(t_i-t_j)|x|$.
\epf

We can now proceed with the

\noindent{\sc Proof of Theorem \ref{conv-Y}} :
We deduce from It\^o's formula that, if
$$\W_\eps(t,x):=\int_0^x W_\eps(t,y)dy,$$
\begin{equation}\label{trickdef}
\begin{array}{rl} \displaystyle
\W_\eps(t,X^x_t)=\!\!&\!\displaystyle \W_\eps(0,x)+\int_0^t\frac{\partial\W_\e}{\partial s}(s,X^x_s)ds\\[5mm]  \displaystyle
+\!\!&\!\displaystyle \int_0^tW_\eps(s,X^x_s)dX^x_s
+\frac{1}{2}\int_0^t\frac{\partial W_\eps}{\partial x}(s,X^x_s)ds,
\end{array}
\end{equation}
consequently
\begin{align}\nonumber
Y^{\eps,x}_t&=\int_0^t\frac{\partial W_\eps}{\partial x}(s,X^x_s)ds\\ \label{intpart}
&=2[\W_\eps(t,X^x_t)-\W_\eps(0,x)-\int_0^t\frac{\partial\W_\e}{\partial s}(s,X^x_s)ds -\int_0^tW_\eps(s,X^x_s)dX^x_s].
\end{align}
The mapping which to $f\in C(\R_+\times\R)$ associates
$g(t,x)=\int_0^{x} f(t,y)dy$
is continuous from $C(\R_+\times\R)$ into itself. Hence it follows from Proposition \ref{conv-W}
that $(W'_\eps,W_\eps,\W_\eps)\Rightarrow(W',W,\W)$ in $C(\R_+\times\R)^3$ as $\eps\to0$,
where $\W(t,x)=\int_0^xW(t,y)dy$, $t\ge0$, $x\in\R$.

Moreover the mappings
$$f\to\int_0^t f(s,X^x_s)dX^x_s,\quad
f\to\int_0^t f(s,X^x_s)ds$$ are continuous from $C(\R_+\times\R)$ into
$L^1(\Omega,\F,\P)$, equipped with the topology of
convergence in probability. Consequently
\begin{equation*}
Y^{\eps,x}_t\to2\Big[\W(t,X^x_t)-\W(0,x)-
\int_0^t\frac{\partial\W}{\partial s}(s,X^x_s)ds
  -\int_0^tW(s,X^x_s)dX^x_s\Big]
\end{equation*}
in $P$ law and $\P$ probability, hence also in $P\times\P$ law.

The result now
follows from the
\begin{lemma}\label{ito}
The following relation holds a. s.
\begin{align*}
\W(t,X^x_t)&=\W(0,x)+\int_0^t\frac{\partial\W}{\partial s}(s,X^x_s)ds+\int_0^tW(s,X^x_s)dX^x_s\\ &\
+\frac{1}{2}\int_0^t\int_\R L(ds,y-x)W(s,dy).
\end{align*}
\end{lemma}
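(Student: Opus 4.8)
The plan is to prove the identity conditionally on the Brownian path $X^x$, exploiting the independence of $W$ and $(X,L)$ recorded at the end of the statement. The only term that is not a classical object is the double integral, so I would begin by giving it a meaning. Fix the path $X^x$ (equivalently the local time field $L$), which is $\P\times P$-a.s. independent of the Gaussian field $W$. Conditionally on $X^x$, I define $\int_0^t\int_\R L(ds,y-x)W(s,dy)$ as the Wiener integral of the kernel $(s,y)\mapsto L(ds,y-x)$ against $W$, and I check that it is well defined by computing its conditional variance. Since the spatial increments of $W$ form a white noise with temporal correlation $\Psi$, i.e. $E[\partial_yW(s,y)\,\partial_{y'}W(s',y')]=\Psi(s-s')\,\delta(y-y')$ in the distributional sense, this variance equals $\int_0^t\int_0^t\int_\R\Psi(s-s')\,L(ds,y-x)\,L(ds',y-x)\,dy$, which is bounded by $\sup_r|\Psi(r)|\int_\R L(t,z)^2\,dz$ and hence finite $\P$-a.s., because $L(t,\cdot)$ is a.s. continuous, compactly supported and bounded.

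Next I would regularize $W$ in the spatial variable. Let $\rho_\delta$ be a mollifier and set $W^\delta(t,y)=(W(t,\cdot)*\rho_\delta)(y)$ and $\W^\delta(t,x)=\int_0^xW^\delta(t,y)\,dy$. For $P$-a.e. realization of $W$ the function $\W^\delta$ is of class $C^{1,2}$: it is $C^2$ in space after mollification, and $C^1$ in time with $\partial_t\W^\delta(t,x)=\int_0^x(W'*\rho_\delta)(t,y)\,dy$ continuous in $t$, because the pair $(W,W')$ is jointly continuous. Freezing such a realization of $W$, I apply the classical It\^o formula to $s\mapsto\W^\delta(s,X^x_s)$, valid $\P$-a.s., to obtain
$$\W^\delta(t,X^x_t)=\W^\delta(0,x)+\int_0^t\partial_s\W^\delta(s,X^x_s)\,ds+\int_0^tW^\delta(s,X^x_s)\,dX^x_s+\frac12\int_0^t\partial_yW^\delta(s,X^x_s)\,ds,$$
an identity that holds $\P\times P$-a.s. by Fubini.

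I would then let $\delta\to0$ term by term. Since $W^\delta\to W$ and $\W^\delta\to\W$ uniformly on compact sets for $P$-a.e. $W$, the first three terms converge to the corresponding terms of the lemma, using the continuity of the maps $f\mapsto\int_0^t f(s,X^x_s)\,ds$ and $f\mapsto\int_0^t f(s,X^x_s)\,dX^x_s$ from $C(\R_+\times\R)$ into $L^1(\P)$ already invoked in the proof of Theorem \ref{conv-Y}. For the last term, the occupation time formula gives
$$\frac12\int_0^t\partial_yW^\delta(s,X^x_s)\,ds=\frac12\int_0^t\int_\R\partial_yW^\delta(s,y)\,L(ds,y-x)\,dy,$$
and conditionally on $X^x$ this is a centered Gaussian variable whose $L^2(P)$ distance to $\frac12\int_0^t\int_\R L(ds,y-x)W(s,dy)$ is controlled by the same covariance computation as in the first step; the standard mollification estimate shows it tends to $0$ as $\delta\to0$. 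Matching the limits of the two sides yields the claimed identity, first in probability and then, since all other terms converge a.s., $\P\times P$-a.s. along a subsequence.

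The main obstacle is precisely this last term. Because $\partial_yW^\delta$ approximates a spatial white noise, it does not converge pathwise and its supremum norm blows up as $\delta\to0$; the convergence can only be obtained in $L^2(P)$ and rests entirely on the Gaussian/Wiener-integral structure together with the independence of $W$ and $X^x$. Concretely, one must verify that the random kernel $(s,y)\mapsto L(ds,y-x)$ lies in the space against which $W$ can be integrated and that its conditional variance is controlled uniformly in $\delta$; this is where the a.s.\ regularity of Brownian local time (joint continuity and square integrability in the space variable) and the boundedness and integrability of $\Psi$ enter decisively.
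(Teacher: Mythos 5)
Your proposal follows essentially the same route as the paper's own proof: mollify $W$ in the space variable, apply It\^o's formula pathwise (freezing the realization of $W$, by independence of $W$ and $B$), rewrite the second-order term through the occupation-time formula as $\int_0^t\int_\R \frac{\partial W_n}{\partial x}(s,y)L(ds,y-x)dy$, and pass to the limit, with the singular term handled in $L^2(P)$ via the Gaussian covariance structure and the a.s.\ continuity and compact support of $L(t,\cdot)$. The only difference is organizational: the paper constructs the limiting double integral \emph{as} the $L^2$-limit of the mollified integrals (proving the Cauchy property by an explicit covariance computation with an integration-by-parts identity for the mollifiers), whereas you posit the Wiener integral a priori and then invoke the "standard mollification estimate" for convergence to it --- which is the same computation in a different order.
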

\bpf
Let
\begin{equation}\label{W-mol}
W_n(t,x)=(W(t,\cdot)\ast\rho_n)(x),
\end{equation}
 where $\rho_n(x)=n\rho(nx)$ and $\rho$ is a
smooth map from $\R$ into $\R_+$ with compact support, whose integral over
$\R$ equals one, and $\W_n(t,x)=\int_0^xW_n(t,y)dy$.
Then from It\^o's formula
\begin{align*}
\W_n(t,X^x_t)&=\W_n(0,x)+\int_0^t\frac{\partial\W_n}{\partial s}(s,X^x_s)ds
+\int_0^tW_n(s,X^x_s)dX^x_s\\&\ +\frac{1}{2}
\int_0^t\frac{\partial W_n}{\partial x}(s,X^x_s)ds\\
&=\W_n(0,x)+\int_0^t\frac{\partial\W_n}{\partial s}(s,X^x_s)ds
+\int_0^tW_n(X^x_s)dX^x_s\\& \ +
\frac{1}{2}\int_0^t\int_\R L(ds,y-x)\frac{\partial W_n}{\partial x}(s,y)dy.
\end{align*}
The Lemma now follows by taking the limit as $n\to\infty$, provided we take the limit in the last term, which is done in the
\begin{proposition}
There exists a unique linear mapping
\[
 L\to \{\Lambda(L);\ t\ge0,\ x\in\R\}
\]
from the set of jointly
continuous $L$'s
which are increasing with respect to the $t$ variable and have
compact support in the $x$ variable for all $t$, into the set of
centered Gaussian random fields, with the coraviance function
given by
\[
\E(\Lambda_{t,x}\Lambda_{t',x'})=\int_\R dy\int_0^t\int_0^{t'}
\Psi(s-r) L(ds,y-x)L(dr,y-x'),
\]

where
\[
\Lambda_{t,x}(L)=L^2(\P)-\lim_{n\to\infty}\int_0^t\int_\R L(ds,y-x)\frac{\partial W_n}{\partial x}(s,y)dy.
\]
\end{proposition}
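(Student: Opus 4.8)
The plan is to recognize $\Lambda_{t,x}(L)$ as a first–chaos (Wiener) integral with respect to the Gaussian field $W$, using that the spatial derivative of $W$ is white in space with temporal correlation $\Psi$. For fixed $n$ and deterministic $L$, the random variable
\[
I_n(t,x)=\int_0^t\int_\R L(ds,y-x)\frac{\partial W_n}{\partial x}(s,y)\,dy
\]
is centered Gaussian, being a linear functional of $W$ integrated against the finite deterministic measure $L(ds,\cdot)\,dy$, whose support is bounded in $s$ and compact in $y$. The whole statement then reduces to three points: that $\{I_n(t,x)\}_n$ is Cauchy in $L^2(\P)$, that its limiting covariance is the announced one, and that linearity and the Gaussian character pass to the limit.

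First I would compute the mixed second moment. Since $\frac{\partial W_n}{\partial x}(s,y)=\int_\R W(s,z)\rho_n'(y-z)\,dz$, the covariance \eqref{covwien} yields
\[
\E\Big[\frac{\partial W_n}{\partial x}(s,y)\frac{\partial W_m}{\partial x}(r,y')\Big]
=\Psi(s-r)\int_\R\!\int_\R K(z,z')\rho_n'(y-z)\rho_m'(y'-z')\,dz\,dz',
\]
where $K(z,z')=(|z|\wedge|z'|)\mathbf{1}_{zz'>0}$. The key observation is that the distributional mixed derivative satisfies $\partial_z\partial_{z'}K(z,z')=\delta(z-z')$ — there is no singular contribution along the axes because $K$ vanishes there continuously. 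Integrating by parts twice to move both derivatives onto $K$ gives
\[
\E\Big[\frac{\partial W_n}{\partial x}(s,y)\frac{\partial W_m}{\partial x}(r,y')\Big]=\Psi(s-r)\,g_{n,m}(y-y'),\qquad g_{n,m}(u)=\int_\R\rho_n(u+w)\rho_m(w)\,dw,
\]
and $g_{n,m}$ is an approximate identity converging weakly to $\delta$. Substituting this into $\E[I_n(t,x)I_m(t',x')]$ and letting $n,m\to\infty$ independently, the joint continuity of $L$ makes the collapse onto the diagonal $y=y'$ meaningful, while its compact support together with the boundedness and integrability of $\Psi$ ensure finiteness; one obtains
\[
\E[I_n(t,x)I_m(t',x')]\longrightarrow\int_\R dy\int_0^t\int_0^{t'}\Psi(s-r)L(ds,y-x)L(dr,y-x'),
\]
which is exactly the claimed covariance. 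Specializing to $t=t'$, $x=x'$ shows $\E[(I_n-I_m)^2]\to0$, so the $L^2(\P)$–limit $\Lambda_{t,x}(L)$ exists; as an $L^2$–limit of a jointly Gaussian family it is jointly Gaussian and centered, with covariance equal to the limit just computed.

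Linearity of $L\mapsto\Lambda(L)$ is then immediate, since $I_n$ depends linearly on the measure $L$ and $L^2$–limits respect linear combinations; uniqueness follows because the defining $L^2$–limit determines $\Lambda_{t,x}(L)$ up to $\P$–null sets, and a centered Gaussian field is fixed by its covariance. The step I expect to be the main obstacle is precisely the passage $g_{n,m}\to\delta$ inside the double integral against $L(ds,\cdot)\,L(dr,\cdot)$: one must justify exchanging the mollifier limit with the integrations in $s,r,y,y'$, which requires uniform integrability of the kernels and a careful use of the joint continuity and compact support of $L$ to control the concentration on the diagonal. Once this convergence is secured, the remaining assertions are routine.
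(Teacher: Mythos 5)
Your proposal is correct and follows essentially the same route as the paper: mollify via $W_n$, compute the mixed second moments $\E[I_nI_m]$, and use integration by parts to collapse the kernel $\int\!\!\int \mathbf{1}_{\{zz'>0\}}(|z|\wedge|z'|)\rho_n'(y-z)\rho_m'(y'-z')\,dz\,dz'$ to the approximate identity $\int_\R\rho_n(y-z)\rho_m(y'-z)\,dz$, which yields the claimed covariance and the Cauchy property in $L^2(\P)$. Your distributional phrasing $\partial_z\partial_{z'}\bigl[(|z|\wedge|z'|)\mathbf{1}_{\{zz'>0\}}\bigr]=\delta(z-z')$ is exactly the paper's ``elementary computation based on integration by parts,'' and like the paper you leave the final approximate-identity passage against $L(ds,\cdot)L(dr,\cdot)$ as the routine-but-unwritten step.
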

\bpf We first need to show that the right hand side of the formula
for the covariance function of the process $\{\Lambda_{t,x}(L),\: t\ge0,\ x\in\R\}$ is
well defined. This follows from the fact that
\begin{align*}
0&\le\int_\R dy\int_0^t\int_0^{t'}|\Psi(s-r)| L(ds,y-x)L(dr,y-x')\\
 &\le \Psi(0)\int_\R L(t,y-x)L(t',y-x') dy\\
 &<\infty.
\end{align*}
The last inequality follows from the fact that both $L(t,\cdot)$ and
$L(t',\cdot)$
are continuous and have compact support.

Now define
\begin{align*}
\Lambda^{(n)}_{t,x}(L)&=\int_\R dy
\int_0^t\frac{\partial W_n}{\partial y}(s,y)L(ds,y-x)\\
&=\int_\R\int_\R\rho'_n(y-z)\left(\int_0^tW(s,z)L(ds,y-x)\right)dydz.
\end{align*}
In order to complete the proof of the Proposition, it suffices to show that
\begin{equation}\label{conv}
E\left[\Lambda^{(n)}_{t,x}(L)\Lambda^{(m)}_{t',x'}(L)\right]\to
\int_\R dy\int_0^t\int_0^{t'}\Psi(s-r) L(ds,y-x)L(dr,y-x'),
\end{equation}
as $n,m\to\infty$.
Let
$$\Sigma_{n,m}(y,y')=\int_\R\int_\R{\bf1}_{\{zz'>0\}}|z|\wedge|z'|\rho'_n(y-z)
\rho'_m(y'-z')dzdz'.$$
We have
$$E\left[\Lambda^{(n)}_{t,x}(L)\Lambda^{(m)}_{t',x'}(L)\right]=
\int_\R\int_\R\Sigma_{n,m}(y,y')dydy'
\int_0^t\int_0^{t'}\Psi(s-s')L(ds,y-x)L(ds',y'-x').$$
Now an elementary computation based on integration by parts yields
$$\Sigma_{n,m}(y,y')=\int_\R\rho_n(y-z)\rho_m(y'-z)dz,$$
and \eqref{conv} follows from this and the last identity. \epf

We now turn to the case where $L(t,x)$ is the local time of the standard Brownian motion $\{X_t,\:t\ge0\}$, defined on the
probability space $({\Omega},{\F},\P)$.
Thus we now define the stochastic process $\{\Lambda_{t,x}(L),\: t\ge0,\ x\in\R\}$
on the product probability space $(\Ss\times\Omega,\A\otimes\F,P\times\P)$, and denote
$\bar{\P}:=P\times\P$.
We have the
\begin{proposition}
For each fixed $x\in\R$, the process $\{\Lambda_{t,x}(L),\: t\ge0\}$ has a $\bar{\P}$ a. s. continuous modification.
\end{proposition}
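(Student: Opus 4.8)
The plan is to establish the existence of a $\bar\P$-almost surely continuous modification of $t\mapsto\Lambda_{t,x}(L)$ via the Kolmogorov continuity criterion. Since $\Lambda_{t,x}(L)$ is a centered Gaussian random variable (for fixed $x$, as $t$ varies, $\{\Lambda_{t,x}(L)\}$ is a centered Gaussian process conditionally on the local time $L$), the natural route is to bound the second moment of increments $\Lambda_{t,x}(L)-\Lambda_{s,x}(L)$ and then leverage Gaussianity to upgrade to higher moments. First I would fix $x$ and the Brownian realization (hence the local time $L$), and compute, using the covariance formula from the preceding Proposition,
\[
\E\left[(\Lambda_{t,x}(L)-\Lambda_{s,x}(L))^2\,\big|\,\F\right]
=\int_\R dy\int_s^t\int_s^t\Psi(r-r')L(dr,y-x)L(dr',y-x'),
\]
where the integration is restricted to the increment of the local time measure on $(s,t]$.

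The key step is to bound this conditional variance by something of the form $C(L)\,(t-s)^{a}$ for some exponent $a>0$. Using $|\Psi(r-r')|\le\Psi(0)$ and the same integration-by-parts style estimate as in the previous Proposition, one reduces to controlling $\int_\R (L(t,y-x)-L(s,y-x))^2\,dy$, i.e. an $L^2$ norm of the increment of the local time field over the time interval $(s,t]$. Here I would invoke the well-known H\"older regularity of Brownian local time in the time variable: $L(t,\cdot)-L(s,\cdot)$ is supported in (and its total mass is) of order $(t-s)^{1/2-\eta}$ type bounds, so that the conditional variance is bounded by a random constant times a positive power of $(t-s)$. Since the conditional law is Gaussian, every even moment satisfies
\[
\E\left[(\Lambda_{t,x}(L)-\Lambda_{s,x}(L))^{2p}\,\big|\,\F\right]
=c_p\left(\E\left[(\Lambda_{t,x}(L)-\Lambda_{s,x}(L))^2\,\big|\,\F\right]\right)^{p},
\]
so taking $p$ large produces an unconditional bound $\bar\E|\Lambda_{t,x}(L)-\Lambda_{s,x}(L)|^{2p}\le C\,(t-s)^{1+\theta}$ for suitable $\theta>0$, after taking the $\P$-expectation of the (random) constant raised to the power $p$ and checking it is finite using the boundedness and integrability moments of the local time.

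The main obstacle I anticipate is verifying that the random constant $C(L)$ arising from the spatial $L^2$ norm of the local time increment has finite moments of the required order, uniformly enough in $t,s$ to yield the Kolmogorov exponent strictly above one. This requires quantitative estimates on $\E\big[\big(\int_\R(L(t,y)-L(s,y))^2dy\big)^p\big]$, which can be obtained from the Barlow--Yor inequalities or from explicit moment computations for Brownian local time; the delicate point is balancing the Gaussian power $p$ (needed to beat the Kolmogorov threshold) against the growth of these local-time moments. Once the moment bound $\bar\E|\Lambda_{t,x}(L)-\Lambda_{s,x}(L)|^{2p}\le C|t-s|^{1+\theta}$ is secured, the Kolmogorov--Chentsov theorem immediately provides the desired $\bar\P$-almost surely continuous modification, completing the proof.
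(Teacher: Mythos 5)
Your proposal is correct and follows essentially the same route as the paper: condition on the Brownian path so that the increment $\Lambda_{t,x}(L)-\Lambda_{s,x}(L)$ is centered Gaussian with variance $\int_\R dy\int_s^t\int_s^t\Psi(r-r')L(dr,y-x)L(dr',y-x)$, bound this by $\Psi(0)\int_\R(L(t,y)-L(s,y))^2dy$, take high moments using Gaussianity, and conclude by Kolmogorov's criterion. The obstacle you flag dissolves at once via the occupation-time identity $\int_\R(L(t,y)-L(s,y))\,dy=t-s$ (the total mass of the increment is exactly $t-s$, not merely of order $(t-s)^{1/2-\eta}$): it gives $\int_\R(L(t,y)-L(s,y))^2dy\le\sup_y\bigl(L(t,y)-L(s,y)\bigr)\,(t-s)$, and the Barlow--Yor inequality bounds $\E\bigl(\sup_y(L(t,y)-L(s,y))\bigr)^{p/2}\le c_p\,\E\bigl(\sup_{0\le u\le t}|X_u|^{p/2}\bigr)<\infty$ uniformly in $s<t\le T$, so that $\bar{\E}\bigl(|\Lambda_{t,x}(L)-\Lambda_{s,x}(L)|^p\bigr)\le C_p(t-s)^{p/2}$ and any fixed $p>2$ suffices --- no balancing of the Gaussian power against local-time moment growth is needed.
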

\bpf  We have, for $0\le s<t$,
\begin{align*}
\bar{\E}\left(|\Lambda_{t,x}(L)-\Lambda_{s,x}(L)|^p\right)
&=\E\left(\Big|\int_\R dy\int_s^t\int_s^t\Psi(r-r')L(dr,y)L(dr',y)\Big|^{p/2}\right)\\
&\le \Psi(0)^{p/2}\E\left(\Big|\int_\R (L(t,y)-L(s,y))^2 dy \Big|^{p/2}\right)\\
&\le \Psi(0)^{p/2}\E\left(\left|\sup_y(L(t,y)-L(s,y))(t-s)\right|^{p/2}\right),
\end{align*}
where we have used the following well known formula
\[
\int_\R L(t,x)dx=t.
\]
Now from (III) page 200 of Barlow, Yor \cite{BY}, there exists a universal
constant $c_p$ such that
\[
\E\left(\sup_x(L(t,x)-L(s,x))^{p/2}\right)\le
c_p\E\left(\sup_{0\le s\le t}|X_s|^{p/2}\right).
\]
The above right hand side is finite, and
\[
\bar{\E}\left(|\Lambda_t(L)-\Lambda_s(L)|^p\right)
\le C_p(t-s)^{p/2},
\]
from which the result follows, if we choose $p>2$.

\subsection{Convergence of the sequence $u^\eps$}
In order to deduce the convergence of $u^\eps$ from that of $Y^{\e,x}_t$ and
Corollary \ref{cor_crit},
we need some uniform integrability under $\P$
of the collection of random variables
$$\left\{\exp\left[\frac{1}{\sqrt{\e}}\int_0^t c\left(s,\frac{X^x_s}{\e}\right)ds\right],\
\e>0\right\}.
$$
For each $0<\g<1/2$, $t>0$, $\e>0$, we define the $\R_+$--valued random variables
\begin{equation*}
\xi^\e_{t,\g}=\sup_{0\le s\le t,\ x\in\R}\frac{\left|W_\e(s,x)\right|}{(1+|x|)^{1-\g}},\quad
\eta^\e_{t,\g}=\sup_{0\le s\le t,\ x\in\R}\frac{
\left|\frac{\partial W_\e}{\partial s}(s,x)\right|}{(1+|x|)^{1-\g}}.
\end{equation*}
We now prove the
\begin{lemma}\label{le-tight} For each $t>0$, $0<\g<1/2$ and $\e_0>0$, the two collections of random variables
$\{\xi^\e_{t,\g},\ 0<\e\le\e_0\}$ and $\{\eta^\e_{t,\g},\ 0<\e\le\e_0\}$
are tight.
\end{lemma}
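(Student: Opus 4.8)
The plan is to prove a uniform--in--$\e$ moment bound $\sup_{0<\e\le\e_0}\E\big[(\xi^\e_{t,\g})^{2p}\big]<\infty$ for a suitably large integer $p$, from which tightness follows immediately by Chebyshev's inequality (and likewise for $\eta^\e_{t,\g}$). The starting point is the scaling identity $W_\e(s,x)=\sqrt{\e}\int_0^{x/\e}c(s,z)\,dz$, together with $\frac{\partial W_\e}{\partial s}(s,x)=W'_\e(s,x)=\sqrt{\e}\int_0^{x/\e}c'(s,z)\,dz$.

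First I would establish, uniformly in $\e$, the increment estimates $\E|W_\e(s,x)-W_\e(s,x')|^{2p}\le C_p|x-x'|^p$ and $\E|W'_\e(r,x)|^{2p}\le C_p|x|^p$, and, integrating the latter in $r$, $\E|W_\e(s,x)-W_\e(s',x)|^{2p}\le C_p|s-s'|^{2p}|x|^p$. The key input is the moment inequality
$$\E\Big|\int_0^T c(s,z)\,dz\Big|^{2p}\le C_p\,T^{p}$$
for the bounded, centred, $\phi$--mixing stationary field $c$; after the change of variables $z=y/\e$ this yields the displayed bounds with constants independent of $\e$. This moment inequality is the heart of the argument and the main obstacle: it is obtained by expanding the $2p$--fold integral, splitting the domain according to the ordering of the integration variables, and repeatedly applying the $\phi$--mixing covariance bound (which, since $c$ is bounded, may be used in its $L^\infty$ form) to decouple across the largest gap. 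The assumption $\int_0^\infty\phi^{1/2}(h)\,dh<\infty$, together with $\int_\R|\E c(t,0)c(t,x)|\,dx<\infty$, guarantees that the constants $C_p$ are finite and $\e$--independent. Taking $x'=0$ and using $W_\e(s,0)=0$ gives in particular $\E|W_\e(s,x)|^{2p}\le C_p|x|^p$, the Brownian scaling in the spatial variable.

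Next I would convert these increment bounds into a maximal estimate over space--time rectangles. Decomposing the $x$--axis into dyadic shells $I_k=\{x:2^{k-1}\le|x|<2^k\}$, $k\ge1$, and rescaling the spatial variable on each shell to unit length, the two--parameter increment bounds become uniform in both $k$ and $\e$; hence a Garsia--Rodemich--Rumsey (or chaining) argument, valid once $p$ is large enough for the two--parameter continuity criterion, furnishes a bound for the rescaled supremum that is independent of $k$ and $\e$, which after undoing the scaling reads
$$\E\Big[\sup_{0\le s\le t,\ x\in I_k}|W_\e(s,x)|^{2p}\Big]\le C_{p,t}\,2^{kp}.$$
(The bounded central region $\{|x|<1\}$ is handled directly by the same estimate on a compact rectangle.) Since on $I_k$ we have $(1+|x|)^{1-\g}\ge 2^{(k-1)(1-\g)}$, bounding $\xi^\e_{t,\g}$ by the largest shell contribution gives
$$\E\big[(\xi^\e_{t,\g})^{2p}\big]\le C+\sum_{k\ge1}2^{-2p(k-1)(1-\g)}\,\E\Big[\sup_{0\le s\le t,\ x\in I_k}|W_\e(s,x)|^{2p}\Big]\le C\sum_{k\ge1}2^{p(2\g-1)k}+C.$$
Because $\g<1/2$ the exponent $2\g-1$ is negative, the series converges, and its sum is independent of $\e$; thus $\sup_{0<\e\le\e_0}\E\big[(\xi^\e_{t,\g})^{2p}\big]<\infty$ and $\{\xi^\e_{t,\g}\}$ is tight.

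Finally, the estimate for $\eta^\e_{t,\g}$ follows verbatim with $W_\e$ replaced by $W'_\e$: its spatial increments are controlled by the $\phi$--mixing moment inequality applied to $c'$, and its time increments by $\frac{\partial W'_\e}{\partial s}(s,x)=\sqrt{\e}\int_0^{x/\e}c''(s,z)\,dz$, i.e. by the same inequality applied to $c''$. This is exactly where the $C^2$ regularity of $t\mapsto c(t,x)$ and the three $L^1$ covariance hypotheses on $c,c',c''$ enter (the mixing coefficient $\phi$ governs $c'$ and $c''$ as well, since these are measurable with respect to the same $\sigma$--fields $\G_x,\G^y$). The identical dyadic summation, again convergent thanks to $\g<1/2$, yields $\sup_{0<\e\le\e_0}\E\big[(\eta^\e_{t,\g})^{2p}\big]<\infty$ and hence the tightness of $\{\eta^\e_{t,\g}\}$.
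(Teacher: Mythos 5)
Your overall architecture --- uniform $2p$-th moment bounds, a chaining/maximal estimate on dyadic shells, and summation over shells using $\g<1/2$ --- is coherent, and the shell-summation step is essentially the same weighting argument the paper uses. The genuine gap sits exactly where you locate ``the heart of the argument'': the inequality $E\big|\int_0^T c(s,z)\,dz\big|^{2p}\le C_pT^p$ is not delivered by the largest-gap decoupling you sketch, for the values of $p$ your chaining step requires. Your two-parameter continuity criterion needs the spatial increment exponent $p$ to exceed the parameter dimension $2$, i.e.\ $p\ge3$, hence at least sixth moments. Now test the sketch on $E|S_T|^6$: order the six points and consider configurations whose five gaps are all of size about $g$. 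Decoupling across any gap (in the $L^1$--$L^\infty$ form, with error $2\phi(g)\|\cdot\|_1\|\cdot\|_\infty$) gives $|E[c(z_1)\cdots c(z_6)]|\le C\phi(g)$, and the only decay that the standing assumption $\int_0^\infty\phi^{1/2}(h)\,dh<\infty$ guarantees (via monotonicity of $\phi$) is $\phi(g)\le Cg^{-2}$. Integrating this bound over such configurations produces a contribution of order $T\int_0^T\phi(g)\,g^4\,dg\sim T^4$, overshooting the required $CT^3$; in general the naive expansion needs $\phi(g)\lesssim g^{-p}$ for the $2p$-th moment, a hypothesis this section does not make. (For $p=2$ the expansion does work --- splits that isolate a single centred factor have vanishing main term, and $T\int_0^T\phi(g)g^2\,dg\le CT^2$ --- but fourth moments fall below what the two-parameter criterion you invoke requires.)

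The moment bound you want is in fact true under the stated hypotheses, but it needs a different mechanism, and that mechanism is what the paper's own proof is built on. The paper writes $\int_0^x c(z)\,dz$ as a $\G_x$-martingale plus the corrector $\eta_x=\int_0^\infty E[c(y+x)\,|\,\G_x]\,dy$, which is a.s.\ bounded by the Ethier--Kurtz conditional-expectation estimate (using $\int_0^\infty\phi(h)\,dh<\infty$); Doob's inequality then yields the maximal estimate $E\big(\sup_{0\le x\le r}|W_\eps(x)|^2\big)\le C(\eps+r)$, and the dyadic Chebyshev summation closes the argument with second moments only. High moments and two-parameter chaining are avoided altogether because the paper exploits the assumed $C^2$-regularity of $t\mapsto c(t,x)$: it bounds the supremum over $s\in[0,t]$ by the value at $s=0$ plus $\int_0^t|\partial_sW_\eps|\,ds$ (and, for $\eta^\eps_{t,\g}$, plus $\int_0^t|\partial^2_sW_\eps|\,ds$), reducing everything to one-parameter suprema in $x$. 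To repair your proof you could either (a) establish your moment inequality by this martingale approximation (the martingale has bounded increments, so Burkholder/Azuma-type bounds give all moments of order $T^p$), or cite a Rosenthal-type inequality for $\phi$-mixing sequences, or (b) keep $p=2$ and replace your chaining in time by the paper's time-derivative reduction (or by an anisotropic Kolmogorov criterion exploiting that your time exponent is $2p$ rather than $p$). As written, the sixth-moment claim is unsupported and the proof does not close.
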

\bpf We have
$$\eta^\eps_{t,\gamma}\le\sup_{x\in\R}\frac{
\left|\frac{\partial W_\e}{\partial s}(0,x)\right|}{(1+|x|)^{1-\g}}
+
\sup_{\ x\in\R}\int_0^t\frac{
\left|\frac{\partial^2 W_\e}{\partial s^2}(s,x)\right|}{(1+|x|)^{1-\g}}ds,$$
and similarly
$$\xi^\eps_{t,\gamma}\le\sup_{x\in\R}\frac{
\left|W_\e(0,x)\right|}{(1+|x|)^{1-\g}}
+
\sup_{\ x\in\R}\int_0^t\frac{
\left|\frac{\partial W_\e}{\partial s}(s,x)\right|}{(1+|x|)^{1-\g}}ds.$$
It remains to show that each of the four collections of r. v. appearing in the two above right hand sides is tight. Each of the four terms can be treated by the eexact same argument as used in the proof of Lemma 5 page 295--296 of
\cite{papi_narvik}, which we now reproduce for the convenience of the reader, in the case of the first term of the second right--hand side.
\epf

We drop the index $t$ for simplicity, and define
\[
\zeta^\eps_\gamma=\sup_{x\in\R}\frac{|W_\eps(x)|}{(1+|x|)^{1-\gamma}}.\\
\]
We have the
\begin{lemma}\label{le-estim}
For any $0<\gamma<1/2$ and $\eps_0>0$, the collection of random variables
$\{\zeta^\eps_\gamma,\: 0<\eps\le\eps_0\}$
is tight.
\end{lemma}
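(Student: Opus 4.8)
The plan is to prove tightness by controlling a single sufficiently high moment of $\zeta^\eps_\gamma$ uniformly in $\eps$ and then invoking Markov's inequality. Concretely, I would show that for $p$ large enough,
$$
\sup_{0<\eps\le\eps_0}E\left[(\zeta^\eps_\gamma)^{2p}\right]<\infty,
$$
which, through $P(\zeta^\eps_\gamma>K)\le K^{-2p}E[(\zeta^\eps_\gamma)^{2p}]$, immediately gives tightness. The weight $(1+|x|)^{1-\gamma}$ is tailored to the fact that, by Brownian-type scaling, $W_\eps(x)$ has fluctuations of order $|x|^{1/2}$; the hypothesis $\gamma<1/2$ guarantees $1-\gamma>1/2$, so that the normalized field decays at infinity and the series over spatial blocks converges.

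The first and main ingredient is a moment estimate on spatial increments, uniform in $\eps$:
$$
E\left[|W_\eps(x)-W_\eps(x')|^{2p}\right]\le C_p\,|x-x'|^p.
$$
Changing variables $u=y/\eps$ yields $W_\eps(x)-W_\eps(x')=\sqrt{\eps}\int_{x'/\eps}^{x/\eps}c(t,u)\,du$, so that, writing $\ell=|x-x'|/\eps$, the bound reduces to
$$
E\left[\Big|\int_a^{a+\ell}c(t,u)\,du\Big|^{2p}\right]\le C_p\,\ell^p,
$$
uniformly in $a$ (by stationarity) and in $\ell$. For $\ell\le1$ this is immediate from the uniform bound on $c$; for $\ell\ge1$ it follows from the standard moment inequality for $\phi$--mixing fields, using $\int_0^\infty\phi^{1/2}(h)\,dh<\infty$ together with the variance estimate $E[(\int_a^{a+\ell}c\,du)^2]\le\ell\int_\R|Ec(t,0)c(t,w)|\,dw\le C\ell$. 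Undoing the scaling recovers the claimed increment bound with a constant independent of $\eps$, exactly as in the proof of Lemma 5 of \cite{papi_narvik}.

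With this increment bound in hand, I would apply a Garsia--Rodemich--Rumsey / Kolmogorov-type maximal inequality, which, since $p>1$, produces a continuous modification and gives
$$
E\left[\sup_{x\in[n,n+1]}|W_\eps(x)-W_\eps(n)|^{2p}\right]\le C_p,
$$
with $C_p$ independent of $n$ and $\eps$. Combining this with $E[|W_\eps(n)|^{2p}]\le C_p|n|^p$ (the increment bound applied on $[0,n]$, recalling that $W_\eps(0)=0$), and using $(\sup_n a_n)^{2p}\le\sum_n a_n^{2p}$ over the unit blocks $[n,n+1]$ with the lower bound $(1+|x|)^{1-\gamma}\ge(1+|n|)^{1-\gamma}$, I obtain
$$
E\left[(\zeta^\eps_\gamma)^{2p}\right]\le C_p\sum_{n\in\Z}(1+|n|)^{p-2p(1-\gamma)}=C_p\sum_{n\in\Z}(1+|n|)^{-p(1-2\gamma)}.
$$
Since $\gamma<1/2$, choosing $p>1/(1-2\gamma)$ makes the series converge, and the resulting bound is uniform in $0<\eps\le\eps_0$.

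The main obstacle is the uniform-in-$\eps$ increment bound: one must derive the moment inequality $E|\int_a^{a+\ell}c\,du|^{2p}\le C_p\ell^p$ with a constant depending only on $p$, the sup-norm of $c$, the mixing rate $\phi$, and the covariance integral, and crucially not on $a$ or on $\ell$. This is where the $\phi$--mixing hypothesis and the $L^1$ decay of the spatial correlation enter decisively; the remaining block decomposition and summation are routine once the correct scaling exponent $1-\gamma>1/2$ has been identified.
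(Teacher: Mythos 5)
Your proof is correct, but it follows a genuinely different route from the paper's. The paper works entirely at the level of \emph{second} moments: it writes $\int_0^x c(r)\,dr$ as a $\G_x$--martingale $\mathcal N_x$ plus a compensator $\eta_x=\int_0^\infty E\big(c(y+x)\,|\,\G_x\big)\,dy$, which is bounded a.s.\ by a deterministic constant thanks to the $\phi$--mixing assumption; Doob's $L^2$ maximal inequality then gives $E\big(\sup_{0\le x\le r}|W_\eps(x)|^2\big)\le C(\eps+r)$, and the supremum over $x$ is handled by Chebyshev plus a union bound over \emph{dyadic} blocks $[2^{j-1},2^j]$, where the series $\sum_j 2^{j(2\gamma-1)}$ converges geometrically for every $\gamma<1/2$. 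The geometric growth of the blocks is exactly what compensates for having only quadratic tails; with your \emph{unit} blocks and only second moments the sum $\sum_n (1+|n|)^{2\gamma-1}$ would diverge, which is why you are forced to take $2p$-th moments with $p>1/(1-2\gamma)$. Your route therefore hinges on the Rosenthal-type bound $E\big|\int_a^{a+\ell}c\,du\big|^{2p}\le C_p\,\ell^p$ for mixing fields, which is the one step requiring real care: it is not in Billingsley (who only gives fourth moments), but it does hold under the section's hypotheses, either via Shao-type moment inequalities for $\phi$--mixing (using $\sum_n\phi^{1/2}(n)<\infty$), or via the combinatorial cluster argument the paper itself uses elsewhere (cf.\ the estimates (\ref{vst_3}) in Lemma \ref{l_uni_inte}), exploiting $\rho_{\rm mc}\le 2\phi^{1/2}\in L^1$. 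What each approach buys: the paper's argument is lighter on moment machinery (Doob plus $L^2$ bounds only, with mixing entering solely through the a.s.\ bound on $\eta_x$), while yours requires the heavier high-moment inequality but delivers a stronger conclusion, namely $\sup_{0<\eps\le\eps_0}E\big[(\zeta^\eps_\gamma)^{2p}\big]<\infty$, i.e.\ uniform moment bounds rather than mere tightness. One small correction: your appeal to ``the proof of Lemma 5 of \cite{papi_narvik}'' is misplaced — that lemma \emph{is} the martingale/Doob/dyadic argument just described, not a high-moment increment estimate.
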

\bpf
Due to the symmetry it is sufficient to estimate $|W_\eps(x)|$ for
$x>0$. We have
\begin{align*}
E(|W_\eps(r)|^2)&=\eps\int\limits_0^{r/\eps}\int\limits_0^{r/\eps}
E(c(x)c(y))dxdy\\&
\le
2\eps\int\limits_0^{r/\eps}\int\limits_0^\infty|E(c(0)c(x))|dsxdy
\\&
\le 2rc_0.
\end{align*}

Denote by $\mathcal{G}_x=\sigma\{c(y),\ y\le x\}$ and
$$
\eta_x=\int\limits_0^\infty E(c(y+x)|{\mathcal G}_x)dy.
$$

Combining the estimate (2.23) in the case $p=\infty$ in Proposition 7.2.6.
from \cite{EtKu} with our condition that the correlation function $\Phi$ is both bounded and integrable, we deduce that
  the stationary process $\{\eta_x,\: x\ge0\}$ satisfies
 $|\eta_x|\le c_1$ a.s. for all $x>0$, with a non-random constant
$c_1$. Moreover,
$$
\int_0^x c(r)dr-\eta_x
$$
is a square integrable ${\G}_x$ martingale. Denote it by ${\cal
N}_x$. Clearly
\begin{align*}
W_\eps(x)&=\frac{\sqrt{\eps}}{\bar c}\int_0^{x/\eps} c(y)dy\\&
=
\frac{\sqrt{\eps}}{\bar c}{\cal N}_{x/\eps}+
\frac{\sqrt{\eps}}{\bar c}\eta_{x/\eps},
\end{align*}
and thus we deduce from Doob's inequality
\begin{align*}
E\Big(\mathop{\rm sup}\limits_{0\le x\le r} |W_\eps(x)|^2\Big) &\le
\frac{2}{\bar c^2}E(\mathop{\rm sup}\limits_{0\le x\le
r/\eps}\big({\sqrt{\eps}\cal N}_{x}\big)^2)+2\frac{c^2_1\eps}{\c^2}
\\&
\le \frac{4}{\bar c^2}E((\sqrt{\eps}{\cal N}_{r/\eps})^2)+
2\frac{c^2_1\eps}{\c^2} \\		
&\le 8E(|W_\eps(r)|^2)+10\frac{c^2_1\eps}{\c^2}\\
&\le C(\eps+r),    
\end{align*}
provided $C=(16c_0)\vee(10c_1^2/\c^2)$. Now for $j\ge1$, $M>0$,
\begin{align*}
P\left(\sup_{2^{j-1}<r\le 2^j}\frac{|W_\eps(r)|}{(1+r)^{1-\gamma}}\ge M\right)
&\le P\left(\sup_{0\le r\le 2^j}|W_\eps(r)|\ge (1+2^{j-1})^{1-\gamma}M\right)\\
&\le\frac{C(\eps+2^j)}{M^2(1+2^{j-1})^{2-2\gamma}}\\
&\le(\eps\vee1)\frac{2C}{M^2}(1+2^{j-1})^{2\gamma-1}.
\end{align*}
Summing up over $j\ge1$, we deduce that
\begin{align*}
P\left(\zeta^\eps_\gamma\ge M\right)&\le
2P\left(\sup_{r>0}\frac{|W_\eps(r)|}{(1+r)^{1-\gamma}}\ge M\right)\\
&\le(\eps\vee1)\frac{4C}{M^2}\sum_{j=0}^\infty(1+2^{j})^{2\gamma-1}\\
&\le (\eps\vee1)\frac{C'}{M^2}.
\end{align*}
This completes the proof of Lemma.
\epf

We can now establish the required uniform integrability
\begin{proposition}\label{ui}
The collection of random variables
$$\left\{\E\left(\exp\left[\frac{4}{\sqrt{\e}}\int_0^t c\left(s,\frac{X^x_s}{\e}\right)ds\right]\right),\
\e>0\right\}
$$
is $P$--tight.
\end{proposition}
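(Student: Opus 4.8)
The plan is to bound the $\P$-expectation $Z^\e:=\E[\exp(4Y^{\eps,x}_t)]$, for each fixed realization $\omega\in\Ss$ of the field $c$, by a deterministic increasing function of the random variables $\xi^\e_{t,\g}$ and $\eta^\e_{t,\g}$, and then to invoke Lemma~\ref{le-tight}. Throughout $\omega$ is frozen, so that $W_\eps$, $\W_\eps$ and their derivatives are deterministic and $\E$ is the expectation over the Brownian motion only; the resulting bound is a function of $\omega$ whose $P$-tightness is exactly what we want. The starting point is the representation \eqref{intpart}, which gives $4Y^{\eps,x}_t=8[\W_\eps(t,X^x_t)-\W_\eps(0,x)-\int_0^t\frac{\partial\W_\e}{\partial s}(s,X^x_s)ds-\int_0^tW_\eps(s,X^x_s)dX^x_s]$. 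I would split the exponential of this into the corresponding four factors, pull out the deterministic factor $\exp(-8\W_\eps(0,x))$, and apply the generalized H\"older inequality with exponents $p_1,p_2,p_3$ satisfying $p_1^{-1}+p_2^{-1}+p_3^{-1}=1$ to the remaining three.

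Fix $\g\in(0,1/2)$. By the very definitions of $\xi^\e_{t,\g}$ and $\eta^\e_{t,\g}$ one has, uniformly in $s\le t$, $|W_\eps(s,y)|\le\xi^\e_{t,\g}(1+|y|)^{1-\g}$ and $|\frac{\partial W_\e}{\partial s}(s,y)|\le\eta^\e_{t,\g}(1+|y|)^{1-\g}$; integrating in $y$ yields $|\W_\eps(s,y)|\le(2-\g)^{-1}\xi^\e_{t,\g}(1+|y|)^{2-\g}$ and likewise for $\frac{\partial\W_\e}{\partial s}$. Writing $S:=\sup_{0\le s\le t}|X^x_s|$, the boundary factor $\exp(8\W_\eps(t,X^x_t))$ and the drift factor are thus dominated by $\exp(\kappa_1\xi^\e_{t,\g}(1+S)^{2-\g})$ and $\exp(\kappa_2\eta^\e_{t,\g}(1+S)^{2-\g})$, while the deterministic factor is bounded by $\exp(c_x\xi^\e_{t,\g})$ with $c_x$ depending only on $x$.

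The main obstacle is the stochastic integral factor. Setting $M_t:=-\int_0^tW_\eps(s,X^x_s)dX^x_s$, for frozen $\omega$ this is a continuous $\P$-martingale with quadratic variation $\langle M\rangle_t=\int_0^tW_\eps(s,X^x_s)^2ds\le t\,(\xi^\e_{t,\g})^2(1+S)^{2-2\g}$. Since $\exp(2\lambda M_t-2\lambda^2\langle M\rangle_t)$ is a nonnegative local martingale, hence a supermartingale of expectation at most $1$, Cauchy--Schwarz gives, with $\lambda=8p_3$,
$$\E[\exp(8p_3M_t)]\le\left(\E\big[\exp(2\lambda^2\langle M\rangle_t)\big]\right)^{1/2}\le\left(\E\big[\exp(\kappa_3(\xi^\e_{t,\g})^2(1+S)^{2-2\g})\big]\right)^{1/2}.$$
Consequently every one of the three H\"older factors has the form $\E[\exp(\kappa R(1+S)^{p})]$ with a power $p\in\{2-\g,\,2-2\g\}$ and $R\in\{\xi^\e_{t,\g},\eta^\e_{t,\g},(\xi^\e_{t,\g})^2\}$.

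The decisive point---and the reason $\g>0$ is needed---is that these powers satisfy $p<2$, so that the Gaussian tail bound $\P(S>a)\le Ce^{-a^2/(2t)}$ makes $G(R):=\E[\exp(\kappa R(1+S)^{p})]$ finite for every value of $R$ and continuous and increasing in $R$; for $\g=0$ the power would be exactly $2$ and this expectation would blow up for large random $R$. Assembling the three estimates, for each $\omega$ we obtain $Z^\e\le H(\xi^\e_{t,\g},\eta^\e_{t,\g})$ for a deterministic function $H$ that is continuous and increasing in each argument and finite $P$-a.s. Tightness then transfers immediately: given $\delta>0$, Lemma~\ref{le-tight} furnishes $K$ with $P(\xi^\e_{t,\g}>K)+P(\eta^\e_{t,\g}>K)<\delta$ for all $0<\e\le\e_0$, and on the complementary event $Z^\e\le H(K,K)=:M$, whence $P(Z^\e>M)<\delta$ uniformly in $\e$, which is the desired $P$-tightness.
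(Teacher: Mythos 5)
Your proof is correct and follows essentially the same route as the paper: the decomposition \eqref{intpart}, a H\"older splitting into the boundary, drift and stochastic-integral factors, the exponential supermartingale plus Cauchy--Schwarz bound for the martingale term, and the reduction to the tight families $\xi^\e_{t,\g},\eta^\e_{t,\g}$ of Lemma \ref{le-tight}, with the decisive observation that all exponents $2-\g$, $2-2\g$ are strictly below $2$. The only cosmetic difference is that you control everything by the running supremum $S=\sup_{s\le t}|X^x_s|$ and its Gaussian tail, where the paper instead applies Jensen's inequality in the time integral and the pointwise Gaussian estimate \eqref{expgauss}.
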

\bpf
We make use of the following easy estimate : if $Z$ is an $N(0,1)$ random variable, $c>0$ and $0<p<2$,
\begin{equation}\label{expgauss}
\E\exp(c|Z|^p)\le\sqrt{2}\exp\left[\frac{2-p}{2}(4c)^{\frac{2}{2-p}}\right].
\end{equation}
>From the identity (\ref{trickdef}) in the proof of Theorem \ref{conv-Y}, we deduce that
\begin{align*}
\frac{4}{\sqrt{\e}}\int_0^t c\left(s,\frac{X^x_s}{\e}\right)ds&=
8\W_\e(t,X^x_t)-8\W_\e(0,x)-8\int_0^t\frac{\partial \W_\e}{\partial s}(s,X^x_s)ds\\&\ -8\int_0^tW_\e(s,X^x_s)dB_s.
\end{align*}
Hence
\begin{align*}
\E\left(\exp\left[\frac{4}{\sqrt{\e}}\int_0^t c\left(s,\frac{X^x_s}{\e}\right)ds\right]\right)
&\le e^{-8\W_\e(0,x)}\left[\E\left(e^{24\W_\e(t,X^x_t)}\right)\right]^{1/3}\\ \times
\left[\E\left(e^{-24\int_0^t\frac{\partial\W_\e}{\partial s}(s,X^x_s)ds}\right)\right]&^{1/3}
\left[\E\left(e^{-24\int_0^t\W_\e(s,X^x_s)dB_s}\right)\right]^{1/3}.
\end{align*}
It remains to dominate each of the 4 factors of the right--hand side of the last identity by a tight sequence, which
we now do, with the help of Lemma \ref{le-tight}. Below $\gamma$ is an arbitrarily fixed number in the interval $(0,1/2)$.
Clearly,
$$
-8\W_\e(0,x)\le8|x|(1+|x|)^{1-\gamma}\xi^\eps_{0,\gamma},
$$
and the sequence on the right-hand side is tight as well as the sequence of the exponentials $\exp\big(8|x|(1+|x|)^{1-\gamma}\xi^\eps_{0,\gamma}\big)$.
Next
\begin{align*}
24\W_\e(t,x+B_t)&=24\int_0^{x+B_t}W_\e(t,y)dy\\
&\le24|x+B_t|(1+|x+B_t|)^{1-\gamma}\xi^\e_{t,\gamma}\\
&\le48[(1+|x|)^{2-\gamma}+|B_t|^{2-\gamma}]\xi^\e_{t,\gamma}.
\end{align*}
Hence from \eqref{expgauss},
$$\E\left(e^{24\W_\e(t,X^x_t)}\right)\le
\sqrt{2}\exp\left[48(1+|x|)^{2-\gamma}\xi^\e_{t,\gamma}\right]
\exp\left[\frac{\gamma}{2}\left(192\xi^\e_{t,\gamma} t^{1-\gamma/2}\right)^{2/\gamma}\right]
.$$
Similarly,
\begin{equation*}
-24\int_0^t\frac{\partial\W_\e}{\partial s}(s,X^x_s)ds\le
24\eta^\e_{t,\gamma}\int_0^t\left(|x+B_s|+\frac{1}{1-\gamma}|x+B_s|^{2-\gamma}\right)ds,
\end{equation*}
so using Jensen's inequality, we get
\begin{equation*}
\exp\left(-24\int_0^t\frac{\partial\W_\e}{\partial s}(s,X^x_s)ds\right)
\le\frac{1}{t}\int_0^t\exp\left[
24t\eta^\e_{t,\gamma}\left(|x+B_s|+\frac{1}{1-\gamma}|x+B_s|^{2-\gamma}\right)\right]ds,
\end{equation*}
from which the result follows as above. Next from Cauchy--Schwarz,
\begin{align*}
\E\exp\left(-24\int_0^t\W_\e(s,X^x_s)dB_s\right)&\le
\left[\E\exp\left(-48\int_0^t\W_\e(s,X^x_s)dB_s-1152 \int_0^tW_\e^2(s,X^x_s)ds\right)\right]^{1/2}\\
&\quad\times\left[\E\exp\left(1152 \int_0^tW_\e^2(s,X^x_s)ds\right)\right]^{1/2}
\\&\le\left[\E\exp \left(1152\int_0^tW_\e^2(s,X^x_s)ds\right)\right]^{1/2},
\end{align*}
but
$$\int_0^tW_\e^2(s,X^x_s)ds\le \left[\xi^\e_{t,\gamma}\right]^2
\int_0^t(1+|x+B_s|)^{2-2\gamma}ds,$$
and we estimate this term again using Jensen's inequality and the
inequality \eqref{expgauss}.
\epf

It now follows from Theorem \ref{conv-Y}, Propositions \ref{conv-W} and \ref{ui}, and the fact that by formula (\ref{intpart}) the exponent in the Feynman--Kac formula is a continuous function of $(W,W'_t)$, that we can apply  Corollary \ref{cor_crit}, yielding
\begin{theorem}\label{conv_alpha=0}
For any $(t,x)\in\R_+\times\R$,
$$u^\eps(t,x)\to u(t,x):=\E\left[g(X^x_t)\exp\left(
\int_0^t\int_\R L(ds,y-x) W(s,dy)\right)\right]$$
in $P$--law, as $\eps\to0$.
\end{theorem}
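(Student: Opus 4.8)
The plan is to recast $u^\eps(t,x)$ as $\E\Psi(\cdot,W^\eps)$ for a suitable functional $\Psi$ and then to invoke Corollary \ref{cor_crit}. Concretely, I would take $\X=C(\R_+\times\R;\R^2)$, set $W^\eps=(W_\eps,W'_\eps)$ and $W=(W,W')$, and for $w=(w_0,w_1)\in\X$ with spatial primitives $\mathbf{w}_i(t,x)=\int_0^x w_i(t,y)\,dy$ define
$$
\Psi(\omega,w)=g(X^x_t)\exp\Big(2\big[\mathbf{w}_0(t,X^x_t)-\mathbf{w}_0(0,x)-\int_0^t\mathbf{w}_1(s,X^x_s)\,ds-\int_0^t w_0(s,X^x_s)\,dX^x_s\big]\Big).
$$
The point of this choice is formula \eqref{intpart}: the It\^o expansion \eqref{trickdef}, rearranged, shows that evaluating this exponent at $w=W^\eps$ reproduces exactly $Y^{\eps,x}_t=\eps^{-1/2}\int_0^t c(s,X^x_s/\eps)\,ds$, so that $\E\Psi(\cdot,W^\eps)=u^\eps(t,x)$ is precisely the Feynman--Kac representation \eqref{fk}.

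It then remains to check the three hypotheses of Corollary \ref{cor_crit}. Condition (ii), the $\F$--measurability of $\omega\mapsto\Psi(\omega,w)$ for fixed $w$, is immediate, since $\Psi$ is assembled from the Brownian path $X^x$. For condition (i) I would use that $w\mapsto(\mathbf{w}_0,\mathbf{w}_1)$ is continuous on $\X$ together with the two continuity statements already recorded in the proof of Theorem \ref{conv-Y}, namely that $f\mapsto\int_0^t f(s,X^x_s)\,ds$ and $f\mapsto\int_0^t f(s,X^x_s)\,dX^x_s$ are continuous from $C(\R_+\times\R)$ into $L^1(\Omega,\F,\P)$ with the topology of convergence in probability; composing with the exponential and multiplying by the fixed factor $g(X^x_t)$ then yields continuity of $w\mapsto\Psi(\omega,w)$ in $\P$--probability. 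For condition (iii) I would take $\delta=1/3$, as flagged in the Remark, and use the H\"older bound displayed there to estimate $\E|\Psi|^{4/3}(\cdot,W^\eps)\le(\E g^2(X^x_t))^{2/3}(\E\exp(4Y^{\eps,x}_t))^{1/3}$; the first factor is a finite constant because $g\in L^2(\R)$, and tightness of the second family is exactly Proposition \ref{ui}, so the product is tight.

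With $W^\eps\Rightarrow W$ supplied by Proposition \ref{conv-W}, Corollary \ref{cor_crit} then gives $\E\Psi(\cdot,W^\eps)\Rightarrow\E\Psi(\cdot,W)$ in $P$--law. The last step is to identify $\E\Psi(\cdot,W)$ with $u(t,x)$. By Lemma \ref{ito} the bracketed quantity appearing in $\Psi(\cdot,W)$ equals $\tfrac12\int_0^t\int_\R L(ds,y-x)W(s,dy)$, so the exponent is $\int_0^t\int_\R L(ds,y-x)W(s,dy)=Y^x_t$ and hence $\E\Psi(\cdot,W)=\E[g(X^x_t)\exp(Y^x_t)]=u(t,x)$, which is the claimed limit.

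I expect the crux of the argument to be the continuity requirement (i). Written in its raw form $\int_0^t\frac{\partial W_\eps}{\partial x}(s,X^x_s)\,ds$, the exponent is not a continuous functional of $W_\eps$ in the uniform topology, since it involves a spatial derivative that uniform convergence does not control; it is precisely the integration--by--parts identity \eqref{intpart}, trading this derivative for the smoother objects $\mathbf{w}_0$, $\mathbf{w}_1$ and an It\^o integral, that makes the continuous--mapping mechanism of Corollary \ref{cor_crit} available. The delicate sub--point is that the continuity of the stochastic integral map is only an $L^1(\P)$--statement with convergence in probability, not an almost--sure one, since the integral is defined only up to null sets depending on the integrand — which is exactly why the criterion of Corollary \ref{cor_crit} is phrased with a probabilistic notion of continuity rather than a pointwise one.
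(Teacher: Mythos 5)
Your proposal is correct and follows essentially the same route as the paper: the paper's proof is exactly the one-line assembly of Corollary \ref{cor_crit} (with $\delta=1/3$ and the H\"older bound from the Remark), Proposition \ref{conv-W} for $W^\eps\Rightarrow W$, the integration-by-parts identity \eqref{intpart} to make the exponent a continuous functional of $(W_\eps,W'_\eps)$, Proposition \ref{ui} for tightness, and Lemma \ref{ito} (via Theorem \ref{conv-Y}) to identify the limit as $\int_0^t\int_\R L(ds,y-x)\,W(s,dy)$. Your writeup simply makes explicit the functional $\Psi$ and the verification of conditions (i)--(iii), which is precisely what the paper leaves implicit.
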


\begin{remark} Note that it is not clear how the limiting exponent in the Feynman--Kac formula
could be written
in terms of $W$ and $B$.
\end{remark}

The corresponding limiting SPDE reads
\begin{equation*}
\left\{
\begin{aligned}
\frac{\partial u}{\partial t}(t,x)&
=\frac{1}{2}\frac{\partial^2 u}{\partial x^2}(t,x)dt
+u(t,x)\circ W(t,dx),
\quad t\ge0,\, x\in\R;\\
u(0,x)&=g(x),\quad x\in\R,
\end{aligned}
\right.
\end{equation*}
where the stochastic integral should be interpreted as an anticipative Strato\-no\-vich integral, see \cite{nualart}, \cite{nupa}. Since
anticipating stochastic integrals are not very easy to handle, we
prefer to rewrite the above SPDE as follows, using the same trick as in \cite{papi_narvik}. We note that $u(t,x)\circ W(t,dx)$ is a convenient notation for the product
$$u(t,x)\frac{\partial W}{\partial x}(t,x)=\frac{\partial (uW)}{\partial x}(t,x)-\frac{\partial u}{\partial x}(t,x)W(t,x).$$
Hence we rewrite the above SPDE in the form
\begin{equation}
\left\{
\begin{aligned}
\frac{\partial u}{\partial t}(t,x)&
=\frac{1}{2}\frac{\partial^2 u}{\partial x^2}(t,x)
+\frac{\partial (uW)}{\partial x}(t,x)
-\frac{\partial u}{\partial x}(t,x)W(t,x),
\quad t\ge0,\, x\in\R;\\
u(0,x)&=g(x),\quad x\in\R.
\end{aligned}
\right.
\end{equation}

\section{The case $0\le2\beta\le\alpha$, $\alpha>0$}

We first prove two Propositions which will be useful in two of
the three following subcases.

We first recall the definition of the uniform mixing coefficient $\alpha_{\rm um}(r)$
of the random field $c(t,x)$. For a set $A\subset
\R^2$ denote by $\F_A$ the $\sigma$-algebra generated by
$\{c(t,x)\,:\,(t,x)\in A\}$. We set
$$
\alpha_{\rm um}(r)=\sup\limits_{\begin{array}{cc}{\scriptstyle A_1,A_2\subset \R^2}\\
{\scriptstyle\mathrm{dist}(A_1,A_2)\ge r}\end{array}}
\sup\limits_{\begin{array}{cc}{\scriptstyle {\cal S}_1\in\F_{A_1}}\\
{\scriptstyle {\cal S}_2\in\F_{A_2},\ P({\cal S}_2)\not=0}\end{array}}|P({\cal
S}_1|{\cal S}_2)-P({\cal S}_1)|.
$$
Next we recall the definition of the maximum
correlation coefficient $\rho(r)$~:
$$
\rho_{\rm mc}(r)=
\sup\limits_{\begin{array}{cc}{\scriptstyle A_1,A_2\subset R^2}\\
{\scriptstyle\mathrm{dist}(A_1,A_2)\ge r}\end{array}}
\sup\limits_{\xi,\eta}\ |E(\xi\eta)|,
$$
where the second supremum is taken over all $\F_{A_1}$-measurable
$\xi$ and $\F_{A_2}$-measurable $\eta$ such that $E\xi\, E\eta =0$,
$|\xi|\le 1,\ |\eta|\le1$.

We shall assume in this section that there exists $C,\ \delta>0$ such that

\begin{equation*}
{ \alpha_{\rm um}(r)\le C(1+r)^{-(2+\delta)}.}\leqno{(H_{\rm um})}
\end{equation*}

Proposition 7.2.2, page 346 of \cite{EtKu}, with, using the notations there,
$s=\infty$, $r=1$, $p=\infty$ and $q=1$, yields the
\begin{lemma}\label{lemEK}
It follows from $(H_{\rm um})$ that for some constant $C'$,
$$\rho_{\rm mc}(r)\le C'(1+r)^{-(2+\delta)},$$
and in particular $\rho_{\rm mc}\in L^1(\R_+)$.
\end{lemma}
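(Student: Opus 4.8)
The plan is to establish the pointwise bound $\rho_{\rm mc}(r)\le 2\,\alpha_{\rm um}(r)$, which is \emph{linear} in the uniform mixing coefficient; the stated decay rate and the integrability then follow at once by inserting $(H_{\rm um})$. This linear control is exactly what the cited Proposition 7.2.2 of \cite{EtKu} provides for the endpoint parameter choice $s=\infty,\ r=1,\ p=\infty,\ q=1$. The point I would stress is that these endpoint exponents produce a bound of the \emph{first} power of $\alpha_{\rm um}$, rather than the cruder square-root bound $\rho_{\rm mc}\le 2\,\alpha_{\rm um}^{1/2}$ that a naive interpolation would give; preserving the full exponent $2+\delta$ is precisely what makes $\rho_{\rm mc}\in L^1(\R_+)$ possible, since $(1+r)^{-(2+\delta)/2}$ need not be integrable while $(1+r)^{-(2+\delta)}$ always is.

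To carry this out directly, I would fix $A_1,A_2\subset\R^2$ with $\mathrm{dist}(A_1,A_2)\ge r$ and take $\xi$ $\F_{A_1}$-measurable, $\eta$ $\F_{A_2}$-measurable, with $|\xi|\le1$, $|\eta|\le1$ and $E\xi\,E\eta=0$. Since $E\xi\,E\eta=0$ forces one of the two means to vanish, I may assume by symmetry of the definition that $E\eta=0$, whence $E(\xi\eta)=E\big(\xi\,E(\eta\mid\F_{A_1})\big)$ and therefore $|E(\xi\eta)|\le\big\|E(\eta\mid\F_{A_1})\big\|_{L^1(P)}$.

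The key step is the estimate $\big\|E(\eta\mid\F_{A_1})\big\|_{L^1(P)}\le 2\,\alpha_{\rm um}(r)$. I would prove it by first showing, for a single event $S_1\in\F_{A_1}$ with $P(S_1)>0$, that $\big|E(\eta\mid S_1)-E\eta\big|\le 2\,\alpha_{\rm um}(r)$: approximating $\eta$ by simple $\F_{A_2}$-measurable functions and grouping the positive and negative contributions of $P(\,\cdot\mid S_1)-P(\,\cdot\,)$ into two events of $\F_{A_2}$ reduces the sum of absolute differences to $2\sup_{S_2\in\F_{A_2}}|P(S_2\mid S_1)-P(S_2)|\le 2\,\alpha_{\rm um}(r)$ (here one uses that the outer supremum defining $\alpha_{\rm um}$ is symmetric in the labelling of $A_1,A_2$, so conditioning on $S_1$ is equally admissible). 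Applying this with $S_1=\{E(\eta\mid\F_{A_1})>0\}$ and its complement then yields the claimed $L^1$ bound, and hence $|E(\xi\eta)|\le 2\,\alpha_{\rm um}(r)$. Taking the supremum over all admissible $\xi,\eta$ and over all $A_1,A_2$ gives $\rho_{\rm mc}(r)\le 2\,\alpha_{\rm um}(r)$.

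Finally, inserting $(H_{\rm um})$ gives $\rho_{\rm mc}(r)\le 2C(1+r)^{-(2+\delta)}$, so $C'=2C$ works, and since $2+\delta>1$ we have $\int_0^\infty(1+r)^{-(2+\delta)}\,dr<\infty$, proving $\rho_{\rm mc}\in L^1(\R_+)$. The only genuinely delicate point is obtaining a bound of exact order $\alpha_{\rm um}$ (the endpoint $p=\infty,\ q=1$); the conditional-expectation estimate and the final substitution are routine bookkeeping.
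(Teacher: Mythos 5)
Your proof is correct, but it follows a genuinely different route from the paper's: the paper's entire proof of Lemma \ref{lemEK} is a one-line citation of Proposition 7.2.2, page 346, of \cite{EtKu} with the parameter choice $s=\infty$, $r=1$, $p=\infty$, $q=1$, which is exactly the endpoint covariance inequality that you reprove from scratch. Your self-contained argument --- reducing $|E(\xi\eta)|$ to $\|E(\eta\mid\F_{A_1})\|_{L^1(P)}$, bounding $|E(\eta\mid S_1)-E\eta|\le 2\alpha_{\rm um}(r)$ by approximating $\eta$ by simple functions and grouping the positive and negative parts of $P(\,\cdot\mid S_1)-P(\,\cdot\,)$ into two events of $\F_{A_2}$, then choosing $S_1=\{E(\eta\mid\F_{A_1})>0\}$ and its complement --- is a standard and valid derivation of $\rho_{\rm mc}(r)\le 2\,\alpha_{\rm um}(r)$, and you correctly isolate the one point needing care, namely that the symmetry of the supremum defining $\alpha_{\rm um}$ in the pair $(A_1,A_2)$ makes conditioning on $\F_{A_1}$-events admissible. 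What your approach buys is transparency: it shows that the linear (rather than square-root) dependence on $\alpha_{\rm um}$ is available precisely because $\rho_{\rm mc}$ is defined here with sup-norm-bounded variables rather than $L^2$-normalized ones, so no interpolation loss occurs; what the paper's approach buys is brevity, at the cost of hiding this mechanism inside a black-box reference.

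One side remark of yours is factually wrong, although it does not affect the validity of the proof: since $(2+\delta)/2=1+\delta/2>1$ for every $\delta>0$, the function $(1+r)^{-(2+\delta)/2}$ \emph{is} integrable on $\R_+$ (this is consistent with the paper's remark following Lemma \ref{l_uni_inte}, which observes that the weaker decay $\alpha_{\rm um}(r)\le C(1+r)^{-(1+\delta/2)}$ already implies $\rho_{\rm mc}\in L^1(\R_+)$). The linear control in $\alpha_{\rm um}$ is genuinely needed to obtain the stated decay exponent $2+\delta$ in the conclusion of the Lemma, but not for the integrability assertion, which a square-root bound would also deliver.
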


An immediate consequence of the Lemma is the
\begin{corollary}\label{corEK}
There exists a constant $C$ such that for all $t\ge0$, $x\in\R$,
$$|\Phi(t,x)|\le C(1+t+|x|)^{-(2+\delta)}.$$
\end{corollary}

Recall the function $\Phi$ defined in (\ref{defofphi}).
It will be convenient in the sequel to use the fact that
there exists a bounded function
$\Psi~:\R_+\times\R\to\R_+$ such that
$$
|\Phi(s,x)|\le \Psi(s,x),
$$
$x\,\to\,\Psi(s,x)$ is decreasing on $\R_+$ for all
$s\in\R_+$, $\Psi(s,-x)=\Psi(s,x)$, and

\begin{equation}\label{cov_ubou}
\int_0^\infty \Psi(t,0)dt<\infty;\quad
\int_0^\infty \Psi(t,x)dt\,\to\,0, \ \text{ as }|x|\to\infty.
\end{equation}
For example,  we might set (for $x>0$)
$$
\Psi(t,x)=\sup\limits_{\begin{array}{c}\\[-7mm] \scriptstyle s\ge t\\[-2mm]
\scriptstyle |y|\ge x\end{array}} |\Phi(s,y)|.
$$
In this case, (\ref{cov_ubou}) follows from our standing
assumption $(H_{\rm um})$, see Corollary \ref{corEK}.

Whithout loss of generality, we assume that $\alpha=1$. Hence we want to
treat the case $0\le\beta\le1/2$.
The exponent in the Feynman--Kac formula reads
$$Y^\e_t=\frac{1}{\sqrt{\e}}\int_0^t
c\left(\frac{s}{\e},\frac{x+B_s}{\e^\beta}\right)ds.$$
Let us first prove the
\begin{proposition}\label{prop-tcl}
Assume that the condition $(H_{\rm um})$ holds. Then for all $0\le\beta\le1/2$,
the limit relation holds in $\P$--probability
\begin{equation}\label{Pconvlaw}
\lim\limits_{\eps\to 0}E\exp(Y_t^{\eps,x})=\exp(t\Sigma)
\end{equation}
with
\begin{equation}\label{def_sigg}
\Sigma(\beta)=\begin{cases}\displaystyle
\int_{-\infty}^{+\infty}\Phi(u,0)du,&\text{if $0\le\beta<1/2$},\\[4mm]
\displaystyle
\int_{-\infty}^{+\infty}\E\Phi(u,B_u)du,&\text{if $\beta=1/2$}.
\end{cases}
\end{equation}
\end{proposition}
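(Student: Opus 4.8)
The plan is to argue conditionally on the Brownian path $B$. For a frozen path, $Y^{\eps,x}_t$ is, by \eqref{c-centered}, a centered additive functional of the mixing field $c$ sampled along the curve $s\mapsto(s/\eps,(x+B_s)/\eps^\beta)$, and $E\exp(Y^{\eps,x}_t)$ is just its Laplace transform under $P$. I would reduce the proof to two statements: first, an asymptotic normality estimate saying that under $P$
\[
E\exp(Y^{\eps,x}_t)=\exp\Big(\tfrac12 V^\eps(B)+o(1)\Big),\qquad
V^\eps(B):=E\big[(Y^{\eps,x}_t)^2\big];
\]
and second, that the (random) conditional variance $V^\eps(B)$ converges, in $\P$--probability, to the deterministic constant identified in \eqref{def_sigg}. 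Together these give \eqref{Pconvlaw}.

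For the asymptotic normality I would pass through the cumulants of $Y^{\eps,x}_t$ under $P$. The first cumulant vanishes by \eqref{c-centered}, the second is $V^\eps(B)$, and for $k\ge3$ the $k$-th joint cumulant of $c$ at the rescaled times $s_1/\eps,\dots,s_k/\eps$ is, by the uniform mixing $(H_{\rm um})$ and Lemma \ref{lemEK} (which gives $\rho_{\rm mc}\in L^1$), concentrated where all the $s_j$ lie within $O(\eps)$ of one another. Hence the $k$-th cumulant of $Y^{\eps,x}_t$ scales like $\eps^{-k/2}\cdot\eps^{\,k-1}=\eps^{\,k/2-1}\to0$ for $k\ge3$, the implied constants being controlled by the uniform bound on $c$ and the summability of the mixing rate; equivalently one may run a blocking argument, cutting $[0,t]$ into blocks of length $\ell$ separated by gaps $g$ with $\eps\ll g\ll\ell\ll1$, so that the blocks decorrelate under $(H_{\rm um})$ and each block contributes $1+\tfrac12 E[Y_i^2]+o(E[Y_i^2])$. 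Either way $\log E\exp(Y^{\eps,x}_t)=\sum_{k\ge2}\kappa_k/k!=\tfrac12 V^\eps(B)+o(1)$, uniformly enough in $B$, which is the Gaussian approximation above and simultaneously supplies the uniform exponential integrability needed to take the limit.

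It then remains to compute $\lim_\eps V^\eps(B)$. By stationarity and \eqref{defofphi}, after the substitution $s=r+\eps u$,
\[
V^\eps(B)=\frac1\eps\int_0^t\!\!\int_0^t
\Phi\Big(\frac{s-r}\eps,\frac{B_s-B_r}{\eps^\beta}\Big)\,ds\,dr
=\int_0^t\!dr\int
\Phi\Big(u,\frac{B_{r+\eps u}-B_r}{\eps^\beta}\Big)\,du .
\]
For $0\le\beta<1/2$ the spatial argument $\eps^{-\beta}(B_{r+\eps u}-B_r)$ tends to $0$ on the effective range of $u$, so the bound $|\Phi|\le\Psi$ from \eqref{cov_ubou} together with Corollary \ref{corEK} justify dominated convergence and give $V^\eps(B)\to t\int_\R\Phi(u,0)\,du$, $\P$--almost surely. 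For $\beta=1/2$ the rescaled increment $u\mapsto\eps^{-1/2}(B_{r+\eps u}-B_r)$ is, by Brownian scaling, a two--sided Brownian motion, and I would establish self--averaging in $r$: its $\P$--mean is exactly $\int_0^t dr\int\E\Phi(u,B_u)\,du=t\int_\R\E\Phi(u,B_u)\,du$, since $\eps^{-1/2}(B_{r+\eps u}-B_r)\sim N(0,|u|)$, while its $\P$--variance tends to $0$ because increments over disjoint $\eps$--windows decorrelate and $\Phi$ is integrable by Corollary \ref{corEK}. Thus in both cases $V^\eps(B)$ converges in $L^1(\P)$, hence in $\P$--probability, to the constant $\Sigma$ of \eqref{def_sigg} times $t$, and \eqref{Pconvlaw} follows from the Gaussian approximation.

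The main obstacle is the borderline case $\beta=1/2$. There the temporal oscillation scale $\eps$, the spatial oscillation scale $\eps^{1/2}$, and the Brownian diffusion scale all coincide, so the medium randomness and the Brownian fluctuations are genuinely coupled: one can no longer freeze the spatial argument of $\Phi$ and must instead prove that the random variance $V^\eps(B)$ self--averages to a constant, which is exactly where the decay of $\Phi$ from \eqref{cov_ubou} and Corollary \ref{corEK} is used, both to make $\int_\R\E\Phi(u,B_u)\,du$ finite and to kill the $\P$--variance. A secondary difficulty is making the Gaussian approximation uniform enough in $B$ that the $o(1)$ term is genuinely negligible; here the uniform boundedness of $c$ and the polynomial rate in $(H_{\rm um})$ are precisely what render the higher cumulants (or the per--block Taylor expansion and inter--block decorrelation errors) summable.
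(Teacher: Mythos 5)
Your proposal is correct in substance and, in its ``blocking'' variant, is essentially the paper's own proof: the paper partitions the rescaled time interval $(0,t/\eps)$ into long blocks ${\cal I}^\eps_j$ of length $\eps^{-1/3}$ separated by gaps ${\cal J}^\eps_j$ of length $\eps^{-\nu}$, uses iterated conditioning with $\alpha_{\rm um}$ to factor $E\exp(Y^\eps_e)$ into a product over blocks, Taylor-expands each factor keeping only the second moment $\Xi^\eps_j=\frac{\eps}{2}\int\!\!\int_{{\cal I}^\eps_j}\Phi(r-s,\tilde B_r-\tilde B_s)\,dr\,ds$, and then --- this is exactly your ``self-averaging of $V^\eps(B)$'' step --- invokes the weak law of large numbers for the i.i.d.\ Brownian functionals $\Xi^\eps_j$ to get the deterministic constant $t\Sigma$; your Chebyshev/decorrelation argument for killing the $\P$--variance of $V^\eps(B)$ is the same mechanism in different clothing, and your computation of the $\P$--mean via $\eps^{-1/2}(B_{r+\eps u}-B_r)\sim N(0,|u|)$ matches the paper's identification $\mathbb E\,\Xi^\eps_j=\Sigma(1/2)\eps^{2/3}+O(\eps)$. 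The one caveat concerns your headline route via global cumulants: since $|Y^{\eps,x}_t|$ is of order $\eps^{-1/2}$, writing $\log E\exp(Y^{\eps,x}_t)=\sum_{k\ge2}\kappa_k/k!$ and discarding $k\ge3$ is not automatic --- under polynomial mixing such as $(H_{\rm um})$ the standard cumulant bounds carry factors like $(k!)^{1+\gamma}$, so the series evaluated at argument $1$ need not be absolutely summable, and uniformity in the frozen path $B$ must also be checked. This is precisely why the paper works with a \emph{finite} Taylor expansion on short blocks where $|\eta^\eps_j|\le C\eps^{1/6}$ (so the remainder is controlled pathwise), and why it needs the auxiliary steps you gloss over: showing the gap contribution ${\cal Y}^\eps\to0$, recombining the even and odd block families via mixing, and using the uniform exponential integrability of Lemma \ref{l_uni_inte} to pass these approximations through the exponential. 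Since you explicitly offer the blocking argument as an equivalent implementation, your outline is sound; but as stated, the cumulant version is the one place where the argument would need genuine additional work.
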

\bpf
We only consider the case $\beta=1/2$, for $\beta\in(0,1/2)$ the desired statement can be justified in the same way with some simplifications.

We introduce a partition of the interval $(0,t/\eps)$ into alternating subintervals
of the form
$$
{\cal I}^\eps_j=\big((\eps^{-1/3}+\eps^{-\nu})j,\ (\eps^{-1/3}+\eps^{-\nu})j+\eps^{-1/3}\big),\quad j=1,2,\dots,K^\eps,
$$
$$
{\cal J}^\eps_j=\big((\eps^{-1/3}+\eps^{-\nu})j+\eps^{-1/3},\ (\eps^{-1/3}+\eps^{-\nu})(j+1)\big),\quad j=1,2,\dots, K^\eps;
$$
here $K^\eps=[(\eps^{-1}t)/(\eps^{-1/3}+\eps^{-\nu})]$, $[\cdot]$ stands for the integer part, and $0<\nu<1/3$. This implies that $K^\eps= t\eps^{-2/3}(1+o(1))$.
Denote
$$
\eta_j^\eps=\sqrt{\eps}\int\limits_{{\cal I}_j^\eps} c\big(s,\frac{x}{\sqrt{\eps}}+\tilde B_s)\,ds,\qquad \zeta_j^\eps=\sqrt{\eps}\int\limits_{{\cal J}_j^\eps} c\big(s,\frac{x}{\sqrt{\eps}}+\tilde B_s)\,ds
$$
where the new Wiener process $\tilde{B}_s$ has been obtained from the original one by the scaling $\sqrt{\eps}\tilde{B}_{s/\eps}=B_s$. We may assume without loss of generality that the process $\tilde B_s$ is fixed.
Then
$$
Y_t^{\eps,x}=\sum\limits_{j=0}^{K^\eps} (\eta_j^\eps+\zeta_j^\eps) +V_\eps,
$$
where $|V_\eps|\le C\eps^{1/3}$ $P\times\mathbb P$-a.s.

Notice that, due to the standing assumptions on $c(s,x)$, there exists a constant $C$ such that
\begin{equation}\label{est_eta-zeta}
|\eta_j^\eps|\le C\eps^{1/6},\qquad |\zeta_j^\eps|\le C\eps^{1/2-\nu}.
\end{equation}

To use efficiently the mixing properties of the coefficients it is convenient to represent $Y_t^{\eps,x}$ as follows
$$
Y_t^{\eps,x}=\sum\limits_{j {\rm\,is\, even}}\eta_j^\eps+\sum\limits_{j {\rm\,is\, odd}}\eta_j^\eps+\sum\limits_{j=0}^{K^\eps}\zeta_j^\eps\ :=Y_e^\eps+Y_o^\eps+{\cal Y}^\eps.
$$
First, let us compute the limit of $E\exp(Y_e^\eps)$. For the sake of definiteness we may assume that $K^\eps$ is odd. The case of even $K^\eps$ can be treated in exactly the same way.
Using the notation ${\cal A}_j^\eps=\sigma\{c(s,x)\,:\, s\le (\eps^{-1/3}+\eps^{-\nu})j,\ x\in \mathbb R\}$, we have
$$
E\exp(Y_e^\eps)=E\exp\Big(\sum\limits_{j=0}^{(K^\eps-1)/2}\eta_{2j}^\eps\Big)
=E\bigg(E\Big\{\exp\Big(\sum\limits_{j=0}^{(K^\eps-1)/2}\eta_{2j}^\eps\Big)\Big|{\cal A}_{(K^\eps-2)}^\eps\Big\}\bigg)
$$
$$
=E\bigg(\exp\Big(\sum\limits_{j=0}^{(K^\eps-3)/2}\eta_{2j}^\eps\Big)E\big\{\exp( \eta_{K^\eps-1}^\eps)\big|{\cal A}_{(K^\eps-2)}^\eps\big\}\bigg)
$$
$$
=E\bigg(\exp\Big(\sum\limits_{j=0}^{(K^\eps-3)/2}\eta_{2j}^\eps\Big) \big[E\exp(\eta_{K^\eps-1}^\eps)+E\big\{ (\exp(\eta_{K^\eps-1}^\eps)-E\exp(\eta_{K^\eps-1}^\eps))\big|{\cal A}_{(K^\eps-2)}^\eps\big\}\big]\bigg).
$$
Since, according to (\ref{est_eta-zeta}), $|\exp(\eta_{K^\eps-1}^\eps)-E\exp(\eta_{K^\eps-1}^\eps)|\le C\eps^{1/6}$, then, by Proposition 2.6 page 349 in \cite{EtKu}, we have
$$
\big|E\big\{ (\exp(\eta_{K^\eps-1}^\eps)-E\exp(\eta_{K^\eps-1}^\eps))\big|{\cal A}_{(K^\eps-2)}^\eps\big\}\big|\le C\alpha_{\rm um}(\eps^{-1/3})\eps^{1/6}\le C\eps^{(2+\delta)/3+1/6}
$$
Combining this estimate with the evident bound $1/2\le E\exp(\eta_{K^\eps-1}^\eps)\le 2$, we obtain
$$
E\exp(Y_e^\eps)=E\bigg(\exp\Big(\sum\limits_{j=0}^{(K^\eps-3)/2}\eta_{2j}^\eps\Big) E\exp(\eta_{K^\eps-1}^\eps)\big(1+O(\eps^{(2+\delta)/3+1/6})\big)\bigg)
$$
with $|O(\eps^{(2+\delta)/3+1/6})|\le C\eps^{(2+\delta)/3+1/6}$.
Iterating this process, after $K^\eps/2$ steps we arrive at the equality
$$
E\exp(Y_e^\eps)=\prod\limits_{j=0}^{(K^\eps-1)/2} E\exp(\eta_{2j}^\eps)\big(1+O(\eps^{(2+\delta)/3+1/6})\big).
$$
Since $\prod\limits_{j=0}^{(K^\eps-1)/2}\big(1+O(\eps^{(2+\delta)/3+1/6})\big)$ converges to $1$ as $\eps\to0$, we have
\begin{equation}\label{aux_pro01}
\lim\limits_{\eps\to0}E\exp(Y_e^\eps)=
\lim\limits_{\eps\to0}\prod\limits_{j=0}^{(K^\eps-1)/2}E\exp(\eta_{2j}^\eps)
\end{equation}
We proceed with estimating the term $E\exp(\eta_j^\eps)$. Using Taylor expansion of the exponent about zero results in the following relation
\begin{equation}\label{aux_pro02}
E\exp(\eta_j^\eps)=1+E\eta_j^\eps+\frac{1}{2}E\big((\eta_j^\eps)^2\big)+ \frac{1}{6}E\big((\eta_j^\eps)^3\big)+ \frac{1}{24}E\big((\eta_j^\eps)^4\big)+O(\eps^{5/6}),
\end{equation}
here we have also used the bound $|\eta_j^\eps|\le C\eps^{1/6}$. By the centering condition on $c(\cdot)$, $E\eta_j^\eps=0$. Considering  $\lambda_j^{\eps}$ defined in the proof of Lemma \ref{l_uni_inte} below in the particular case $\gamma=1/3$, $\nu=0$, we have that  $\eta_j^\eps$ and $\lambda_j^{\sqrt{\eps}}$ have the same law.
It then follows from (\ref{vst_3}) that
\begin{equation}\label{aux_pro03}
\frac{1}{6}\big|E\big((\eta_j^\eps)^3\big)\big|\le C\eps^{5/6},\qquad \frac{1}{24}E\big((\eta_j^\eps)^4\big)\le C\eps.
\end{equation}

The contribution of the term $\frac{1}{2}E\big((\eta_j^\eps)^2\big)$ can be computed as follows
\begin{equation}\label{aux_pro04}
\frac{1}{2}E\big((\eta_j^\eps)^2\big)=\frac{\eps}{2}\int\limits_{{\cal I}^\eps_j} \int\limits_{{\cal I}^\eps_j}E\big\{c\big(r,\frac{x}{\sqrt{\eps}}+\tilde B_r) c\big(s,\frac{x}{\sqrt{\eps}}+\tilde B_s)\big\}\,dsdr
\end{equation}
$$
=\frac{\eps}{2}\int\limits_{{\cal I}^\eps_j} \int\limits_{{\cal I}^\eps_j}\Phi(r-s,\tilde B_r-\tilde B_s)\,drds\,:=\Xi^\eps_j.
$$
By definition and due to the properties of the Wiener process, the random variables $\Xi^\eps_j=\Xi^\eps_j(\omega)$, $j=1,2\dots,K^\eps$, are independent, identically distributed and satisfy
the following bounds
\begin{equation}\label{aux_pro05}
C_0\eps^{2/3}\le \Xi^\eps_j\le C_1\eps^{2/3}, \qquad \mathbb E\Xi^\eps_j=\Sigma(1/2)\eps^{2/3}+O(\eps)
\end{equation}
with $0<C_0<C_1<\infty$ and $|O(\eps)|\le C_2\eps$; the quantity $\Sigma(1/2)$ has been defined in (\ref{def_sigg}). Combining (\ref{aux_pro01})--(\ref{aux_pro05}) yields
\begin{equation}\label{aux_pro06}
\lim\limits_{\eps\to0}E\exp(Y_e^\eps)= \lim\limits_{\eps\to0}\prod\limits_{j=0}^{(K^\eps-1)/2}(1+\Xi^\eps_j+O(\eps^{5/6}))
\end{equation}
$$
=\exp\Big(\lim\limits_{\eps\to0}\sum\limits_{j=0}^{(K^\eps-1)/2}\Xi^\eps_j\Big)
=\exp\big(\lim\limits_{\eps\to0}[(K^\eps/2)\,\mathbb E\Xi^\eps_j]\big)=\exp\Big(\frac{t\Sigma(1/2)}{2}\Big)
$$
in $\mathbb P$ probability, from the weak law of large numbers.
Similarly
\begin{equation}\label{aux_pro07}
\lim\limits_{\eps\to0}E\exp(Y_o^\eps)=\exp\Big(\frac{t\Sigma(1/2)}{2}\Big).
\end{equation}
Exploiting exactly the same arguments one can show that
$$
\lim\limits_{\eps\to0}E\exp({\cal Y}^\eps)=1
$$
in $\mathbb P$ probability.
In view of the strict convexity and the strict positivity for $x\not=0$ of the function
$\varphi(x)=e^x-1-x$, this implies that, as $\eps\to0$,
\begin{equation}\label{aux_pro08}
{\cal Y}^\eps\to0\ \text{ in }\mathbb{P}\times P\text{ probability.}
\end{equation}
Following the line of the proof of estimate (\ref{vst_0}) in Lemma \ref{l_uni_inte}
below, one can show that
$$
E\exp(4Y_{e,o}^\eps)\le C,\qquad E\exp(4{\cal Y}^\eps) \le C
$$
with a deterministic constant $C$. Thanks to these bounds we deduce from (\ref{aux_pro08}) that
\begin{equation}\label{aux_pro09}
\lim\limits_{\eps\to0}E\exp(Y_e^\eps+Y_o^\eps+{\cal Y}^\eps)= \lim\limits_{\eps\to0}E\exp(Y_e^\eps+Y_o^\eps)
\end{equation}
in $\mathbb P$ probability.

Denote
$$
{\cal A}^\eps_e=\sigma\Big\{c(s,x)\,:\,s\in \bigcup\limits_{j=0}^{(K^\eps-1)/2} {\cal I}_{2j}^\eps, \ x\in\mathbb R\Big\},
$$
By construction,
$$
{\rm dist}\Big(\bigcup\limits_{j=0}^{(K^\eps-1)/2} {\cal I}_{2j}^\eps, \bigcup\limits_{j=0}^{(K^\eps-1)/2} {\cal I}_{2j+1}^\eps\Big)=\eps^{-\nu}.
$$
Therefore,
$$
E\exp(Y_e^\eps+Y_o^\eps)=E\big\{\exp(Y_e^\eps)E\big(\exp(Y_o^\eps)|{\cal A}^\eps_e\big)\big\}=
$$
$$
E\{\exp(Y_e^\eps)\}\,E\{\exp(Y_e^\eps)\}+o(\eps^{2\nu})
$$
and
$$
\lim\limits_{\eps\to0}E\exp(Y_t^{\eps,x})= \lim\limits_{\eps\to0}E\{\exp(Y_e^\eps)\}\, \lim\limits_{\eps\to0}E\{\exp(Y_e^\eps)\}=\exp(t\Sigma),
$$
as required.
\epf

Now, consider the process $\exp\big(Y_t^{\eps,x}(\omega)\big) \exp\big(Y_t^{\eps,x}(\omega_1)\big)$ defined on the product space $\Omega\times\Omega$ with the product measure $\mathbb P\times\mathbb P$.

\begin{proposition}\label{prop-tcl-prod}
Assume that the condition $(H_{\rm um})$ holds. Then for all $0\le\beta\le1/2$,
the limit relation holds in $\P\times\P$--probability
\begin{equation}\label{Pconvlaw-prod}
\lim\limits_{\eps\to 0}E\big\{\exp\big(Y_t^{\eps,x}(\omega)\big) \exp\big(Y_t^{\eps,x}(\omega_1)\big)\big\}=\exp(2t\Sigma)
\end{equation}
\end{proposition}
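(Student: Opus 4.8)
The plan is to mimic the proof of Proposition \ref{prop-tcl}, now applied to the ``doubled'' exponent $Y^{\eps,x}_t(\omega)+Y^{\eps,x}_t(\omega_1)$, the only genuinely new ingredient being a spatial decorrelation estimate showing that the interaction of the two independent paths is asymptotically negligible. As in Proposition \ref{prop-tcl} I shall treat the main case $\beta=1/2$, the cases $0\le\beta<1/2$ being analogous with the simplifications already indicated there. After the scaling $\sqrt\eps\,\tilde B_{s/\eps}=B_s$, fixing two independent driving Brownian motions $\tilde B(\omega)$ and $\tilde B(\omega_1)$, one has
\[
Y^{\eps,x}_t(\omega)+Y^{\eps,x}_t(\omega_1)=\sqrt\eps\int_0^{t/\eps}\Big[c\big(u,\tfrac{x}{\sqrt\eps}+\tilde B_u(\omega)\big)+c\big(u,\tfrac{x}{\sqrt\eps}+\tilde B_u(\omega_1)\big)\Big]\,du .
\]
Since both copies integrate the field over the same time interval $[0,t/\eps]$, the same alternating partition into blocks ${\cal I}^\eps_j$ (length $\eps^{-1/3}$) and spacers ${\cal J}^\eps_j$ (length $\eps^{-\nu}$) applies, and for each copy one defines $\eta^\eps_j(\cdot),\zeta^\eps_j(\cdot)$ exactly as before.

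First I would discard the spacers: arguing as in (\ref{aux_pro08})--(\ref{aux_pro09}) for each copy separately, the spacer sums tend to $0$ in $\P\times\P$--probability, and the uniform exponential bounds $E\exp(4(\cdot))\le C$ obtained in the proof of Proposition \ref{prop-tcl} permit replacing $\exp(Y(\omega)+Y(\omega_1))$ by its spacer-free version. The time--mixing factorization is then unchanged: the block quantity $\eta^\eps_j(\omega)+\eta^\eps_j(\omega_1)$ is measurable with respect to the slab ${\cal I}^\eps_j\times\R$, consecutive same-parity slabs are separated in time by at least $\eps^{-1/3}$ and the even and odd families by $\eps^{-\nu}$; as $\alpha_{\rm um}$ acts on the field and is blind to the choice of path, the conditioning argument of Proposition \ref{prop-tcl} applies verbatim and reduces $E\exp(Y_e(\omega)+Y_e(\omega_1))$ to $\prod_{j\ \mathrm{even}}E\exp(\eta^\eps_j(\omega)+\eta^\eps_j(\omega_1))\,(1+o(1))$, with the analogous identity for the odd blocks and an even/odd factorization up to $o(\eps^{2\nu})$.

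Next I expand each block factor. Using $|\eta^\eps_j(\cdot)|\le C\eps^{1/6}$ from (\ref{est_eta-zeta}), the centering of $c$, and Cauchy--Schwarz within the block (which gives $|E[\eta^\eps_j(\omega)\eta^\eps_j(\omega_1)]|\le 2\Xi^\eps_j=O(\eps^{2/3})$, so that the cubic and quartic terms are $O(\eps^{5/6})$ as in (\ref{aux_pro02})--(\ref{aux_pro05})), I obtain
\[
E\exp\big(\eta^\eps_j(\omega)+\eta^\eps_j(\omega_1)\big)=1+\Xi^\eps_j(\omega)+\Xi^\eps_j(\omega_1)+X^\eps_j+O(\eps^{5/6}),
\]
where $\Xi^\eps_j(\cdot)=\tfrac12E[(\eta^\eps_j(\cdot))^2]$ is the diagonal term of Proposition \ref{prop-tcl}, so that $\sum_{j\ \mathrm{even}}\Xi^\eps_j(\cdot)\to\tfrac t2\Sigma$ by the weak law of large numbers, and $X^\eps_j=E[\eta^\eps_j(\omega)\eta^\eps_j(\omega_1)]=\eps\int_{{\cal I}^\eps_j}\int_{{\cal I}^\eps_j}\Phi(r-s,\tilde B_r(\omega)-\tilde B_s(\omega_1))\,dr\,ds$ is the new cross term. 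Granting that $\sum_j X^\eps_j\to0$ in $\P\times\P$--probability, taking products yields $E\exp(Y_e(\omega)+Y_e(\omega_1))\to\exp(t\Sigma)$, and likewise for the odd part, whence reassembling even, odd and spacer parts with the $\eps^{-\nu}$ mixing and the uniform bounds gives $E\{\exp(Y(\omega))\exp(Y(\omega_1))\}\to\exp(2t\Sigma)$.

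It remains to prove $\sum_j X^\eps_j\to0$, which is the heart of the matter and the step I expect to be the main obstacle, since the crude block bound $|X^\eps_j|=O(\eps^{2/3})$ only gives $\sum_j|X^\eps_j|=O(1)$ and does \emph{not} vanish. Bounding $|\sum_j X^\eps_j|$ by the full double integral of $|\Phi|$ over $[0,t/\eps]^2$ and taking the expectation $\E_{\omega,\omega_1}$ over the two independent Brownian motions, I use that $\tilde B_r(\omega)-\tilde B_s(\omega_1)$ is centered Gaussian with variance $r+s$, hence has density bounded by $(2\pi(r+s))^{-1/2}$; writing $\psi(\tau)=\int_\R|\Phi(\tau,z)|\,dz$, which is integrable because $\Phi\in L^1(\R\times\R)$ (cf.\ Corollary \ref{corEK}), this gives
\[
\E_{\omega,\omega_1}\Big|\sum_j X^\eps_j\Big|\le \eps\int_0^{t/\eps}\!\!\int_0^{t/\eps}\frac{\psi(r-s)}{\sqrt{2\pi(r+s)}}\,dr\,ds\le \eps\,\|\psi\|_{L^1}\int_0^{t/\eps}\frac{ds}{\sqrt{2\pi s}}=C\sqrt{\eps t}\to0,
\]
and Markov's inequality yields the claim. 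The mechanism is that two independent Brownian paths separate like a Gaussian of variance $r+s$, so the integrable spatial decay of $\Phi$ washes out their interaction except on a time set of measure $O(\sqrt{t/\eps})$, negligible against the full length $t/\eps$; for $0<\beta<1/2$ the same computation carries the additional factor $\eps^{(1-2\beta)/2}$ and produces the bound $O(\eps^{\beta})$, which again tends to $0$.
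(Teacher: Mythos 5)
Your proposal is correct on the same range where the paper's own proof is correct (namely $0<\beta\le 1/2$), but it takes a genuinely different and more quantitative route. The paper's proof is only a few lines: it notes that two independent Brownian paths satisfy $\lim_{\delta\to0}\mathrm{meas}\{s\in[0,t]:|B_s(\omega)-B_s(\omega_1)|<\delta\}=0$, $\P\times\P$-a.s.\ (relation \eqref{prod_prod}); for $\beta>0$ any fixed separation $\delta$ is inflated by the scaling to $\delta\eps^{-\beta}\to\infty$, so $(H_{\rm um})$ makes the field contributions along the two paths asymptotically independent, the conditional expectation over the medium factorizes into a product, and the conclusion follows by applying Proposition \ref{prop-tcl} to each factor. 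You instead rerun the block decomposition of Proposition \ref{prop-tcl} for the doubled exponent; the diagonal ingredients (Taylor expansion, third and fourth moment bounds, even/odd mixing) do carry over verbatim, because the mixing bounds of type \eqref{vst_3} constrain only the time separation of the field arguments and are blind to the spatial evaluation points, hence to the choice of path. The only genuinely new object is the cross term $X^\eps_j=E[\eta^\eps_j(\omega)\eta^\eps_j(\omega_1)]$, which you correctly annihilate by averaging over both paths: $\tilde B_r(\omega)-\tilde B_s(\omega_1)$ is centered Gaussian of variance $r+s$, whence $\E\big|\sum_j X^\eps_j\big|\le C\sqrt{\eps t}$ using the integrability of $\psi(\tau)=\int_\R|\Phi(\tau,z)|dz$. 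What your approach buys is a rate --- convergence in $L^1(\P\times\P)$ at speed $\eps^\beta$ (speed $\sqrt{\eps}$ at $\beta=1/2$) --- versus the paper's purely qualitative argument; what it costs is the re-verification of the whole block machinery for the doubled exponent, which the paper avoids entirely by treating Proposition \ref{prop-tcl} as a black box. One point deserves emphasis: your final bound $O(\eps^\beta)$ degenerates exactly at $\beta=0$, and this is not a defect of your argument. The paper's proof equally fails at $\beta=0$ (there is then no spatial inflation of the path separation, and the two copies of the field are evaluated at identical times), and the paper itself later observes that for $\beta=0$ the joint law of $(Y^{\eps,x}(\omega),Y^{\eps,x}(\omega'))$ is ``dramatically different''; so the range ``$0\le\beta\le1/2$'' in the statement of the Proposition should in both treatments be read as $0<\beta\le1/2$.
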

\bpf
It is easy to check that for the standard Brownian motion $B_s$ and for any $t>0$ the limit relation holds
\begin{equation}\label{prod_prod}
\lim\limits_{\delta\to0}\mathrm{meas}\big\{s\in [0,t]\,:\,|B_s(\omega)-B_s(\omega_1)|<\delta\big\}=0
\end{equation}
$\mathbb P\times\mathbb P$-a.s.
Due to the condition ($H_{\rm um}$), for any pair $(\omega,\omega_1)$ such that (\ref{prod_prod}) is fulfilled, we have
$$
\lim\limits_{\eps\to 0}E\big\{\exp\big(Y_t^{\eps,x}(\omega)\big) \exp\big(Y_t^{\eps,x}(\omega_1)\big)\big\}
$$
$$
=\lim\limits_{\eps\to 0}E\big\{\exp\big(Y_t^{\eps,x}(\omega)\big)\big\}\,\lim\limits_{\eps\to 0}E \big\{\exp\big(Y_t^{\eps,x}(\omega_1)\big)\big\},
$$
and the desired statement follows from Proposition \ref{prop-tcl}.
\epf

\subsection{The case $\alpha=2\beta>0$}
This is the ``central case'', where $\alpha/4+\beta/2=\alpha/2$.
In this case, $\gamma=\beta=\alpha/2$, and we consider w. l. o. g. the case
where $\gamma=\beta=1$, $\alpha=2$. This means that we consider the PDE
\begin{equation}\label{eq-eps-cent}
\left\{
\begin{aligned}
\frac{\partial u^\eps}{\partial t}(t,x)&
=\frac{1}{2}\frac{\partial^2 u^\eps}{\partial x^2}(t,x)
+\frac{1}{\eps} c\left(\frac{t}{\eps^2},\frac{x}{\eps}\right)u^\eps(t,x),
\quad t\ge0,\, x\in\R;\\
u^\eps(0,x)&=g(x),\quad x\in\R,
\end{aligned}
\right.
\end{equation}
whose solution is given by the Feynman--Kac formula
\[
u^\eps(t,x)=\E\left[g(x+B_t)\exp\left(\eps^{-1}\int_0^t
c\left(\frac{s}{\eps^2},\frac{x+B_s}{\eps}\right)ds\right)\right].
\]

We will show that the limit of $u^\eps(t,x)$, as $\eps\to 0$, is a deterministic function.

Let us define
\[
Y^{\eps,x}_t=
\eps^{-1}\int_0^t
c\left(\frac{s}{\eps^2},\frac{x+B_s}{\eps}\right)ds,
\]
Then
\[
u^\eps(t,x)=\E\left[g(x+B_t)\exp(Y^{\eps,x}_t)\right].
\]
The random variable $Y^{\eps,x}_t$ is defined on the product
probability space $(\Ss\times\Omega,\A\otimes\F,P\times\P)$.

The limit of $u^\eps(t,x)$ will be obtained by a combination of Proposition
\ref{prop-tcl} (in the case $\beta=1/2$) and some uniform integrability property, which we now establish. Let us
prove the uniform in $\eps>0$ and $\omega\in\Omega$ integrability
with respect to the measure $P$ of the random variable
$$
\exp(Y_t^\eps)=\exp\bigg(\eps\int\limits_0^{t/\eps^2}
c\left(s,\frac{x}{\eps}+B_s\right)ds\bigg),\qquad
t>0.
$$
Because we need slightly different versions of the same result in other sections of this paper, we prove a more general result, which will be used in this section with $\nu=0$.

\begin{lemma}\label{l_uni_inte}
If the assumption $(H_{\rm um})$ is satisfied, then there exists $C$ such that for
all $\e>0$ and $\nu\in\R$,
\begin{equation}\label{ravnint}
E\exp\bigg(4\eps\int\limits_0^{t/\eps^2}
c(s,x\eps^{\frac{\nu}{2}-1}+B_{\e^\nu s})ds\bigg)\le C.
\end{equation}
\end{lemma}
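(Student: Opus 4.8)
The plan is to prove the uniform bound
$$
E\exp\!\bigg(4\eps\int_0^{t/\eps^2} c\big(s,x\eps^{\frac{\nu}{2}-1}+B_{\eps^\nu s}\big)\,ds\bigg)\le C
$$
by the same chopping-and-mixing scheme already used in Proposition \ref{prop-tcl}, but now aiming only at a \emph{bound} rather than an exact limit. I would first set $T=t/\eps^2$ and split the interval $(0,T)$ into alternating blocks ${\cal I}^\eps_j$ of length $\eps^{-2/3}$ (in the rescaled variable) separated by gaps ${\cal J}^\eps_j$ of length $\eps^{-\nu'}$ with $0<\nu'<2/3$, exactly as in the earlier partition. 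Writing $\lambda^\eps_j=\eps\int_{{\cal I}^\eps_j}c\,ds$ for the block increments, the exponent becomes $\sum_j\lambda^\eps_j$ plus a negligible gap contribution, and the standing uniform boundedness of $c$ gives the pointwise bounds $|\lambda^\eps_j|\le C\eps^{1/3}$ analogous to \eqref{est_eta-zeta}; these are the estimates labelled \eqref{vst_3} and the bound \eqref{vst_0} referenced from Proposition \ref{prop-tcl}.

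**Key steps.** First I would separate even and odd blocks, $\sum_j=\sum_{\mathrm{even}}+\sum_{\mathrm{odd}}$, so that within each family the blocks are separated by gaps of width $\eps^{-\nu'}$; by Cauchy--Schwarz it suffices to bound $E\exp(8\eps\sum_{\mathrm{even}}\cdots)$ and the odd analogue separately. Second, I would run the conditioning argument of Proposition \ref{prop-tcl}: conditioning successively on the $\sigma$-algebras ${\cal A}^\eps_j$ and invoking Proposition 2.6 page 349 of \cite{EtKu} together with $(H_{\rm um})$, each conditional factor $E\{\exp(4\lambda^\eps_j)\mid{\cal A}\}$ equals $E\exp(4\lambda^\eps_j)$ up to a multiplicative error $1+O(\eps^{(2+\delta)/3})$ controlled by $\alpha_{\rm um}(\eps^{-\nu'})\le C\eps^{\nu'(2+\delta)}$. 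Iterating over the $O(\eps^{-2/3})$ even blocks turns the expectation into a product $\prod_j E\exp(4\lambda^\eps_j)$ times $\prod_j(1+O(\eps^{(2+\delta)/3}))$, the latter staying bounded provided $\nu'(2+\delta)>2/3$, which is arrangeable. Third, a Taylor expansion as in \eqref{aux_pro02} gives
$$
E\exp(4\lambda^\eps_j)=1+\tfrac{1}{2}E\big((4\lambda^\eps_j)^2\big)+O(\eps)=1+O(\eps^{2/3}),
$$
since $E\lambda^\eps_j=0$ by centering and the second moment is $O(\eps^{2/3})$ by Corollary \ref{corEK} (integrability of $\Phi$); here it is essential that the time-rescaling $B_{\eps^\nu s}$ does not spoil the covariance estimate, because $|\Phi|$ is dominated by the decreasing integrable envelope $\Psi$ from \eqref{cov_ubou}. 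Consequently the product of $O(\eps^{-2/3})$ factors each of the form $1+O(\eps^{2/3})$ is uniformly bounded, which is the desired conclusion.

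**Main obstacle.** The delicate point is uniformity in the two parameters $\eps$ and $\nu$ simultaneously. For $\nu>0$ the Brownian clock $B_{\eps^\nu s}$ runs slowly, so within one block the spatial argument of $c$ moves only by $O(\eps^{\nu/2}\eps^{-1/3})$, and I must verify that the second-moment computation $E((\lambda^\eps_j)^2)=\eps^2\iint_{{\cal I}^\eps_j}\Phi(r-s,B_{\eps^\nu r}-B_{\eps^\nu s})\,dr\,ds$ still yields the order $\eps^{2/3}$ uniformly in $\nu$, using only the time-decay $\int_0^\infty\Psi(u,0)\,du<\infty$ rather than spatial decay. The envelope $\Psi$ and Corollary \ref{corEK} are exactly what make this robust, so the estimate should go through, but the bookkeeping of how the gap width $\eps^{-\nu'}$ interacts with the external parameter $\nu$ in the mixing coefficient $\alpha_{\rm um}$ is where I expect the real care to be required. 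Since the statement asks only for an upper bound, I do not need the sharp constant $\Sigma$, which simplifies matters considerably: any crude but uniform control of $\prod_j E\exp(4\lambda^\eps_j)$ suffices.
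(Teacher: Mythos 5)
Your proposal follows what is in fact the paper's own strategy for Lemma \ref{l_uni_inte} (blocking of $[0,t/\eps^2]$, even/odd separation absorbed by a Cauchy--Schwarz factor, per-block Taylor expansion using centering and a mixing bound on the covariance, then iterated conditioning via Proposition 7.2.6 of \cite{EtKu}), and you correctly isolate the one point that makes the bound uniform in $\nu$: the block covariance must be estimated through temporal decay of the correlation alone, bounding $|\Phi(r-s,B_{\eps^\nu r}-B_{\eps^\nu s})|$ by $\rho_{\rm mc}(|r-s|)$, never through the spatial argument. However, as written the proposal has a genuine hole: the gap terms are not negligible and you never bound them. With horizon $T=t/\eps^2$, blocks of length $\eps^{-2/3}$ and gaps of length $\eps^{-\nu'}$, there are of order $\eps^{-4/3}$ gaps, so the gap contribution to the exponent has sup-norm of order $\eps\cdot\eps^{-4/3}\cdot\eps^{-\nu'}=\eps^{-1/3-\nu'}\to\infty$; it is not a boundary remainder of the type $|V_\eps|\le C\eps^{1/3}$, and its exponential moment can only be controlled by running the same blocking/conditioning machinery on the gap family as well. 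The paper's proof avoids this issue entirely: it uses an \emph{equidistant} partition into intervals of length $\eps^{\gamma-1}$, $0<\gamma<1/2$, so that the odd-indexed intervals themselves serve as the separating gaps for the even-indexed ones (and vice versa), and the factor $8=2\times 4$ from Cauchy--Schwarz splits the two families. Since only an upper bound is required, the decorrelating gaps ${\cal J}^\eps_j$ of Proposition \ref{prop-tcl}, whose purpose there was to factorize $E\exp(Y_e^\eps+Y_o^\eps)$, are simply unnecessary here; importing them creates the problem.

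A second, related flaw is that your bookkeeping carries over the scales of Proposition \ref{prop-tcl} (horizon $t/\eps$) rather than those of the Lemma (horizon $t/\eps^2$). With your partition, the number of blocks is $O(\eps^{-4/3})$, not $O(\eps^{-2/3})$, and the per-block second moment is $E\big((\lambda_j^\eps)^2\big)=O(\eps^2\cdot\eps^{-2/3})=O(\eps^{4/3})$, not $O(\eps^{2/3})$. These two slips compensate in the Taylor product, but not in the mixing error: with $O(\eps^{-4/3})$ conditioning steps, the per-step error $\alpha_{\rm um}(\eps^{-\nu'})\cdot O(\eps^{1/3})$ must be $O(\eps^{4/3})$, which forces $\nu'(2+\delta)\ge 1$; your condition $\nu'(2+\delta)>2/3$ is not sufficient. (You could instead exploit the fact that consecutive blocks \emph{within} the even family are actually separated by a full intervening block, i.e.\ by $\eps^{-2/3}$, not by $\eps^{-\nu'}$, which gives $\alpha_{\rm um}(\eps^{-2/3})\cdot\eps^{1/3}=O(\eps^{5/3+2\delta/3})$ and closes the argument --- this is exactly the paper's computation, where the error per step is $\eps^{\gamma}(\eps^{\gamma-1})^{-(2+\delta)}\le\eps^{\gamma+1}$ for $\gamma<1/2$.) Likewise the Taylor remainder per block must be $o(\eps^{4/3})$, not merely $O(\eps)$ as you state; this is true, since it is $O\big(\|\lambda_j^\eps\|_\infty E(\lambda_j^\eps)^2\big)=O(\eps^{5/3})$, but with your block count the stated $O(\eps)$ would not suffice. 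In short: right strategy, correct key insight on uniformity in $\nu$, but the partition geometry and the exponent arithmetic must be repaired before the iteration actually closes.
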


\begin{remark} The condition  $\alpha(r)\le
C(1+r)^{-(1+\delta/2)}$, which is weaker than $(H_{\rm um})$, does imply that
$\rho\in L^1(\R_+)$.
However, the proof would be slightly more delicate. In particular, the parameter $\gamma$ which appears in the proof below should be choosen as a function of $\delta$.
\end{remark}

\bpf
Let $\gamma$ be an arbitrary positive number such that
$0<\gamma<1/2$, and consider an equidistant partition of the
interval $[0,\frac{t}{\eps^2}]$, the length of all subintervals
being equal to $\eps^{\gamma-1}$ (without loss of generality we
assume that $t\eps^{-(\gamma+1)}$ is an integer and, moreover, an
even number). We estimate separately the contribution of all the
subintervals with even numbers and of those with odd numbers. It
suffices to show that, with $\rho=\nu/2-1$,
\begin{equation}\label{vst_0}
E\exp\bigg(8\sum\limits_{j=1}^{t\eps^{-(\gamma+1)}/2}
\eps\int\limits_{2(j-1)\eps^{(\gamma-1)}}^{(2j-1)\eps^{(\gamma-1)}}
c(s,x\eps^\rho+B_{\e^\nu s})ds\bigg)\le C,
\end{equation}
$$
E\exp\bigg(8\sum\limits_{j=1}^{t\eps^{-(\gamma+1)}/2}
\eps\int\limits_{(2j-1)\eps^{(\gamma-1)}}^{2j\eps^{(\gamma-1)}}
c(s,x\eps^\rho+B_{\e^\nu s})ds\bigg)\le C.
$$
We introduce the notation
$$
\lambda_j^\eps=8\eps\int\limits_{2(j-1)\eps^{(\gamma-1)}}^{(2j-1)\eps^{(\gamma-1)}}
c(t,x\eps^\rho+B_{\e^\nu t})dt;\qquad \F_j^\eps=
\sigma\{c(t,x)\,:\,t\le2(j-1)\eps^{(\gamma-1)},\ x\in \R\}.
$$
Since $|c(s,x)|\le C$, we have the bound
\begin{equation}\label{vst_01}
|\lambda_j^\eps|\le c\eps^\gamma
\end{equation}
and, moreover,
\begin{equation}\label{vst_1}
E\exp(\lambda_j^\eps)=E\Big(1+\lambda_j^\eps+\frac{(\lambda_j^\eps)^2}{2!}+\dots+
\frac{(\lambda_j^\eps)^k}{k!}\Big)+\circ(\eps^{\gamma+1}),
\end{equation}
provided $k\ge(\frac{1}{\gamma}+1)$. The last term on the right hand
side admits the bound $|\circ(\eps^{\gamma+1})|\le
\kappa(\eps)\eps^{\gamma+1}$, where $\kappa$ is a deterministic function
defined on $\R_+$, which is such that $\kappa(\eps)\to0$, as $\eps\to0$.
 Since the random
field $\{c(t,x),\  t\ge0,x\in\R\}$ is centered,
$E\lambda_j^\eps=0$. Then
\begin{equation}\label{vst_2}
\begin{split}
E((\lambda_j^\eps)^2)&=64\eps^2
\int\limits_{2(j-1)\eps^{(\gamma-1)}}^{(2j-1)\eps^{(\gamma-1)}}
\int\limits_{2(j-1)\eps^{(\gamma-1)}}^{(2j-1)\eps^{(\gamma-1)}}
E\big(c(t,x\eps^\rho+B_{\e^\nu t})c(s,x\eps^\rho+B_{\e^\nu s})\big)dsdt\\
&\le c\eps^2\int\limits_0^{\eps^{(\gamma-1)}}
\int\limits_0^{\eps^{(\gamma-1)}}\rho(t-s)dtds\\
&\le
C\eps^{1+\gamma};
\end{split}
\end{equation}
For $m\ge 2$ we obtain
\begin{equation}\label{vst_3}
\begin{split}
|E(\lambda_j^\eps)^m|&\le c_m\eps^m
\int\limits_{2(j-1)\eps^{(\gamma-1)}}^{(2j-1)\eps^{(\gamma-1)}}\!\!\!\!\dots
\!\!\!\!\int\limits_{2(j-1)\eps^{(\gamma-1)}}^{(2j-1)\eps^{(\gamma-1)}}
|E\big(c(t_1,x\eps^\rho+B_{\e^\nu t_1})\dots c(t_m,x\eps^\rho+B_{\e^\nu t_m})\big)|dt_1\dots dt_m
\\
&= c_m\eps^m
\int\limits_{2(j-1)\eps^{(\gamma-1)}}^{(2j-1)\eps^{(\gamma-1)}}\!\!\!\!\dots
\!\!\!\!\int\limits_{2(j-1)\eps^{(\gamma-1)}}^{(2j-1)\eps^{(\gamma-1)}}
\Big|E\Big(c(t_1,x\eps^\rho+B_{\e^\nu t_1})\prod_{i=2}^m c(t_i,x\eps^\rho+B_{\e^\nu t_i})\Big)\Big|dt_1\dots dt_m
\\
&\le
c_m\eps^m\|c(\cdot,\cdot)\|^m_\infty\int\limits_0^{\eps^{(\gamma-1)}}\dots
\int\limits_0^{\eps^{(\gamma-1)}}\rho(\min\limits_{2\le i\le
m}|t_i-t_1|)dt_1\dots dt_m
\\
&\le \sum\limits_{i=2}^m
c_m\eps^m\int\limits_0^{\eps^{(\gamma-1)}}\!\!\dots\!\!
\int\limits_0^{\eps^{(\gamma-1)}}\!\!\!\rho(|t_i-t_1|)dt_1\dots
dt_m\\
 &\le c_m\eps^m
\eps^{(\gamma-1)(m-1)}
=c_m\eps^{(1+(m-1)\gamma)}.
\end{split}
\end{equation}
Combining (\ref{vst_1})--(\ref{vst_3}) together gives
\begin{equation}\label{vst_4}
E\exp(\lambda_j^\eps)\le 1+c\eps^{(\gamma+1)}.
\end{equation}
Now, letting $L=t/(2\eps^{\gamma+1})$, we can estimate the left
hand side of (\ref{vst_0}) as follows

\begin{align*}
E\exp\big(\sum\limits_{j=1}^L \lambda_j^\eps\big)&=
E\bigg(E\big\{\exp\big(\sum\limits_{j=0}^L
\lambda_j^\eps\big)\,\big|\,
\F_{L-1}^\eps\big\}\bigg)\\
&=E\Big[\exp\Big(\sum\limits_{j=0}^{L-1} \lambda_j^\eps\Big)
E\big\{\exp\big(\lambda_L^\eps\big)\,\big|\,
\F_{L-1}^\eps\big\}\Big]\\
&=E\exp\Big(\sum\limits_{j=0}^{L-1}
\lambda_j^\eps\Big) E\big(\exp(\lambda_L^\eps)\big)\\
&\quad+
E\Big[\exp\Big(\sum\limits_{j=0}^{L-1} \lambda_j^\eps\Big)
E\big\{\big[\exp\big(\lambda_L^\eps\big)-
E\exp\big(\lambda_L^\eps\big)\big]\,\big|\,
\F_{L-1}^\eps\big\}\Big]\\
&\le(1+c\eps^{(1+\gamma)})
E\exp\Big(\sum\limits_{j=0}^{L-1} \lambda_j^\eps\Big)\\
&\quad+E\Big[\exp\Big(\sum\limits_{j=0}^{L-1} \lambda_j^\eps\Big)
E\big\{\big[\exp\big(\lambda_L^\eps\big)-
E\exp\big(\lambda_L^\eps\big)\big]\,\big|\, \F_{L-1}^\eps\big\}\Big]
\end{align*}
Using successfully Proposition 7.2.6 from \cite{EtKu}, the obvious
inequality
$$\|\exp(\xi)-E\exp(\xi)\|_\infty\le\|\exp(\xi)\|_\infty\|\xi\|_\infty,$$
the bound (\ref{vst_01}), and the fact that $\gamma<1/2$, we
obtain the inequality
\begin{align*}
\big|E\big\{\big[\exp\big(\lambda_L^\eps\big)-
E\exp\big(\lambda_L^\eps\big)\big]\,\big|\, \F_{L-1}^\eps\big\}\big|
&\le c \alpha(\eps^{(\gamma-1)})\|\exp\big(\lambda_L^\eps\big)-
E\exp\big(\lambda_L^\eps\big)\|_{\lower 4pt\hbox{$\scriptstyle
L^\infty$}}\\
&\le c\|\exp\big(\lambda_L^\eps\big)\|_{\lower 4pt\hbox{$\scriptstyle
L^\infty$}}\|\lambda_L^\eps\|_{\lower 4pt\hbox{$\scriptstyle
L^\infty$}}\alpha(\eps^{(\gamma-1)})\\
&\le c\eps^\gamma
(\eps^{(\gamma-1)})^{-(2+\delta)}\\&=c\eps^{(2-\gamma+\delta-\gamma
\delta)}\\&\le c\eps^{(\gamma+1)}.
\end{align*}
Finally, we conclude that
$$
E\exp\big(\sum\limits_{j=0}^L \lambda_j^\eps\big)\le
E\exp\big(\sum\limits_{j=0}^{L-1}
\lambda_j^\eps\big)(1+c\eps^{(\gamma+1)}).
$$
Iterating this inequality, we get after $L$ steps:
$$
E\exp\big(\sum\limits_{j=0}^L \lambda_j^\eps\big)\le
\big(1+c\eps^{(\gamma+1)}\big)^L\le
\big(1+c\eps^{(\gamma+1)}\big)^{(t/\eps^{(\gamma+1)})}\le
\exp(2ct).
$$
The contribution of the odd terms can be estimated exactly in the
same way, and the proof is complete.
\epf

\begin{proposition}\label{aeq2b}

We have that
$$
E\Big(\big(\E(g(x+B_t)[e^{Y_t^{\eps,x}}-e^{t\Sigma}])\big)^2\Big)\,\to\,0
$$
as $\eps\to 0$, where
$$\Sigma=\int_0^\infty \E\Phi(r,B_r)dr.$$
\end{proposition}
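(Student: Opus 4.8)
The plan is to reduce the $L^2(P)$ statement to the two convergence-in-probability results already established, by expanding the square through an independent second Brownian copy and then invoking uniform integrability. First I would introduce a standard Brownian motion $B'$, an independent copy of $B$, so that the pair $(B,B')$ has law $\P\times\P$; write $X'^x_t=x+B'_t$ and let $Y'^{\eps,x}_t$ denote the exponent built from $B'$ and the \emph{same} field $c$. Setting
$$
U^\eps:=\E\big[g(X^x_t)(e^{Y^{\eps,x}_t}-e^{t\Sigma})\big],
$$
a $\sigma(c)$--measurable random variable, I would use Fubini (licensed by the domination below) to write
$$
E\big[(U^\eps)^2\big]=\E\,\E'\,E\Big[g(X^x_t)g(X'^x_t)\big(e^{Y^{\eps,x}_t}-e^{t\Sigma}\big)\big(e^{Y'^{\eps,x}_t}-e^{t\Sigma}\big)\Big],
$$
where $\E,\E'$ are the expectations in $B,B'$ and the inner $E$ is over the shared field $c$. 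Expanding the product inside $E$ produces four terms, which I regard as a function of the pair $(B,B')$:
$$
\Delta^\eps:=E\big[e^{Y^{\eps,x}_t+Y'^{\eps,x}_t}\big]-e^{t\Sigma}E\big[e^{Y^{\eps,x}_t}\big]-e^{t\Sigma}E\big[e^{Y'^{\eps,x}_t}\big]+e^{2t\Sigma}.
$$

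Next I would show that $\Delta^\eps\to0$ in $\P\times\P$--probability. Since the central--case exponent $Y^{\eps,x}_t=\eps^{-1}\int_0^t c(s/\eps^2,(x+B_s)/\eps)\,ds$ coincides in law, after the Brownian scaling used in the paper, with the exponent treated in Proposition \ref{prop-tcl} for $\beta=1/2$ (with $\eps$ replaced by $\eps^2$), Proposition \ref{prop-tcl} gives $E[e^{Y^{\eps,x}_t}]\to e^{t\Sigma}$ in $\P$--probability, with $\Sigma$ exactly the quantity of the present statement, and likewise for the $B'$--factor. Proposition \ref{prop-tcl-prod} gives $E[e^{Y^{\eps,x}_t+Y'^{\eps,x}_t}]\to e^{2t\Sigma}$ in $\P\times\P$--probability. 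Hence the four terms of $\Delta^\eps$ converge respectively to $e^{2t\Sigma}$, $e^{2t\Sigma}$, $e^{2t\Sigma}$, $e^{2t\Sigma}$, so that $\Delta^\eps\to0$ in $\P\times\P$--probability.

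Finally I would pass from convergence in probability of the integrand to convergence of the integral by dominated convergence. The crucial ingredient is Lemma \ref{l_uni_inte} with $\nu=0$: its proof is carried out path by path in the Brownian motion and yields, for $\P$--a.e. fixed path, the bound $E[e^{4Y^{\eps,x}_t}]\le C$ with a deterministic constant $C$ independent of $\eps$. By Cauchy--Schwarz and Jensen this controls $E[e^{Y^{\eps,x}_t+Y'^{\eps,x}_t}]$ and each single factor uniformly; combined with the finiteness of $\Sigma$ (which follows from $|\Phi|\le\Psi$ and \eqref{cov_ubou}) it produces a deterministic bound $|\Delta^\eps|\le C'$. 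Since $g\in L^2(\R)$ and the density of $X^x_t$ is bounded, $\E\,\E'|g(X^x_t)g(X'^x_t)|=(\E|g(X^x_t)|)^2<\infty$, so the integrand is dominated by the fixed integrable function $C'\,|g(X^x_t)g(X'^x_t)|$, uniformly in $\eps$. Convergence in probability together with this uniform domination then gives $\E\,\E'\,E[\,\cdot\,]=E[(U^\eps)^2]\to0$.

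The step I expect to be the main obstacle is the uniform integrability feeding the dominated convergence: what is needed is not a bound on the joint expectation of $e^{4Y^{\eps,x}_t}$ but a bound \emph{conditional on the Brownian path}, valid uniformly in $\eps$. This is precisely what the path-by-path proof of Lemma \ref{l_uni_inte} supplies, and recognizing that its constant does not depend on $B$ is what lets the argument close. By contrast, the convergence of the field averages themselves is already delivered by Propositions \ref{prop-tcl} and \ref{prop-tcl-prod}, the latter resting on the almost-sure separation \eqref{prod_prod} of two independent Brownian paths.
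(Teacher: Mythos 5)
Your proposal is correct and follows essentially the same route as the paper: expand the square over two independent Brownian copies, invoke Propositions \ref{prop-tcl} (in the $\beta=1/2$ scaling) and \ref{prop-tcl-prod} for convergence in $\P$-- and $\P\times\P$--probability of the field expectations, and use the path-uniform bound of Lemma \ref{l_uni_inte} (with $\nu=0$) to dominate and pass to the limit. Your write-up is in fact somewhat more explicit than the paper's on the two points it leaves implicit, namely the Fubini step and the fact that the constant in Lemma \ref{l_uni_inte} is deterministic, i.e.\ independent of the Brownian path.
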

\bpf
We have to compute
\begin{align}\label{q-ku}
E&\Big(\big(\E(g(x+B_t)[e^{Y_t^{\eps,x}}-e^{t\Sigma}])\big)^2\Big)\\
\nonumber
&=\int_\Omega\int_\Omega g(x+B_t(\omega))g(x+B_t(\omega'))\,
E\big(e^{Y_t^{\eps,x}(\omega)+Y_t^{\eps,x}(\omega')}\big)\P(d\omega)
\P(d\omega')\\
\nonumber
&-2e^{t\Sigma}\E g(x+B_t)\int_\Omega g(x+B_t(\omega))
Ee^{Y_t^{\eps,x}(\omega)}\P(d\omega)+(\E g(x+B_t))^2e^{2t\Sigma}.
\end{align}
It follows from Proposition \ref{prop-tcl}, Proposition \ref{prop-tcl-prod} and Lemma \ref{l_uni_inte} that
\begin{equation}\label{fo52}
Ee^{Y_t^{\eps,x}}\,\to\,e^{t\Sigma},
\end{equation}
in $\P$--probability
as $\eps\to 0$, and
\begin{equation}\label{fo_53}
Ee^{Y_t^{\eps,x}(\omega)+Y_t^{\eps,x}(\omega')}\,\to\,e^{2t\Sigma}
\end{equation}
in
$\P(d\omega)\times\P(d\omega')$--probability as $\eps\to0$.
Passing to the limit, as $\eps\to0$, on the right-hand side of (\ref{q-ku})
we arrive at the required assertion.
%
%
%
\epf

An immediate consequence of the last Proposition is the

\begin{corollary}
The limit $u$ of $u^\eps$ is given by
\[
u(t,x)=\E[g(x+B_t)]\exp\left(t\Sigma\right),
\]
which is a solution of the deterministic parabolic PDE
\begin{equation}
\left\{
\begin{aligned}
\frac{\partial u}{\partial t}(t,x)&=\frac{1}{2}
\frac{\partial^2 u}{\partial x^2}(t,x)+\Sigma u(t,x),\quad t\ge 0,\
x\in\R;
\\
u(0,x)&=g(x),\ x\in\R.
\end{aligned}
\right.
\end{equation}
\end{corollary}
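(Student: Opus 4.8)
The plan is to read the limit off directly from Proposition \ref{aeq2b} and then to check the limiting equation by an essentially one-line computation.

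First I would rewrite the quantity whose second $P$-moment is controlled in Proposition \ref{aeq2b}. By the Feynman--Kac formula $u^\eps(t,x)=\E[g(x+B_t)\exp(Y^{\eps,x}_t)]$ and by linearity of the $\P$-expectation (noting that $e^{t\Sigma}$ is a deterministic constant), for each realization of the coefficient field one has
$$
\E\big(g(x+B_t)[e^{Y_t^{\eps,x}}-e^{t\Sigma}]\big)=u^\eps(t,x)-e^{t\Sigma}\,\E[g(x+B_t)].
$$
Hence Proposition \ref{aeq2b} states exactly that $u^\eps(t,x)\to e^{t\Sigma}\E[g(x+B_t)]$ in $L^2(P)$. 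Since $L^2(P)$ convergence implies convergence in $P$-probability and the candidate limit $u(t,x):=e^{t\Sigma}\E[g(x+B_t)]$ is non-random, convergence in $P$-law follows as well, which identifies the limit asserted in the statement.

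Next I would verify that $u$ solves the deterministic PDE. Set $v(t,x)=\E[g(x+B_t)]$; this is the standard probabilistic representation of the solution of the heat equation $\partial_t v=\tfrac12\partial_{xx}v$ with $v(0,\cdot)=g$, that is $v(t,x)=(p_t\ast g)(x)$ with $p_t$ the Gaussian kernel of variance $t$. Writing $u(t,x)=e^{t\Sigma}v(t,x)$ and using that $\Sigma$ is a deterministic constant, the product rule gives
$$
\frac{\partial u}{\partial t}=\Sigma e^{t\Sigma}v+e^{t\Sigma}\frac{\partial v}{\partial t}=\Sigma u+\frac12 e^{t\Sigma}\frac{\partial^2 v}{\partial x^2}=\Sigma u+\frac12\frac{\partial^2 u}{\partial x^2},
$$
while $u(0,x)=v(0,x)=g(x)$. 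This is precisely the announced equation.

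The computation itself is immediate; the only points needing a word of justification are routine. One is regularity: for $t>0$ the heat semigroup smooths $g\in L^2(\R)\cap C(\R)$, so $v$ is of class $C^\infty$ in $x$ and $C^1$ in $t$ on $(0,\infty)\times\R$ and the equation holds in the classical sense, the initial datum being attained by continuity of $g$. The other is the separation of the two probability spaces: the outer $E$ in Proposition \ref{aeq2b} is the coefficient expectation under $P$, whereas the inner $\E$, and hence both $v$ and $\Sigma=\int_0^\infty\E\Phi(r,B_r)dr$, live under $\P$. The argument works only because $\Sigma$ is a genuine number and not a $P$-random variable, which is indeed the case by its definition. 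I therefore expect no real obstacle here; the content of the result is entirely carried by Proposition \ref{aeq2b}.
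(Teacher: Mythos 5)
Your proposal is correct and matches the paper's intent exactly: the paper presents this corollary as ``an immediate consequence'' of Proposition \ref{aeq2b}, and your unwinding of the inner $\P$-expectation to identify $\E\big(g(x+B_t)[e^{Y_t^{\eps,x}}-e^{t\Sigma}]\big)$ with $u^\eps(t,x)-e^{t\Sigma}\E[g(x+B_t)]$, followed by the routine product-rule verification of the PDE, is precisely the intended argument.
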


\subsection{The case $0<2\beta<\alpha$}
Without loss of generality we choose
$\alpha=1$ and $0<\beta<1/2$. Hence $\gamma=1$. We know from
Proposition \ref{prop-tcl} that
$$
Y_t^{\eps,x}=\frac{1}{\sqrt{\eps}}\int_0^t c\Big(\frac{s}{\eps},
\frac{x+B_s}{\eps^\beta}\Big)ds,
$$
converges, as $\eps\to 0$, in $\P$--probability weakly under $P$ to
the Gaussian law $N(0,t\int_\R\Phi(u,0)du)$.

  We now note that the r. v. $Y^{\eps,x}_t$ can be rewritten as
$$Y^{\eps,x}_t=\sqrt{\eps}\int_0^{t/\eps}c\left(s,x\eps^{-\beta}
+B_{s\eps^{(1-\beta)}}\right)ds.$$
Hence it follows from Lemma \ref{l_uni_inte} with $\nu=(1-\beta)$ that
$$\sup_{\eps>0}E\left(\exp[4Y^{\eps,x}_t]\right)\le C.$$
Consequently, by the same arguments as those in the previous section,
we can show the
\begin{proposition}
The limit $u$ of $u^\eps$ is given by
\[
u(t,x)=\E[g(x+B_t)]\exp\left(t\Sigma'\right),
\]
which is a solution of the deterministic parabolic PDE
\begin{equation}
\left\{
\begin{aligned}
\frac{\partial u}{\partial t}(t,x)&=\frac{1}{2}
\frac{\partial^2 u}{\partial x^2}(t,x)+\Sigma' u(t,x),\quad t\ge 0,\
x\in\R;
\\
u(0,x)&=g(x),\ x\in\R,
\end{aligned}
\right.
\end{equation}
where $\Sigma'=\int_0^\infty \Phi(u,0)du$.
\end{proposition}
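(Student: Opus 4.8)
The plan is to reproduce, essentially line by line, the argument of Proposition \ref{aeq2b}, the only substantive changes being the value of the limiting constant and the scaling that produces the required uniform integrability. Writing $u^\eps(t,x)=\E[g(x+B_t)\exp(Y^{\eps,x}_t)]$, I would establish the $L^2(P)$ statement
$$E\Big(\big(\E(g(x+B_t)[e^{Y_t^{\eps,x}}-e^{t\Sigma'}])\big)^2\Big)\to0,\qquad\eps\to0,$$
from which convergence in $P$--probability, and a fortiori in $P$--law, of $u^\eps(t,x)$ towards $\E[g(x+B_t)]e^{t\Sigma'}$ follows at once. Here $\Sigma'=\int_0^\infty\Phi(u,0)\,du$, so that $e^{t\Sigma'}$ is exactly the exponential moment of the limiting Gaussian law $N(0,t\int_\R\Phi(u,0)du)$ of $Y^{\eps,x}_t$ identified above from Proposition \ref{prop-tcl}.

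First I would expand the square by Fubini exactly as in \eqref{q-ku}, splitting it into
$$\int_\Omega\!\int_\Omega g(x+B_t(\omega))g(x+B_t(\omega'))\,E\big(e^{Y_t^{\eps,x}(\omega)+Y_t^{\eps,x}(\omega')}\big)\,\P(d\omega)\P(d\omega'),$$
the cross term $-2e^{t\Sigma'}\E g(x+B_t)\int_\Omega g(x+B_t(\omega))Ee^{Y_t^{\eps,x}(\omega)}\P(d\omega)$, and the constant $(\E g(x+B_t))^2e^{2t\Sigma'}$. Propositions \ref{prop-tcl} and \ref{prop-tcl-prod}, specialized to the regime $0\le\beta<1/2$, supply the two convergences
$$Ee^{Y_t^{\eps,x}}\to e^{t\Sigma'}\ \text{ in }\P\text{--probability},\qquad Ee^{Y_t^{\eps,x}(\omega)+Y_t^{\eps,x}(\omega')}\to e^{2t\Sigma'}\ \text{ in }\P\times\P\text{--probability}.$$

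The one ingredient special to this subcase is the integrability that lets me pass to the limit inside the $\omega$-- and $(\omega,\omega')$--integrals. For this I would use the rewriting $Y^{\eps,x}_t=\sqrt\eps\int_0^{t/\eps}c(s,x\eps^{-\beta}+B_{s\eps^{1-\beta}})ds$ and apply Lemma \ref{l_uni_inte} with $\nu=1-\beta$, which gives $\sup_{\eps>0}E(\exp[4Y_t^{\eps,x}])\le C$. Two applications of the Cauchy--Schwarz inequality under $E$ then yield the \emph{deterministic} bound $E(e^{Y_t^{\eps,x}(\omega)+Y_t^{\eps,x}(\omega')})\le\sqrt C$, uniformly in $\eps,\omega,\omega'$; since $g(X^x_t)\in L^2(\P)$, the integrand of the first term is dominated by the $\P\times\P$--integrable function $\sqrt C\,|g(x+B_t(\omega))|\,|g(x+B_t(\omega'))|$, so dominated convergence together with the convergence in probability above sends it to $e^{2t\Sigma'}(\E g(x+B_t))^2$. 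The same reasoning sends the cross term to $-2e^{2t\Sigma'}(\E g(x+B_t))^2$, and the three contributions cancel, proving the $L^2$ convergence. The identification of the limit PDE is then immediate: $v(t,x):=\E[g(x+B_t)]$ solves $\partial_t v=\tfrac12\partial_{xx}v$ with $v(0,\cdot)=g$, hence $u=v\,e^{t\Sigma'}$ satisfies $\partial_t u=\tfrac12\partial_{xx}u+\Sigma'u$ with $u(0,\cdot)=g$.

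The hard part is entirely contained in the integrability step: everything else is a verbatim transcription of Proposition \ref{aeq2b}. Concretely, one must verify that the self-similar rewriting with $\nu=1-\beta$ brings $Y^{\eps,x}_t$ into the exact form covered by Lemma \ref{l_uni_inte}, and that the uniform bound it furnishes, combined with $g(X^x_t)\in L^2(\P)$, genuinely dominates the products $e^{Y_t^{\eps,x}(\omega)+Y_t^{\eps,x}(\omega')}$ uniformly in $\eps$. Once this domination is in place, the passage to the limit in the three terms and the identification of $u$ are routine.
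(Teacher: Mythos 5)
Your proposal is correct and coincides with the paper's own argument: the paper proves this Proposition precisely by rewriting $Y^{\eps,x}_t=\sqrt{\eps}\int_0^{t/\eps}c\left(s,x\eps^{-\beta}+B_{s\eps^{(1-\beta)}}\right)ds$, obtaining $\sup_{\eps>0}E\left(\exp[4Y^{\eps,x}_t]\right)\le C$ from Lemma \ref{l_uni_inte}, and then invoking ``the same arguments as those in the previous section,'' i.e.\ the $L^2(P)$ expansion of Proposition \ref{aeq2b} based on Propositions \ref{prop-tcl} and \ref{prop-tcl-prod}, with $\Sigma$ replaced by $\Sigma'$. Your explicit Cauchy--Schwarz domination giving the deterministic bound on $E\big(e^{Y_t^{\eps,x}(\omega)+Y_t^{\eps,x}(\omega')}\big)$ merely fills in the uniform-integrability detail that the paper leaves implicit.
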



\subsection{The case $\beta=0$}

In this case, $\gamma=\alpha/2$. Without loss of generality, we restrict ourselves to the case $\alpha=1$.

We will study the limit behaviour of $u^\eps$ under the following additional
assumption:
\begin{itemize}
\item[($H\!\hbox{\it \"o}$)]
For each $s\in\R$ the realizations $c(s,y)$ are a.s. H\"older continuous
in $y\in\R$ with a deterministic exponent $\theta>0$.
Moreover,
$$
|c(s,y_1)-c(s,y_2)|\le c|y_1-y_2|^\theta,
$$
with a deterministic constant $c$.
\item[($H^{q}_{\rm um}$)] There is $\delta>0$ such that
$$
\alpha_{\rm um}(r)\le C(1+r)^{-(q+\delta)},
$$
where $q(\theta)=3$ if $\theta>1/3$ and $q(\theta)=(k+1)$ if $\theta\in[1/k, 1/(k+1))$, $k\ge3$, $k\in\mathbb N$.
\end{itemize}
Proposition \ref{prop-tcl} still applies here. However, it is not sufficiently precise to
be useful in this case. The reason is that the limit of $u^\eps$ will not be deterministic in this case.
Convergence will be only in law, not in probability or in mean square. Going back to the proof of Proposition
\ref{aeq2b}, which is not valid in the present case, we note that while the limiting law of
$Y^{\eps,x}(\omega)$ is the same as above, that of $(Y^{\eps,x}(\omega),Y^{\eps,x}(\omega'))$
will be dramatically different.

Consider the exponent in the above Feynman--Kac formula, written
in its first form. It reads
\[
Y^{x,\e}_t=\frac{1}{\sqrt{\e}}\int_0^tc\left(\frac{s}{\eps},x+B_s\right)ds
=\int_0^t W^\eps(ds,x+B_s),
\]
where
\[
W^\eps(t,x):=\frac{1}{\sqrt{\eps}}\int_0^t
c\left(\frac{s}{\eps},x\right)ds.
\]
We have the
\begin{proposition}\label{last_CLT} Under assumptions ($H\!\hbox{\it \"o}$) and ($H^{q(\theta)}_{\rm um}$), as $\eps\to0$,
$$
W^\eps\to W
$$
in $P$--law, as random elements of\ \ $C(\R_+\times\R)$,
where $\{W(t,x),\, t\ge0,\, x\in\R\}$ is a centered Gaussian process with covariance
function given by
\[
E(W(t,x)W(t',x'))=t\wedge t'\times R(x-x'),
\]
with
\[
R(x)=\int_\R\Phi(r,x)dr.
\]
\end{proposition}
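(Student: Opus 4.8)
\noindent{\sc Proof of Proposition \ref{last_CLT} (plan)} :
The plan is to prove the functional convergence in $C(\R_+\times\R)$ by the two classical steps: convergence of the finite-dimensional distributions together with tightness of the family $\{W^\eps\}$. After the substitution $u=s/\eps$ we may write
\[
W^\eps(t,x)=\sqrt{\eps}\int_0^{t/\eps}c(u,x)\,du ,
\]
so that for each fixed spatial point we are looking at the diffusive rescaling of the time integral of the stationary, centered, bounded process $u\mapsto c(u,x)$, whose uniform mixing coefficient in the time direction is dominated by $\alpha_{\rm um}$, hence controlled by $(H^{q}_{\rm um})$. Note that here, in contrast with the earlier sections, $W^\eps$ is a functional of $c$ alone and lives on $(\Ss,\A,P)$, so all the expectations below are taken with respect to $P$.

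For the finite-dimensional distributions I would fix $(t_1,x_1),\dots,(t_n,x_n)$ and, by the Cram\'er--Wold device, study an arbitrary linear combination $S^\eps=\sum_k a_k W^\eps(t_k,x_k)$. Splitting the time axis along a common refinement of $t_1,\dots,t_n$, one expresses $S^\eps$ as a sum over disjoint time blocks; the big-block/small-block decomposition already used in the proof of Proposition \ref{prop-tcl} shows that blocks separated in time become asymptotically independent (the error being governed by $\alpha_{\rm um}$, which is summable under $(H^{q}_{\rm um})$), so that $S^\eps$ is asymptotically a sum of independent, uniformly small, centered contributions, and a Lyapunov/Lindeberg CLT yields asymptotic normality. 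The limiting variance is read off from the direct computation
\[
E\big[W^\eps(t,x)W^\eps(t',x')\big]=\eps\int_0^{t/\eps}\!\int_0^{t'/\eps}\Phi(v-u,x'-x)\,du\,dv\ \longrightarrow\ (t\wedge t')\int_\R\Phi(r,x'-x)\,dr=(t\wedge t')R(x-x') ,
\]
where the convergence and the integrability of $r\mapsto\Phi(r,x'-x)$ follow from Corollary \ref{corEK}, and the last equality uses that $R$ is even. This identifies the covariance claimed in the statement.

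For tightness I would apply the multiparameter Kolmogorov--Chentsov criterion on the two-dimensional parameter domain, for which it suffices to bound, uniformly in $\eps$ and for some large even integer $2p$, the increments in each coordinate direction. In the time direction, writing $W^\eps(t,x)-W^\eps(t',x)=\sqrt{\eps}\int_{t'/\eps}^{t/\eps}c(u,x)\,du$ and using the moment computations behind \eqref{vst_3}, one obtains $E|W^\eps(t,x)-W^\eps(t',x)|^{2p}\le C_p|t-t'|^{p}$, i.e.\ H\"older regularity of order close to $1/2$. In the spatial direction I would invoke the H\"older assumption ($H\!\hbox{\it \"o}$): the process $u\mapsto c(u,x)-c(u,y)$ is again centered, bounded by $c\,|x-y|^{\theta}$, and mixing, and the $L^1$ bound on $\rho_{\rm mc}$ from Lemma \ref{lemEK} gives $\int_\R|E[(c(0,x)-c(0,y))(c(r,x)-c(r,y))]|\,dr\le C|x-y|^{2\theta}$, whence the same moment machinery yields $E|W^\eps(t,x)-W^\eps(t,y)|^{2p}\le C_p\,t^{p}\,|x-y|^{2p\theta}$. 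Choosing $p$ so large that $\min(p,2p\theta)>2$, i.e.\ both directional exponents exceed the dimension of the parameter space, then closes the tightness argument, and together with the finite-dimensional convergence it gives $W^\eps\Rightarrow W$.

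The main obstacle is precisely this spatial estimate when $\theta$ is small: to make $2p\theta>2$ one must take the moment order $2p$ of order $1/\theta$, and controlling the moment of order $2p$ of a centered bounded mixing integral requires the mixing coefficient to decay fast enough that all the multiple correlation integrals appearing in the analogue of \eqref{vst_3} converge. This is exactly the role of the quantitative condition $(H^{q}_{\rm um})$: the threshold $q(\theta)=k+1$ for $\theta\in[1/k,1/(k+1))$ is tuned so that the moment of the order dictated by the H\"older exponent is finite and bounded uniformly in $\eps$, which is what makes the Kolmogorov criterion applicable.
\epf
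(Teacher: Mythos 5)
Your overall strategy coincides with the paper's: finite-dimensional convergence plus tightness in $C(\R_+\times\R)$ via high-order moment bounds on directional increments, with ($H\!\hbox{\it \"o}$) supplying the factor $|y_1-y_2|^{2p\theta}$ in the spatial direction and the exponent $q(\theta)$ calibrated so that both directional exponents exceed $2$. The covariance identification is the same, and your Cram\'er--Wold/blocking treatment of the finite-dimensional laws is an acceptable substitute for the paper's direct citation of Billingsley's CLT for mixing stationary processes.

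The genuine gap is in the justification of the time-increment bound $E|W^\eps(t,x)-W^\eps(t',x)|^{2p}\le C_p|t-t'|^{p}$, which you attribute to ``the moment computations behind \eqref{vst_3}''. Those computations isolate the single point $t_1$ and bound the $m$-point correlation by $\rho_{\rm mc}\big(\min_{2\le i\le m}|t_i-t_1|\big)$, using only $\rho_{\rm mc}\in L^1$; over the cube $[t'/\eps,t/\eps]^{2p}$, with $T=|t-t'|/\eps$, this yields a bound of order $T^{2p-1}$, hence $\eps^p\,T^{2p-1}=|t-t'|^{2p-1}\eps^{1-p}$, which blows up as $\eps\to0$ for every $p\ge2$. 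To get the power $T^p$ --- which is what Kolmogorov's criterion needs --- the paper introduces the set $S(r)$ of configurations $(t_1,\dots,t_{2p})$ in which \emph{every} point has a neighbour within distance $r$; its volume is $O(r^pT^p)$ (the points group into at most $p$ clusters), while off $S(r)$ some point is isolated at distance $>r$, so that the $2p$-point correlation is $O((1+r)^{-(q+\delta)})$ by $(H^{q}_{\rm um})$. A Stieltjes integration by parts in $r$ then gives $\eps^p\int_0^\infty(1+r)^{-(q+\delta)}dV(r)\le C|t-t'|^p$ precisely when $q\ge p$, and the same decomposition, with the H\"older factor pulled out, gives the spatial estimate. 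This cluster-volume argument is the technical heart of the proof and is what $(H^{q(\theta)}_{\rm um})$ is really for. Your closing paragraph points at the right hypothesis, but the mechanism you invoke --- convergence of the \eqref{vst_3}-type correlation integrals, which already holds under the weaker condition $\rho_{\rm mc}\in L^1$ --- produces the wrong power of $T$ and therefore cannot deliver the claimed moment bounds.
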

\bpf
The convergence of finite dimensional distributions is a direct consequence of the functional Central Limit Theorem for stationary processes having good enough mixing properties. Namely, according to the statements in \cite{Bi}, Chapter 4, \S 20, under the assumption ($H^{q}_{\rm um}$) with $q\ge 1$, for any finite set
$x^1,x^2,\dots,x^m$ the family
$$
\{W^\eps(\cdot,x^1),\dots,W^\eps(\cdot,x^m)\}
$$
converges in law, as $\eps\to0$, in the space $(C(0,T))^m$, towards a $m$-dimensional Wiener process with covariance matrix
$$
\sigma_{ij}=\int\limits_0^\infty E\big(c(s,x^i)c(0,x^j)+c(s,x^j)c(0,x^i)\big)ds= \int\limits_{-\infty}^\infty\Phi(s,x^i-x^j)ds=R(x^i-x^j).
$$
The desired result will follow if we prove the tightness of $\{W^\eps,\eps>0\}$ in $C(\R_+\times\R)$. In order to prove that this family is tight it suffices to show that
there are two numbers $\nu_1>0$ and $\nu_2>2$ such that
$$
E|W^\eps(s_1,y_1)-W^\eps(s_2,y_2)|^{\nu_1}\le C(|s_1-s_2|^{\nu_2}+|y_1-y_2|^{\nu_2})
$$
with a constant $C$ which does not depend on $\eps$.
For presentation simplicity we consider the case $\theta>1/3$ and $q=3$; other cases can be studied exactly in the same way.

We have
$$
E\bigg(\eps^{-1/2}\int\limits_{s_1}^{s_2}c\Big(\frac{t}{\eps},y\Big)dt\bigg)^6
=\eps^{-3}\int\limits_{s_1}^{s_2}\dots \int\limits_{s_1}^{s_2}E \Big\{c\Big(\frac{t_1}{\eps},y\Big)c\Big(\frac{t_2}{\eps},y\Big)\dots c\Big(\frac{t_6}{\eps},y\Big)\Big\}dt_1\dots dt_6
$$
$$
=\eps^3\int\limits_{s_1/\eps}^{s_2/\eps}\dots\int\limits_{s_1/\eps}^{s_2/\eps} E\{c(t_1,y)\dots c(t_6,y)\}dt_1\dots dt_6.
$$
Let us now introduce the set
$$
S(r)=\left\{(t_1,\dots,t_6)\in \Big[\frac{s_1}{\eps},\frac{s_2}{\eps}\Big]^6\,:\, \max\limits_{1\le i\le 6} \min\limits_{j\not=i}|t_i-t_j|\le r \right\}.
$$
It is an easy exercise to check that
$$
V(r)=\mathrm{Vol}(S(r))\le Cr^3\frac{(s_2-s_1)^3}{\eps^3}.
$$
If for some $i\in\{1,\dots,6\}$ it holds $|t_i-t_j|\ge r$ for all $j\not=i$
(without loss of generality $i=1$), then, taking into account ($H^q_{\rm um}$), we have
$$
|E\{c(t_1,y)c(t_2,y)\dots c(t_6,y)\}|=|E(c(t_2,y)\dots c(t_6,y)E\{c(t_1,y)|{\cal F}_{\{t_2,\dots,t_6\}}\})|\le
$$
$$
\le C
E(|c(t_2,y)\dots c(t_6,y)|)(1+r)^{-(3+\delta)}\|c(t_1,y)\|_{L^\infty({\cal A})}
\le (1+r)^{-(3+\delta)}\|c\|^6_{L^\infty({\cal A})}\le
$$
$$
\le
C(1+r)^{-(3+\delta)}.
$$
Therefore,
$$
\eps^3\int\limits_{s_1/\eps}^{s_2/\eps}\dots\int\limits_{s_1/\eps}^{s_2/\eps} E\{c(t_1,y)\dots c(t_6,y)\}dt_1\dots dt_6\le C\eps^3\int\limits_0^{\sqrt{6}(s_2-s_1)/\eps}\frac{dV(r)}{(1+r)^{3+\delta}}\le
$$
$$
C\eps^3\int\limits_0^\infty\frac{dV(r)}{(1+r)^{3+\delta}}\
=C\eps^3\big(V(r) (1+r)^{-(3+\delta)}\big)\big|_0^\infty+(3+\delta)C\eps^3 \int\limits_0^\infty\frac{V(r)dr}{(1+r)^{(4+\delta)}}\le
$$
$$
\le C(s_2-s_1)^3\int\limits_0^\infty\frac{r^3\,dr}{(1+r)^{(4+\delta)}}\le C(s_2-s_1)^3.
$$

Similarly, by ($H\!\hbox{\it \"o}$) and ($H^q_{\rm um}$) one has
$$
E\bigg(\eps^{-1/2}\int\limits_0^s\Big(c\Big(\frac{t}{\eps},y_1\Big)- c\Big(\frac{t}{\eps},y_2\Big) dt\bigg)^6
=
$$
$$
\eps^3\int\limits_0^{s/\eps}\dots \int\limits_0^{s/\eps} E\{(c(t_1,y_1)-c(t_1,y_2))\dots (c(t_6,y_1)-c(t_6,y_2))\}dt_1\dots dt_6\le
$$
$$
C\eps^3|y_1-y_2|^{6\theta}\int\limits_0^{\sqrt{6}T/\eps}\frac{dV_T(r)}{(1+r)^{3+\delta}}.
$$
where $V_T(r)$ stands for the volume of the set
$$
S_T(r)=\left\{(t_1,\dots,t_6)\in \Big[0,\frac{T}{\eps}\Big]^6\,:\, \max\limits_{1\le i\le 6} \min\limits_{j\not=i}|t_i-t_j|\le r \right\}.
$$
Straightforward computations show that
$$
V_T(r)\le Cr^3\frac{T^3}{\eps^3},
$$
This yields
$$
E\bigg(\eps^{-1/2}\int\limits_0^s\Big(c\Big(\frac{t}{\eps},y_1\Big)- c\Big(\frac{t}{\eps},y_2\Big) dt\bigg)^6\le CT^3|y_1-y_2|^{6\theta} \int\limits_0^\infty \frac{r^3dr}{(1+r)^{4+\delta}}
$$
Since $\theta>1/3$, this implies the desired estimate.
\epf

The statement of the last proposition remains valid if we replace assumption ($H\!\hbox{\it \"o}$) with the following one
\begin{equation}\label{ho_mo1}
\|c(s,\cdot)\|_{C^\theta([0,1])}\le C(s,\omega)
\end{equation}
with $C(s,\omega)$ such that
\begin{equation}\label{ho_mo2}
E(|C(\cdot)|^{2(2q(\theta)-1)})<\infty;
\end{equation}
by the stationarity the law of $C(s,\omega)$ does not depend on $s$.
In this case the exponent in ($H_{\rm um}^q$) is to be chosen as follows
\begin{equation}\label{ho_mo3}
\alpha_{\rm um}(r)\le C(1+r)^{-(\tilde q+\delta)},
\qquad\tilde q(\theta)=2q(\theta).
\end{equation}

\begin{proposition}\label{last_CLTbis}
Let assumptions (\ref{ho_mo1})-(\ref{ho_mo3}) be fulfilled. Then the statement of Proposition \ref{last_CLT} holds true.
\end{proposition}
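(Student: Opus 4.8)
The plan is to run the proof of Proposition \ref{last_CLT} essentially unchanged and to modify only the single place where the H\"older regularity of $c$ entered, namely the bound on the \emph{spatial} increments of $W^\eps$. The convergence of the finite dimensional distributions rested solely on the uniform boundedness of $c$ and on the functional central limit theorem for uniformly mixing stationary fields (\cite{Bi}, Chapter 4, \S 20), together with the identification of the covariance $R(x)=\int_\R\Phi(r,x)\,dr$; since (\ref{ho_mo3}) is a \emph{stronger} mixing hypothesis than the one used there, this part carries over verbatim. Likewise, the tightness estimate for the temporal increments used only $\|c\|_\infty<\infty$ and the mixing decay (through Proposition 7.2.6 of \cite{EtKu} with $p=\infty$), so it too remains valid, with room to spare, under (\ref{ho_mo3}). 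Thus everything reduces to re‑establishing, with $q=q(\theta)$,
$$
E\Big(\eps^{-1/2}\int_0^s\big(c(t/\eps,y_1)-c(t/\eps,y_2)\big)\,dt\Big)^{2q}\le C\,T^{q}\,|y_1-y_2|^{2q\theta},
$$
now that the H\"older constant is the \emph{random} variable $C(\cdot)$ of (\ref{ho_mo1}) rather than a deterministic one.

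First I would rescale time and expand the $2q$-th power into a $2q$-fold integral of $E\{\Delta(t_1)\cdots\Delta(t_{2q})\}$, where $\Delta(t):=c(t,y_1)-c(t,y_2)$ satisfies $E\Delta(t)=0$ and, by (\ref{ho_mo1}), $|\Delta(t)|\le C(t,\omega)|y_1-y_2|^\theta$. Exactly as in Proposition \ref{last_CLT} I would organise the domain by the isolation radius $r=\max_i\min_{j\ne i}|t_i-t_j|$, whose sublevel set has volume $V_T(r)\le C r^{q}(T/\eps)^{q}$. On a configuration with isolation $r$ there is an index $i^\ast$ all of whose neighbours lie at distance $\ge r$; the factor $\Delta(t_{i^\ast})$ is measurable with respect to a single time slice whose distance in $\R^2$ to the slices carrying the remaining factors is exactly $r$. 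The essential difference from the deterministic case is that $\Delta(t_{i^\ast})/|y_1-y_2|^\theta$ is no longer bounded, so Proposition 7.2.6 of \cite{EtKu} cannot be applied with $p=\infty$.

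Instead I would invoke the $L^2$ covariance inequality for uniformly mixing fields (a consequence of Proposition 7.2.6 of \cite{EtKu}): since $E\Delta(t_{i^\ast})=0$,
$$
\big|E\{\Delta(t_1)\cdots\Delta(t_{2q})\}\big|\le 2\,\alpha_{\rm um}(r)^{1/2}\,\|\Delta(t_{i^\ast})\|_2\,\Big\|\prod_{j\ne i^\ast}\Delta(t_j)\Big\|_2 .
$$
Here $\|\Delta(t_{i^\ast})\|_2\le|y_1-y_2|^\theta\|C\|_2$, while the generalised H\"older inequality applied to the remaining $2q-1$ factors gives $\big\|\prod_{j\ne i^\ast}\Delta(t_j)\big\|_2\le|y_1-y_2|^{(2q-1)\theta}\|C\|_{2(2q-1)}^{\,2q-1}$, which is finite \emph{precisely} by the moment assumption (\ref{ho_mo2}). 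Hence the integrand is bounded by $C|y_1-y_2|^{2q\theta}(1+r)^{-(\tilde q+\delta)/2}$, and because (\ref{ho_mo3}) fixes $\tilde q=2q$ the surviving decay exponent is $q+\delta/2$. Integrating $(1+r)^{-(q+\delta/2)}$ against $dV_T(r)\sim d(r^q)$ then converges, since $(q+\delta/2)-(q-1)=1+\delta/2>1$, and reproduces the displayed bound. As $2q\theta>2$ and $q>2$, the Kolmogorov tightness criterion quoted in Proposition \ref{last_CLT} gives tightness of $\{W^\eps\}$ in $C(\R_+\times\R)$, completing the proof.

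The main obstacle is exactly this trade‑off between mixing decay and H\"older gain. Extracting the full power $|y_1-y_2|^{2q\theta}$ forces one to measure the isolated factor in $L^2$ rather than $L^\infty$, and the covariance inequality then returns only $\alpha_{\rm um}(r)^{1/2}$ in place of $\alpha_{\rm um}(r)$; this loss of a factor $1/2$ in the exponent is what dictates the doubling $\tilde q=2q$ in (\ref{ho_mo3}), while the splitting of moments into $L^2$ and $L^{2(2q-1)}$ is exactly what (\ref{ho_mo2}) supplies. Verifying that this budget closes, i.e. that the surviving decay $q+\delta/2$ still exceeds the volume growth $q$, is the one genuinely new point; everything else is a transcription of the argument for Proposition \ref{last_CLT}.
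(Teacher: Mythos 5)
Your proof is correct and follows essentially the same route as the paper: the isolation-radius decomposition of the $2q$-fold integral, the $L^2$--$L^2$ mixing covariance inequality (costing the square root of $\alpha_{\rm um}$, hence the doubling $\tilde q=2q$), the H\"older splitting $\|C\|_{2}\cdot\|C\|_{2(2q-1)}^{2q-1}$ matching assumption (\ref{ho_mo2}), and the decay-versus-volume budget $(1+r)^{-(q+\delta/2)}$ against $dV_T(r)\sim d(r^q)$. The only cosmetic differences are that the paper cites Lemma VIII.3.102 of \cite{JaSh} (with $p=q=2$) rather than deriving the inequality from \cite{EtKu}, fixes $q(\theta)=3$ for definiteness, and re-derives the temporal increment bound "in the same way'' where you observe it carries over verbatim from Proposition \ref{last_CLT}.
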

\bpf
Again for definiteness we assume that $\theta>1/3$, other cases can be treated similarly.  Then $q(\theta)=3$, $\tilde q(\theta)=6$ and $2(2q(\theta)-1)=10$. Without loss of generality we may assume that
$$
\max\limits_{2\le j\le 6}|t_1-t_j|= \max\limits_{1\le i,j\le 6}|t_i-t_j.|
$$
If  $\max\limits_{2\le j\le 6}|t_1-t_j|\ge r$, then by Lemma VIII.3.102 in \cite{JaSh} with $p=2$ and $q=2$, for any $y_1$ and $y_2$ such that $|y_1-y_2|\le 1$, it holds
$$
|E\{(c(t_1,y_1)-c(t_1,y_2))\dots (c(t_6,y_1)-c(t_6,y_2))\}|\le
$$
$$
\le \frac{C}{(1+r)^{(3+\delta/2)}}\|c(t_1,y_1)-c(t_1,y_2)\|_{L^2({\cal S})}\,
\|(c(t_2,y_1)-c(t_2,y_2))\dots(c(t_6,y_1)-c(t_6,y_2))\|_{L^2({\cal S})}
$$
$$
\le \frac{C|y_1-y_2|^{6\theta}}{(1+r)^{(3+\delta/2)}}
\|C(t_1))\|_{L^2(\Omega)}\prod\limits_{j=2}^6\|C(t_j))\|_{L^{10}(\Omega)}
\le C\frac{|y_1-y_2|^{6\theta}}{(1+r)^{(3+\delta/2)}};
$$
the H\"older inequality and the stationarity of $C(s)$ has also been used here.
This yields
$$
E\bigg(\eps^{-1/2}\int\limits_0^s\Big(c\Big(\frac{t}{\eps},y_1\Big)- c\Big(\frac{t}{\eps},y_2\Big) dt\bigg)^6
=
$$
$$
\eps^3\int\limits_0^{s/\eps}\dots \int\limits_0^{s/\eps} E\{(c(t_1,y_1)-c(t_1,y_2))\dots (c(t_6,y_1)-c(t_6,y_2))\}dt_1\dots dt_6\le
$$
$$
C|y_1-y_2|^{6\theta}\int\limits_0^\infty\frac{r^3\,dr} {(1+r)^{4+(\delta/2)}}\le C|y_1-y_2|^{6\theta}.
$$
The estimate
$$
E\bigg(\eps^{-1/2}\int\limits_{s_1}^{s_2}c\Big(\frac{t}{\eps},y_1\Big) dt\bigg)^6\le C|s_2-s_1|^3
$$
can be proved in the same way, and the desired statement follows.
\epf


As we shall see below, the exponent in the Feynman--Kac formula converges towards
\[
\int_0^t W(ds,x+B_s)=\int_0^t\int_\R W(ds,y)L(s,y-x)dy,
\]
where again $L(t,z)$ stands for the local time of the process $B$ at time $t$ and location $z$.

Let us note that the left hand--side of the last identity can be defined without any reference to local time. Recall that $W$ and $B$ are independent, hence it
suffices to define the stochastic integral
\[
\int_0^t W(ds,f(s)),\quad t\ge 0,
\]
with $f\in C(\R_+)$.
\begin{proposition}
To any $f\in C(\R_+)$, we associate the
 continuous centered Gaussian process
\[
\{Y_t:=\int_0^t W(ds,f(s)),\: t\ge 0\}
\]
with the covariance function $(t\wedge t')\,R(0)$, which is, for
each $t>0$, the limit in probability  as $n\to\infty$ of the
sequence
\[
Y^n_t:=\sum_{k=1}^{[t2^n]}\left[W(k2^{-n},f(k2^{-n}))-W((k-1)2^{-n},f(k2^{-n}))\right].
\]
\end{proposition}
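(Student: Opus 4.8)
The plan is to show that for each fixed $t$ the sequence $\{Y^n_t\}_n$ is Cauchy in $L^2(P)$, to identify the limiting covariance, and finally to produce a continuous modification. The structural input I would record first is the covariance of the time--increments of $W$. From $E(W(t,x)W(t',x'))=(t\wedge t')R(x-x')$ one computes, for $s\le t$ and $s'\le t'$,
\[
E\big[(W(t,x)-W(s,x))(W(t',x')-W(s',x'))\big]=\big|[s,t]\cap[s',t']\big|\,R(x-x'),
\]
because $(t\wedge t')-(t\wedge s')-(s\wedge t')+(s\wedge s')$ is precisely the Lebesgue measure of the intersection of the two time intervals. In particular, increments of $W$ over disjoint time intervals are uncorrelated, hence, being jointly Gaussian, independent, whatever the spatial locations at which they are taken. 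Consequently each $Y^n_t$ is a finite sum of independent centered Gaussian variables, so it is itself centered Gaussian, and any finite family $(Y^n_{t_1},\dots,Y^n_{t_k})$ is jointly Gaussian.

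Writing $x^n_k=f(k2^{-n})$ and $I^n_k=[(k-1)2^{-n},k2^{-n}]$, the vanishing of the off--diagonal covariances gives at once
\[
E\big[(Y^n_t)^2\big]=\sum_{k=1}^{[t2^n]}|I^n_k|\,R(0)=[t2^n]2^{-n}R(0)\longrightarrow tR(0).
\]
The substance of the proof is the cross term $E[Y^n_tY^m_t]$ for $m\ge n$. Since the dyadic grids are nested, every $I^m_j$ is contained in a unique $I^n_{k(j)}$, and the increment--covariance formula annihilates all pairs over disjoint time intervals, leaving
\[
E\big[Y^n_tY^m_t\big]=\sum_{j:\,I^m_j\subset[0,[t2^n]2^{-n}]}2^{-m}\,R\big(x^n_{k(j)}-x^m_j\big).
\]
Both $k(j)2^{-n}$ and $j2^{-m}$ lie in an interval of length $2^{-n}$, so $x^n_{k(j)}-x^m_j=f(k(j)2^{-n})-f(j2^{-m})\to0$ uniformly in $j$ as $n\to\infty$, by the uniform continuity of $f$ on $[0,t]$. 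Using the continuity of $R$ at the origin (a consequence of the assumed Hölder regularity of $c(s,\cdot)$ together with the domination in Corollary \ref{corEK} and dominated convergence), and the fact that the total time measure of the surviving increments is $[t2^n]2^{-n}\to t$, I get $E[Y^n_tY^m_t]\to tR(0)$. Combining the three limits yields $E[(Y^n_t-Y^m_t)^2]\to0$ as $n,m\to\infty$, so $Y^n_t$ converges in $L^2(P)$, and a fortiori in $P$--probability, to a limit $Y_t$.

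It remains to identify the law of the limit and to regularize it. As an $L^2$--limit of jointly Gaussian vectors, $\{Y_t\}$ is a centered Gaussian process, and the same computation with a single grid gives $E[Y_tY_{t'}]=\lim_n E[Y^n_tY^n_{t'}]=(t\wedge t')R(0)$. Thus $\{Y_t\}$ has the covariance of a Brownian motion scaled by $\sqrt{R(0)}$; in particular $E[(Y_t-Y_s)^2]=|t-s|R(0)$, whence by Gaussianity $E[|Y_t-Y_s|^4]=3R(0)^2|t-s|^2$, and Kolmogorov's continuity criterion provides the continuous modification asserted in the statement. The main obstacle is the cross--term limit $E[Y^n_tY^m_t]\to tR(0)$: this is where one must combine the nesting of the two dyadic partitions with the uniform continuity of $f$ and the continuity of $R$ at $0$, and verify that the increments of $Y^m$ over time intervals disjoint from all $I^n_k$ contribute nothing while the surviving ones carry total time measure tending to $t$.
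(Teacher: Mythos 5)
Your proof is correct and follows essentially the same route as the paper's: convergence of the mixed second moments $E(Y^n_tY^m_t)$ to $tR(0)$ via the nested dyadic grids, the vanishing of covariances of increments over disjoint time intervals, and the (uniform) continuity of $f$, which yields the $L^2(P)$ Cauchy property and the Gaussian limit with covariance $(t\wedge t')R(0)$. You merely make explicit some points the paper leaves implicit, namely the continuity of $R$ at $0$ and the Kolmogorov criterion for the continuous modification.
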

\bpf  The fact that $\{Y^n_t,\: n\ge1\}$ is a Cauchy sequence in $L^2(P)$
will follow from the fact that $E(Y^n_tY^m_t)$ converges to a finite limit as
$n$ and $m$ tend to infinity. This is indeed the case, since for $n>m$,
\begin{align*}
E(Y^n_tY^m_t)&=[t2^{-n}]2^n\sum_{\ell=1}^{[t2^m]}\sum_{k=(\ell-1)2^{n-m}}^{\ell2^{n-m}}
R(f(k2^{-n})-f(\ell2^{-m}))\\
&\to t R(0),
\end{align*}
as $n$ and $m$ tend to infinity, with $n>m$. The fact that $Y_t$ is Gaussian and centered
follows easily, as well as the formula for the covariance.
\epf

Note that the conditional law of $Y_t=\int_0^tW(ds,x+B_s)$, given $\{B_s,\: 0\le s\le t\}$
is the law $N(0,tR(0))$. It does not depend on the realization of
$\{B_s,\: 0\le s\le t\}$, in agreement with Proposition \ref{prop-tcl}. However, $Y_t$ does depend on
$\{B_s,\: 0\le s\le t\}$.
This follows in particular from the fact that if $B$ and $B'$ are two trajectories of the
Brownian motion,
\[
E\left[\int_0^tW(ds,B_s)\int_0^{t'}W(ds,B'_s)\right]=
\int_0^{t\wedge t'}R(B_s-B'_s)ds.
\]

The uniform integrability here is easy to establish. Indeed, we saw in the previous section that it is sufficient to prove  that the collection
of r. v.
$$\left\{\E\exp(2Y^{x,\e}_t),\ \e>0\right\}$$
is $P$--tight. Since those are non--negative random variables, a
sufficient condition is that
$$\sup_{\e>0}E\E\exp(2Y^{x,\e}_t)<\infty,$$
and we can very well interchange the order of expectation.
Now Lemma \ref{l_uni_inte} above,  in the case
$\nu=2$, implies that
$$\sup_{\e>0}E\left(\exp\left[\frac{1}{\sqrt{\e}}\int_0^tc\left(\frac{s}{\eps},x+B_s\right)ds\right]\right)\le C,$$
where $C$ is a finite constant. This is easily seen by making the following change of variable~:
$$\frac{1}{\sqrt{\e}}\int_0^tc\left(\frac{s}{\eps},x+B_s\right)ds=
\eta\int_0^{t/\eta^2}c(r,x+B_{\eps^2r})dr, $$
with $\eta=\sqrt{\eps}$.

We can finally establish the
\begin{theorem}
Under assumptions ($H\!\hbox{\it \"o}$) and ($H_{\rm um}^{q(\theta)}$) for each $(t,x)\in\R_+\times\R$,
$$u^\e(t,x)\to u(t,x):=\E\left[g(X^x_t)\exp\left(\int_0^t\int_\R W(ds,y)L(s,y-x)dy\right)\right]$$
in $P$--law, as $\e\to0$.
\end{theorem}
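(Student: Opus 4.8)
The plan is to deduce the statement directly from Corollary~\ref{cor_crit}. I take for the Banach space $\X=C(\R_+\times\R)$ endowed with the topology of uniform convergence on compact sets, for the family of $\X$--valued random variables on $(\Ss,\A,P)$ the fields $W^\eps$, and for the functional
$$
\Psi(\omega,w)=g(x+B_t(\omega))\exp\Big(\int_0^t w(ds,x+B_s(\omega))\Big),\qquad w\in\X,\ \omega\in\Omega .
$$
Since $Y^{x,\eps}_t=\int_0^tW^\eps(ds,x+B_s)$, the Feynman--Kac formula gives $\E\Psi(\cdot,W^\eps)=u^\eps(t,x)$, while $\E\Psi(\cdot,W)=u(t,x)$ once the exponent $\int_0^tW(ds,x+B_s)$ is identified, through the local time representation, with $\int_0^t\int_\R W(ds,y)L(s,y-x)dy$. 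The weak convergence $W^\eps\Rightarrow W$ in $\X$ required by the corollary is exactly Proposition~\ref{last_CLT} (or Proposition~\ref{last_CLTbis}), so the whole theorem reduces to checking the three hypotheses (i)--(iii).

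Hypothesis (iii) is essentially already available. Following the Remark after Corollary~\ref{cor_crit} I would fix $\delta=1/3$ and use the H\"older inequality displayed there to reduce the tightness of $\{\E|\Psi|^{4/3}(\cdot,W^\eps)\}$ to the $P$--tightness of $\{\E\exp(4Y^{x,\eps}_t),\ \eps>0\}$. As these variables are non--negative, it suffices to bound $\sup_\eps E\,\E\exp(4Y^{x,\eps}_t)$; interchanging the two expectations by Tonelli and invoking Lemma~\ref{l_uni_inte} with $\nu=2$ (after the change of variable recorded just before the statement) furnishes a finite, $\eps$--independent bound, whence tightness by Markov's inequality. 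Hypothesis (ii), the $\F$--measurability of $\omega\mapsto\Psi(\omega,w)$ for fixed $w$, is routine.

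The real difficulty is hypothesis (i): the continuity, in $\P$--probability, of $w\mapsto\Psi(\omega,w)$. It is genuinely delicate because the exponent $\int_0^t w(ds,x+B_s)$ is a stochastic integral in the time variable, and such integrals are not continuous functionals of $w$ for the uniform topology. I would handle it in the spirit of the proof of Theorem~\ref{conv-Y}, exploiting the independence of $W$ and $B$: conditionally on the Brownian path, $\int_0^t w(ds,x+B_s)$ is precisely the Wiener integral along the curve $s\mapsto x+B_s$ constructed in the Proposition preceding the statement, and its conditional $L^2(\P)$ norm is controlled by the supremum of $|w|$ over the compact set swept by the path up to time $t$. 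This control, obtained after an integration by parts in time that removes the time derivative from $w$, yields the continuity of $w\mapsto\int_0^tw(ds,x+B_s)$ as a map into $L^0(\Omega,\P)$ with the topology of convergence in probability; composing with the bounded-on-compacts map $\exp$ and multiplying by $g(x+B_t)$ preserves it. For $w=W^\eps$ the construction coincides with $\tfrac{1}{\sqrt\eps}\int_0^tc(s/\eps,x+B_s)ds$, as it must.

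With (i)--(iii) verified and $W^\eps\Rightarrow W$ in hand, Corollary~\ref{cor_crit} delivers $\E\Psi(\cdot,W^\eps)\to\E\Psi(\cdot,W)$ in $P$--law, that is $u^\eps(t,x)\to u(t,x)$ in $P$--law. I expect step (i) to be the principal obstacle: contrary to the case $\alpha=0$ treated in Section~3, where oscillation and Brownian motion both act in the space variable and the spatial antiderivative $\W_\eps$ regularizes the integrand via It\^o's formula, here the potential oscillates in time while $B$ moves in space, so no such smoothing is available and the continuity must be extracted from the independence of $W$ and $B$ together with the $L^2$--construction of the stochastic integral along the Brownian curve.
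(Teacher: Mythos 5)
Your overall skeleton is the paper's own: Corollary \ref{cor_crit} with $\X=C(\R_+\times\R)$, the convergence $W^\eps\Rightarrow W$ from Proposition \ref{last_CLT}, and uniform integrability via the Remark after the Corollary, Tonelli, and Lemma \ref{l_uni_inte} with $\nu=2$, exactly as you describe. But your treatment of hypothesis (i) has a genuine defect, and it infects the very definition of your functional. You set $\Psi(\omega,w)=g(x+B_t)\exp\bigl(\int_0^t w(ds,x+B_s)\bigr)$ for $w\in\X$; for a generic continuous $w$ this exponent has no meaning: $s\mapsto w(s,x+B_s(\omega))$ is merely continuous, so there is no pathwise Stieltjes integral, and the $L^2(P)$ construction of $\int_0^tW(ds,f(s))$ in the Proposition preceding the theorem is tied to the Gaussian field $W$ (it uses the covariance $(t\wedge t')R(x-x')$), not to arbitrary elements of $\X$. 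Hence hypotheses (i) and (ii) of the Corollary cannot even be formulated for your $\Psi$. Moreover, the justification you sketch for (i) is incorrect as stated: conditionally on $B$, the variance of $\int_0^tW(ds,x+B_s)$ is $tR(0)$, a constant which has nothing to do with the supremum of the field over the compact set swept by the path, so no bound of a ``conditional $L^2(\P)$ norm by $\sup|w|$'' is available for the stochastic-integral formulation.

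The repair is precisely the ingredient you mention only in passing (``after an integration by parts in time''), but it must come first, as the \emph{definition} of the functional. The paper sets
$$\Psi_{t,x}(\varphi):=\E\left[g(X^x_t)\exp\left(\int_\R \varphi(t,y)L(t,y-x)\,dy-\int_0^t\int_\R \varphi(s,y)L(ds,y-x)\,dy\right)\right],$$
which makes sense for every $\varphi\in\X$ because, given $B$, the measures involved have compact support in $y$ and total mass $t$ (recall $\int_\R L(t,y)\,dy=t$); the exponent is then pathwise Lipschitz in $\sup_K|\varphi|$ over the compact set $K$ swept by the path, so $\Psi_{t,x}$ is continuous with no Gaussian or $L^2$ structure needed. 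One then checks that $\Psi_{t,x}(W^\eps)=u^\eps(t,x)$ (a legitimate integration by parts, since $W^\eps$ is absolutely continuous in $t$) and that $\Psi_{t,x}(W)$ coincides, via the $L^2$ construction, with the limit written in the theorem; with these identifications your application of the Corollary goes through and gives the stated convergence in $P$--law.
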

\bpf
Note that
\begin{align*}
Y^{x,\e}_t&=\int_0^t\int_\R W^\e(ds,y)L(s,y-x)dy\\
&=\int_\R W^\e(t,y)L(t,y-x)dy-\int_0^t\int_\R W^\e(s,y)L(ds,y-x)dy
\end{align*}
Define the functional $\Psi_{t,x} : [0,t]\times\R\to \R$ as
$$\Psi_{t,x}(\varphi):=\E\left[g(X^x_t)\exp\left(\int_\R \varphi(t,y)L(t,y-x)dy-\int_0^t\int_\R \varphi(s,y)L(ds,y-x)
dy\right)\right].$$
All we have to show is that
$$u^\e(t,x)=\Psi_{t,x}(W^\e)\to\Psi_{t,x}(W)$$
in $P$--law, which follows from Proposition \ref{last_CLT}
and uniform integrability, since $\Psi_{t,x}$ is continuous.
\epf

The corresponding limiting SPDE reads (in Stratonovich form)
\begin{equation*}
\left\{
\begin{aligned}
d u(t,x)&
=\frac{1}{2}\frac{\partial^2 u}{\partial x^2}(t,x)dt
+u(t,x)\circ W(dt,x),
\quad t\ge0,\, x\in\R;\\
u(0,x)&=g(x),\quad x\in\R.
\end{aligned}
\right.
\end{equation*}
We can rewrite this SPDE in It\^o form as follows
\begin{equation*}
\left\{
\begin{aligned}
d u(t,x)&
=\frac{1}{2}\frac{\partial^2 u}{\partial x^2}(t,x)dt+\frac{1}{2}u(t,x)R(0)dt
+u(t,x) W(dt,x),
\quad t\ge0,\, x\in\R;\\
u(0,x)&=g(x),\quad x\in\R.
\end{aligned}
\right.
\end{equation*}

\end{document}